\newcounter{daggerfootnote}
\newtheorem{prop}{Proposition}[subsection]
\newtheorem{lemma}[prop]{Lemma}
\newtheorem{cor}[prop]{Corollary}
\newtheorem{teo}[prop]{Theorem}
\theoremstyle{definition}
\newtheorem{Def}[prop]{Definition}
\newtheorem*{Def*}{Definition}
\newtheorem*{prop*}{Proposition}
\newtheorem*{lema*}{Lemma}
\newtheorem{obs}[prop]{Remark}
\newtheorem*{obs*}{Remark}
\newtheorem*{cor*}{Corollary}
\newtheorem*{teo*}{Theorem}
\newtheorem*{es*}{Example}
\newtheorem*{main-teo}{Main Theorem}
\DeclareMathOperator{\Ker}{ker}
\DeclareMathOperator{\Hom}{Hom}
\DeclareMathOperator{\GL}{GL}
\DeclareMathOperator{\Sing}{Sing}
\DeclareMathOperator{\SL}{SL}
\DeclareMathOperator{\codim}{codim}
\DeclareMathOperator{\Gr}{Gr}
\DeclareMathOperator{\expdim}{expdim}
\DeclareMathOperator{\Terr}{Terr}
\DeclareMathOperator{\diam}{diam}
\definecolor{coloreteorema}{HTML}{E8EDFA}
    {\endMakeFramed}
\numberwithin{equation}{subsection}
\title{Identifiability and singular locus\\ of secant varieties to Grassmannians}
\author{Vincenzo Galgano\footnote{Dipartimento di Matematica, Università di Trento, Via Sommarive 14 38123 Trento, Italy; ORCID: 0000-0001-8778-575X (vincenzo.galgano@unitn.it)} , Reynaldo Staffolani\footnote{ORCID: 0000-0001-9889-5356 (reynaldo.staffolani@gmail.com)}}
\date{}
\begin{document}
	
	\maketitle
	
\begin{abstract}
	\noindent Secant varieties are among the main protagonists in tensor decomposition, whose study involves both pure and applied mathematic areas. Grassmannians are the building blocks for skewsymmetric tensors. Although they are ubiquitous in the literature, the geometry of their secant varieties is not completely understood. In this work we determine the singular locus of the secant variety of lines to a Grassmannian $Gr(k,V)$ using its structure as $\SL(V)$-variety. We solve the problems of identifiability and tangential-identifiability of points in the secant variety: as a consequence, we also determine the second Terracini locus to a Grassmannian. \\
	\hfill\break
	{\bf Keywords:} singular locus, secant variety, Grassmannian, homogeneous spaces, identifiability, tangential-identifiability, Terracini locus. \\
	\hfill\break
	{\bf Mathematics Subject Classification (2020):} 14M15, 14M17, 14N07.	
	\end{abstract}
	
	\tableofcontents

\section*{Acknowledgement}
The authors warmly thank Alessandra Bernardi and Giorgio Ottaviani for the several and helpful discussions. Sincere thanks are due also to Laurent Manivel and Mateusz Micha\l ek for their useful suggestions, as well as to the anonymous referees for the interesting and detailed remarks which allowed us to improve our work. The authors are members of the Italian GNSAGA-INdAM.

\section*{Introduction}
\indent \indent Let $X \subset \mathbb{P}^N$ be a non-degenerate irreducible variety defined over an algebraically closed field $\mathbb{K}$ of characteristic $0$. Given a point $p \in \mathbb{P}^N$, its {\it $X$-rank} is the minimum number of points $p_1,\dots,p_r$ of $X$ such that $p \in \langle p_1,\dots,p_r \rangle$. The {\it $r$-th secant variety} of $X$, which we denote with $\sigma_r(X)$, is the Zariski closure of the union of all the points of $X$-rank at most equal to $r$. Such varieties are classically known and they are very versatile objects due to their deep connection with the {\it decomposition of tensors}. We refer to \cite{landsberg, bernardi2018hitchhiker, ottaviani2020tensor} for a general overview on secant varieties and tensors, and \cite{bernardi2012algebraic, comon2014tensors, lim2021tensors} for applications of the study of secant varieties and tensors. We also refer to \cite{bernardi2018hitchhiker} for a state of the art on the dimension of such varieties, and to \cite{galuppi2022secant,baur2007secant,laface2013secant} for some more detailed results. \\
\indent A particular aspect which is still unknown in general is the {\it singular locus} of such secant varieties. Classically, if $\sigma_r(X)$ is not a linear space, it is known that $\sigma_{r-1}(X) \subset \Sing(\sigma_r(X))$ but only in few cases $\Sing(\sigma_r(X))$ is actually determined.  For instance, the case $r=2$ for Segre varieties is solved in \cite{michalek2015secant}; for Veronese varieties \cite{kanev1999chordal} and \cite{kangjin2018} solve the cases $r=2$ and $r=3$ respectively, while partial results for higher cases $r\geq 4$ are obtained in \cite{kangjin2021}. The aim of this paper is to determine the singular locus of the $2$-nd secant variety (i.e. the secant variety of lines) to a Grassmannian $\Gr(k,V) \subset \mathbb{P}\left(\bigwedge^k V\right)$, the latter being the projective variety of $k$-dimensional linear subspaces of a vector space $V$. Partial results on this topic were obtained in \cite{manivelmichalek}. From now on we simply write {\em secant variety} by omitting \textquotedblleft$2$-nd\textquotedblright. Moreover, all along the paper we refer to the $X$-rank simply as the {\em skew-symmetric rank}, and from the duality of the Grassmannians we assume that $k \leq \frac{N}{2}$. Since the case $k=2$ (i.e. skew-symmetric matrices) is well-known and it is the only defective case for $\sigma_2(\Gr(k, V))$, we treat it separately (see Section \ref{sec:defective}) and in the following we also assume $k\geq3$.   \\
\hfill\break
\indent The main idea, which will be described more in detail along the paper, is to use the natural action of $\SL(V)$ on $\bigwedge^k V$. Indeed, the variety $\sigma_2(\Gr(k,V))$ can be written as a union of finitely many $\SL(V)$-orbits. Since $\Sing(\sigma_2(\Gr(k,V)))$ is stable under the action of $\SL(V)$, we must have that the singular locus is given by the union of some of these orbits contained in $\sigma_2(\Gr(k,V))$. In particular, one can check the singularity or smoothness of an orbit by simply checking it for a representative.\\
\indent In this respect, we use the following notion of distance (see Definition \ref{def:hamming distance} - cf.  \cite{catalisano2005secant,baur2007secant,abo2009non}): given $X \subset \mathbb P^n$ a projective variety and two points $p,q\in X$, the {\em Hamming distance} between $p$ and $q$ is the minimum number of lines of $\mathbb P^n$ contained in $X$ and connecting $p$ to $q$. This notion allows to distinguish the $\SL(V)$-orbits in the secant variety. \\
In Theorem \ref{thm:orbit stratification} we prove the following orbit stratification of the secant variety $\sigma_2(\Gr(k,V))$, as $l$ runs among the possible Hamming distances between points in $\Gr(k,V)$: in particular, the orbits  $\Sigma_{l,\Gr(k,V)}$ (defined in \eqref{def secant orbit}) lie in the open set $\sigma_2^\circ(\Gr(k,V))$, while the orbits $\Theta_{l,\Gr(k,V)}$ (defined in \eqref{def tangent orbit}), lie in the tangential variety $\tau(\Gr(k,V))$. The arrows denote the inclusion of an orbit into the closure of the other one.
	\begin{center}
		{\small \begin{tikzpicture}[scale=3]
		
		\node(S) at (0,0.2){{$\Gr(k,V)$}};
		\node(t2) at (0,0.6){{$\Theta_{2,\Gr(k,V)} = \Sigma_{2,\Gr(k,V)}$}};
		\node(t3) at (-0.6,1){{$\Theta_{3,\Gr(k,V)}$}};
		\node(t) at (-0.6,1.4){{$\vdots$}};
		\node(td) at (-0.6,1.8){$\Theta_{k,\Gr(k,V)}$};
		
		\node(s3) at (0.6,1.2){{$\Sigma_{3,\Gr(k,V)}$}};
		\node(s) at (0.6,1.6){{$\vdots$}};
		\node(sd) at (0.6,2){{$\Sigma_{k,\Gr(k,V)}$}};
		
		\path[font=\scriptsize,>= angle 90]
		(S) edge [->] node [left] {} (t2)
		(t2) edge [->] node [left] {} (t3)
		(t3) edge [->] node [left] {} (s3)
		(t3) edge [->] node [left] {} (t)
		(t) edge [->] node [left] {} (td)
		(t) edge [->] node [left] {} (s)
		(td) edge [->] node [left] {} (sd)
		(s3) edge [->] node [left] {} (s)
		(s) edge [->] node [left] {} (sd);
		\end{tikzpicture}}
	\end{center}
 We completely determine such orbits, by exhibiting representatives and inclusions of their closures, and computing their dimensions (see Proposition \ref{prop:secant dimensions}, Proposition \ref{prop:tangent dimensions}). For what concerns the computation of the orbit dimensions in $\sigma_2(\Gr(k,V))$, we point out the paper \cite{BidlemanOeding}, that has appeared only recently and in which the authors compute the dimensions of the restricted chordal varieties $\sigma_2^l(\Gr(k,V))$ (corresponding to the closures of our {\em secant orbits} $\Sigma_{l,\Gr(k,V)}$) using similar techniques to the ones we have used.\\
\indent 
We also solve the problems of {\em identifiability} and {\em tangential-identifiability} (see Definition \ref{def:cactus}) for points in $\sigma_2(\Gr(k,V))$. More precisely, in the above notations, we prove the following result in Theorem \ref{thm:identifiability for secant orbits} and Theorem \ref{thm:tangential-identifiability for tangent orbits}. 
\begin{teo*}
	The orbits $\Sigma_{l,\Gr(k,V)}$ for $l\geq 3$ are the only ones in $\sigma_2(\Gr(k,V))$ being identifiable. The orbits $\Theta_{l,\Gr(k,V)}$ for $l\geq 3$ are the only ones in $\tau(\Gr(k,V))$ being tangential-identifiable.
	\end{teo*}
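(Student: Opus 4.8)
The plan is to exploit that both notions are $\SL(V)$-invariant and that, by Theorem~\ref{thm:orbit stratification}, the orbits listed there exhaust the ones occurring in $\sigma_2(\Gr(k,V))$ and in $\tau(\Gr(k,V))$, so it suffices to test one representative of each. I would fix $\omega=a\wedge(m_1+m_2)$ as representative of $\Sigma_{l,\Gr(k,V)}$ and $\omega=a\wedge(\mu+\nu)$ of $\Theta_{l,\Gr(k,V)}$, where $a$ generates $\bigwedge^{k-l}L$ with $L$ the common $(k-l)$-plane, $m_1,m_2$ generate $\bigwedge^l$ of two complementary $l$-planes, and $\mu+\nu$ is a full-rank-$l$ tangent vector of $\Gr(l,M\oplus M')$ at $[\mu]$ inside a $2l$-dimensional space $M\oplus M'$ disjoint from $L$; in both cases I set $W:=\operatorname{span}(\omega)$, which is a $(k+l)$-plane.

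The ``only'' assertions I would handle first, and quickly. A point on $\Gr(k,V)$ lies on infinitely many lines of $\Gr(k,V)$ through it, hence on infinitely many length-$\le 2$ subschemes and infinitely many tangent lines, so this orbit is neither identifiable nor tangentially identifiable. For $\Sigma_{2,\Gr(k,V)}=\Theta_{2,\Gr(k,V)}$ the factor $m_1+m_2$ is a general point of $\sigma_2(\Gr(2,4))=\tau(\Gr(2,4))=\mathbb P^5$, which lies on infinite families of secant and of tangent lines of the quadric $\Gr(2,4)$; wedging with $a$ turns each of these into a distinct secant, resp.\ tangent, line of $\Gr(k,V)$ through $[\omega]$. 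Finally a point of $\Theta_{l,\Gr(k,V)}$ with $l\ge 3$ lies in $\tau(\Gr(k,V))$ but in no orbit of rank-$2$ points, so it has $\Gr(k,V)$-rank $>2$ and admits no decomposition into two points of $\Gr(k,V)$: it is not identifiable.

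For the positive part I would argue through intrinsic invariants of $\omega$. A contraction computation shows that $\operatorname{span}(\omega)=W$ and $\{v\in V:v\wedge\omega=0\}=L$ are determined by $\omega$. Hence in any other decomposition $\omega=\omega_1'+\omega_2'$ with $\omega_i'$ decomposable spanning $U_i'$ one gets (for a non-decomposable sum of two decomposables there is no cancellation, so $\operatorname{span}(\omega_1'+\omega_2')=U_1'+U_2'$) that $U_1'+U_2'=W$ and $U_1'\cap U_2'=L$; and in any tangency relation $\omega\in\widehat{\mathbb T}_{[\omega_1']}\Gr(k,V)=\bigwedge^{k-1}U_1'\wedge V$ one gets $L\subseteq U_1'$, because $\omega$ is divisible by $a$ but is not decomposable (if some $v\in L\setminus U_1'$, splitting off the $v$-component inside $\bigwedge^{k-1}U_1'\wedge V$ and using $v\wedge\omega=0$ would force $\omega$ decomposable). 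Dividing by $a$ and pushing forward to $\bigwedge^l(V/L)$ then reduces both statements to the case of $\Gr(l,2l)$: that its generic secant point is identifiable and its generic tangent vector is tangentially identifiable, for $l\ge 3$. The first I would obtain from the fact that $\operatorname{im}\!\big((\mathbb K^{2l})^{*}\xrightarrow{\,\lrcorner\eta\,}\bigwedge^{l-1}\mathbb K^{2l}\big)=\bigwedge^{l-1}B_1\oplus\bigwedge^{l-1}B_2$ is intrinsic to $\eta=b_1+b_2$ (with $B_i=\operatorname{span}b_i$ and $B_1\oplus B_2=\mathbb K^{2l}$), together with a short Plücker-relations argument showing that, for $l\ge 3$, $\Gr(l-1,2l)$ meets $\mathbb P\!\big(\bigwedge^{l-1}B_1\oplus\bigwedge^{l-1}B_2\big)$ in exactly the two disjoint linear spaces $\mathbb P(\bigwedge^{l-1}B_i)$, which recovers $\{B_1,B_2\}$. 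The second I would get by determining directly all $(l-1)$-planes that can divide $\mu+\nu$ once the tangency relations are imposed (equivalently, over $\mathbb C$, by computing the $\GL_{2l}$-stabilizer of $\mu+\nu$ and checking it is connected); here $l\ge 3$ is what keeps $\bigwedge^{l-1}M$ and $\bigwedge^{l-1}M'$ in different graded pieces, so that no cancellation can produce a second tangent plane. Lifting back one finds $\{U_1',U_2'\}=\{U_1,U_2\}$, resp.\ $U_1'=L\oplus M$, as required.

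The main obstacle is exactly these two statements about $\Gr(l,2l)$ with $l\ge 3$: to control \emph{all} decompositions, resp.\ \emph{all} tangent planes, of the generic element rather than just the obvious one. That both fail at $l=2$ --- where $\Gr(2,4)$ is a smooth quadric whose secant and tangential varieties both fill $\mathbb P^5$ --- shows that the hypothesis $l\ge 3$ has to be used in an essential way, and indicates where the delicate part of the argument lies.
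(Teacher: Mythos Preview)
Your overall strategy matches the paper's: exploit $\SL(V)$-invariance to reduce to representatives, recover the intrinsic data $L=\{v:v\wedge\omega=0\}$ and $W=\operatorname{span}(\omega)$, and reduce to a dense orbit. For the secant part your argument is correct and genuinely different in its key step: where the paper (Lemma~\ref{lemma:generic identifiability}) studies the kernel of $(p+q)\wedge\bullet:\bigwedge^2V\to\bigwedge^{k+2}V$ and argues directly that the splitting $\{H_p,H_q\}$ is forced, you instead look at the image of $\eta\lrcorner:V^{*}\to\bigwedge^{l-1}V$, identify it with $\bigwedge^{l-1}B_1\oplus\bigwedge^{l-1}B_2$, and recover $\{B_1,B_2\}$ by intersecting with $\Gr(l-1,2l)$. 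Both work; your version has the pleasant feature that the failure at $l=2$ is visibly the fact that $\Gr(1,4)=\mathbb P^3$ fills the linear span, while for $l\geq 3$ a short argument (if $\alpha\in\bigwedge^{l-1}B_1$, $\beta\in\bigwedge^{l-1}B_2$ are both nonzero decomposables with $\alpha+\beta$ decomposable, they would have to share an $(l-2)$-plane, impossible since $B_1\cap B_2=0$) gives exactly the two linear components.

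For the tangential part, however, there is a real gap. Your divisibility argument correctly yields $L\subseteq U_1'$ (indeed, if $v\in L\setminus U_1'$ then $\omega=\gamma\wedge v$ with $\gamma\in\bigwedge^{k-1}U_1'$, and every element of $\bigwedge^{k-1}$ of a $k$-space is decomposable, contradiction). But the subsequent reduction ``to $\Gr(l,2l)$'' requires also $U_1'\subseteq W$, and this you have not shown: after quotienting by $L$ you land in $\Gr(l,V/L)$ with $\dim(V/L)=N-k+l$, not $2l$, and nothing so far prevents the alternative tangent plane $U_1'/L$ from sticking out of $W/L$. Your proposed endgame (stabilizer in $\GL_{2l}$, or counting $(l-1)$-planes dividing $\mu+\nu$ inside $\mathbb C^{2l}$) then does not apply. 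The paper sidesteps this entirely: in Theorem~\ref{thm:tangential-identifiability for tangent orbits} it computes directly
\[
T_\omega\Gr(k,V)\cap T_q\Gr(k,V)=\bigwedge^{k-1}(H_\omega\cap H_q)\wedge V \;+\; \bigwedge^{k-2}(H_\omega\cap H_q)\wedge H_\omega\wedge H_q,
\]
which vanishes once $d(\omega,q)\geq 3$ and is handled by a short case analysis for $d(\omega,q)\leq 2$. This avoids any reduction step and is where the paper's route is cleaner. To salvage your approach you would need either an independent argument that $U_1'\subseteq W$, or to prove dense tangential-identifiability for $\Gr(l,V')$ with $\dim V'$ arbitrary rather than just $2l$.
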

\noindent As a consequence, in Theorem \ref{thm:terracini locus} we prove that the second Terracini locus (introduced in \cite{ballico2020terracini,ballico2021terracini}) of the Grassmannian $\Gr(k,V)$ corresponds to the distance-$2$ orbit closure $\overline{\Sigma_{2,\Gr(k,V)}}$. The main result of this work is the following (see Theorem \ref{thm:singular locus}).
\begin{main-teo}
	For any $3 \leq k \leq \frac{\dim V}{2}$ and $\dim V\geq 7$, the singular locus of the secant variety $\sigma_2(\Gr(k,V))$ is the closure of the distance-$2$ orbit: \[\Sing(\sigma_2(\Gr(k,V)))=\overline{\Sigma_{2,\Gr(k,V)}} \ . \]
	\end{main-teo}

\noindent We prove it by computing the dimension of the tangent space at a representative of each orbit, also applying the results we obtained about identifiability. In particular, our result corrects a previous statement in \cite[before Figure 1]{abo2009non} in which the authors stated that $\Sing(\sigma_2(\Gr(3,\mathbb C^7))) = \Gr(3,\mathbb C^7)$.\\
\indent Quite remarkably, the same results of this work hold for the case of spinor varieties: they have been proved in \cite{galgano2023spinor} by the first author simultaneously with this work. This allows us to emphasize the importance of our work, which proposes a new perspective for studying a wider class of varieties. In this respect, we hope that this is the first of a series of papers generalizing our results.\\
\hfill\break
\indent The paper is organized as follows. In Section \ref{sec:preliminaries} we recall some basic facts. In Section \ref{sec:defective} we discuss the toy case $k=2$ in order to give a hint of the general idea behind our construction. In Section \ref{sec:orbit partition} we classify the orbits in $\sigma_2(\Gr(k,V))$, distinguishing them in secant orbits and tangent orbits: we also show the inclusions among their closures and compute the dimensions of the secant orbits. In Section \ref{sec:identifiability of secant orbits} we determine which points are identifiable in $\sigma_2(\Gr(k,V))$, while in Section \ref{sec:cactus} we give the definition of tangential-identifiability in $\tau(\Gr(k,V))$ and we solve the problem of determining which tangent points are tangential-identifiable: as a consequence, we also determine the dimensions of the tangent orbits. These studies allow us to determine the second Terracini locus $\Terr_2(\Gr(k,V))$ too, which is the topic of Section \ref{sec:terracini locus}. The previous sections are also preparatory for the computation of the singular locus in Section \ref{sec:singular locus}. 

\section{Preliminaries}\label{sec:preliminaries}

\indent \indent Let $V$ be a finite dimensional vector space over the complex field $\mathbb C$: actually, all arguments and results hold over any algebraically closed field of characteristic $0$. We assume to have fixed a basis of $V$ so that $V \simeq \mathbb C^N$. We fix the notation $i=1:s$ meaning $i=1,\ldots,s$.\\ 
\indent First, we recall some notions in the theory of Tensor Decomposition. 

\paragraph{Ranks and secant varieties.} Given $X \subset \mathbb P^M$ an irreducible non-degenerate projective variety, the $X$-{\it rank} of a point $p \in \mathbb P^M$, denoted by $r_X(p)$, is the minimum number of points of $X$ whose span contains $p$. The {\it $r$-th secant variety of $X$} is defined as the Zariski closure
\[\sigma_r(X) := \overline{\{p \in \mathbb P^M: r_X(p) \leq r\}} \subset \mathbb P^M \ . \]
The {\it border $X$-rank} of a point $p \in \mathbb P^M$, denoted by $br_X(p)$, is the minimum integer $r$ such that $p \in \sigma_r (X)$. In the case of $X$ being a Grassmannian in its Pl\"{u}cker embedding, we refer to the (border) $X$-rank as {\it skew-symmetric} (border) rank. Although computing the dimension of secant varieties is a hard problem in general, the following inequality always holds
\[\dim \sigma_r (X) \leq \expdim \sigma_r (X):=\min \{r(\dim(X)+1)-1, M\} \ , \]
where the right-hand side is called {\it expected dimension} of the secant variety.

\paragraph{Identifiability.} For any $p \in \mathbb P^M$ with $r_X(p) = r$, the {\it decomposition locus} of $p$ is the set of all points of $X$ giving a decomposition of $p$
\[ Dec_X(p) := \left\{(p_1,\dots,p_r) \ | \ p_i \in X,\ p \in \langle p_1,\dots,p_r \rangle \right\} \subset X^r_{/\mathfrak S_r} \ , \]
where $\mathfrak S_r$ denotes the symmetric group acting on $r$ elements. An element $(p_1,\dots,p_r) \in Dec_X(p)$ is called a {\it decomposition} of $p$. A point $p$ is {\it identifiable} if there exists a unique decomposition of $p$, i.e. $Dec_X(p)$ is a singleton. Otherwise one says that $p$ is {\it unidentifiable}.\\
For any subset $Y\subset X$, we say that $Y$ is (un)identifiable if any point of $Y$ is so. In this case we say that the Zariski closure $\overline{Y}$ is {\it generically (un)identifiable}. In particular, an orbit is (un)identifiable if and only if a representative of it is so.

\paragraph{Tangential-identifiability} The {\em tangential variety} to $X$ in $\mathbb P^M$ is the union of all lines in $\mathbb P^M$ which are tangent to $X$, i.e. it is the set of all tangent points to $X$
\[ \tau(X):=\bigcup_{p\in X}T_pX \ \subset \mathbb P^M \ . \]
In particular, $X\subset \tau(X)\subset \sigma_2(X)$. 

\begin{Def}\label{def:cactus}
	A tangent point $q \in \tau(X)$ is {\em tangential-identifiable} if it lies on a unique tangent line to $X$, or equivalently if there exists a unique $p \in X$ such that $q \in T_pX$. Otherwise it is {\em tangential-unidentifiable}.
\end{Def}

\noindent We say that an orbit is tangential-(un)identifiable if all of its elements are so.

\begin{Def}\label{def:tangential-locus}
	Given a tangent point $q \in \tau(X)$, its {\em tangential-locus} is the set of points $p \in X$ such that $q \in T_{p}X$.
\end{Def}

\noindent Clearly, if $q$ is tangential-identifiable, then its tangential-locus is given by a single point in $X$.

\paragraph{The $2$-nd secant variety of Grassmannians.} Let $\Gr(k,\mathbb C^N)$ be the projective variety of $k$-dimensional linear subspaces in $\mathbb C^N$. We want to study the secant variety $\sigma_2(\Gr(k,\mathbb C^N))$ as an $\SL(N)$-variety. By duality we may assume that $k \leq \frac{N}{2}$.		
At first we observe that the action
\[\SL(N) \times \bigwedge^k \mathbb C^N \longrightarrow \bigwedge^k \mathbb C^N\]
preserves the skew-symmetric (border) rank of a tensor. Therefore regarding the variety $\sigma_2(\Gr(k,\mathbb C^N))$ as the space of skew-symmetric tensors of border rank at most $2$, we get the restricted action
\[\SL(N) \times \sigma_2(\Gr(k,\mathbb C^N)) \longrightarrow \sigma_2(\Gr(k,\mathbb C^N)) \ . \] 
\indent Recall that $\dim \SL(N) = N^2-1$ and $\dim \sigma_2 (\Gr(k,\mathbb C^N)) \leq \expdim \sigma_2 (\Gr(k,\mathbb C^N)) = 2k(N-k)+1$. Thus the assumption $k \leq \frac{N}{2}$ leads to the inequality
\[ N^2-1 = (N-1)(N+1) \geq (N-k)(2k+1) = 2k(N-k) + N-k \geq 2k(N-k)+1 \ , \]
that is $\dim \SL(N) \geq \dim \sigma_2 (\Gr(k,\mathbb C^N))$, which is a necessary condition for having finitely many orbits. Along this section we are going to describe such orbits, together with the dimensions and inclusions of their closures.\\
\indent The main idea is to distinguish the study in two branches, namely the {\em secant} branch and the {\em tangent} branch: such a distinction comes from the definition of secant variety to a variety that, roughly speaking, is the union of bisecant lines and tangent lines to the variety. 

\section{The defective toy-case $\Gr(2,\mathbb C^N) \subset \mathbb P(\bigwedge^2\mathbb C^N)$}\label{sec:defective}

\indent \indent The toy case $k=2$, in which $\bigwedge^2\mathbb C^N$ parametrizes the space $N \times N$ skew-symmetric matrices, is the only defective case (regarding the secant variety of lines) \cite[Theorem 1.1]{boralevi2013note} and it has to be considered on its own. As $\sigma_2(\Gr(2,\mathbb C^N))$ parametrizes the $N \times N$ skew-symmetric matrices of rank at most $4$ (i.e. skew-symmetric rank at most $2$), one easily gets that $\sigma_2(\Gr(2,\mathbb C^4))=\mathbb P^{5}$ and $\sigma_2(\Gr(2,\mathbb C^5))=\mathbb P^{9}$. In this respect, we assume $N\geq 6$. We recall that $\sigma_2(\Gr(2,\mathbb C^N))$ is defective with defect equal to $4$, that is $\dim \sigma_2(\Gr(2,\mathbb C^N))=(2\dim \Gr(2,\mathbb C^N)+1)-4=4(N-2)-3$. Moreover, from the key result \cite[Theorem 1.4]{zak1993tangents} we have that $\tau(\Gr(2,\mathbb C^N)) = \sigma_2(\Gr(2,\mathbb C^N))$. 

\begin{teo}
For $N\geq 6$ the singular locus $\Sing(\sigma_2(\Gr(2,\mathbb C^N)))$ is exactly $\Gr(2,\mathbb C^N)$.
\end{teo}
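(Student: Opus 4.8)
The plan is to run, already in this toy setting, the orbit-theoretic argument that will drive the general case. We regard $\sigma_2(\Gr(2,\mathbb C^N))$ as the projectivisation of the affine cone of skew-symmetric $N\times N$ matrices of ordinary rank at most $4$. The group $\SL(N)$ acts by congruence $M\mapsto gMg^{T}$, which preserves the rank, so in $\mathbb P(\bigwedge^2\mathbb C^N)$ the orbits are exactly: rank $2$ — which is precisely $\Gr(2,\mathbb C^N)$, the decomposable bivectors — and rank $4$ — the dense open orbit $\sigma_2^{\circ}(\Gr(2,\mathbb C^N))$. Since $\Sing(\sigma_2(\Gr(2,\mathbb C^N)))$ is closed and $\SL(N)$-stable, and it cannot contain the smooth generic points of the variety, it is a union of orbits avoiding the open one; hence it is either $\varnothing$ or exactly $\Gr(2,\mathbb C^N)$. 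Everything therefore reduces to deciding whether $\sigma_2(\Gr(2,\mathbb C^N))$ is singular at one — hence, by $\SL(N)$-invariance, at every — rank-$2$ point.

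I would answer this by a local computation at the representative $\omega_0=e_1\wedge e_2$. Write $\mathbb C^N=W\oplus W'$ with $W=\langle e_1,e_2\rangle$, so that $\bigwedge^2\mathbb C^N=\bigwedge^2 W\,\oplus\,(W\otimes W')\,\oplus\,\bigwedge^2 W'$ and a form near $\omega_0$ decomposes as $\alpha+\beta+\gamma$ with $\alpha$ nondegenerate on $W$. The Schur-complement identity for skew matrices provides an analytic change of coordinates $(\alpha,\beta,\gamma)\mapsto(\alpha,\beta,S)$, with $S=\gamma\pm Q_\alpha(\beta)\in\bigwedge^2 W'$ (a form quadratic in $\beta$), under which $\sigma_2(\Gr(2,\mathbb C^N))$ becomes, as a germ at $\omega_0$, the product of a smooth factor (in the $\alpha,\beta$ coordinates) with the affine cone $\mathcal C:=\{S\in\bigwedge^2 W':\ \operatorname{rk} S\le 2\}$ over $\Gr(2,W')$, centred at its vertex $S=0$. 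Thus $\sigma_2(\Gr(2,\mathbb C^N))$ is singular at $\omega_0$ if and only if $\mathcal C$ is singular at its vertex.

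Finally, $\mathcal C$ is singular at the vertex exactly when it is a proper subvariety of $\bigwedge^2 W'$, equivalently when $\dim W'\ge 4$: its ideal is then generated by the $4\times 4$ sub-Pfaffians, which are quadratic forms vanishing to order $2$ at $0$, so the tangent cone already spans the whole of $\bigwedge^2 W'$ while $\dim\mathcal C=2\dim W'-3<\binom{\dim W'}{2}$. This is precisely where the hypothesis $N\ge 6$ (i.e. $\dim W'\ge 4$) enters, and it is the only delicate point of the proof — for $N\le 5$ every skew form on $W'$ already has rank $\le 2$, $\mathcal C$ fills the ambient space, and indeed $\sigma_2(\Gr(2,\mathbb C^N))=\mathbb P(\bigwedge^2\mathbb C^N)$ is smooth. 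Putting the pieces together, $\sigma_2(\Gr(2,\mathbb C^N))$ is singular along $\Gr(2,\mathbb C^N)$ and smooth on the open orbit, so $\Sing(\sigma_2(\Gr(2,\mathbb C^N)))=\Gr(2,\mathbb C^N)$. (Alternatively one can bypass the local computation: since for $N\ge6$ the variety $\sigma_2(\Gr(2,\mathbb C^N))$ is a proper — indeed defective — subvariety and hence not a linear space, the classical inclusion $\sigma_1\subseteq\Sing(\sigma_2)$ gives $\Gr(2,\mathbb C^N)\subseteq\Sing(\sigma_2(\Gr(2,\mathbb C^N)))$, and the orbit dichotomy above again forces equality.)
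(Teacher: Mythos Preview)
Your proposal is correct. In fact, the alternative you sketch in the final parenthetical is essentially the paper's own proof: the paper simply notes the two-orbit decomposition into rank-$2$ and rank-$4$ skew forms and then invokes the classical inclusion $\sigma_{r-1}(X)\subset\Sing(\sigma_r(X))$ (valid whenever $\sigma_r(X)$ is not a linear space, which for $N\geq 6$ is clear since $\sigma_2(\Gr(2,\mathbb C^N))$ is a proper defective subvariety).

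Your main argument via the Schur complement is a genuinely different and more informative route: it exhibits an explicit local analytic isomorphism between the germ of $\sigma_2(\Gr(2,\mathbb C^N))$ at a rank-$2$ point and a smooth factor times the affine cone over $\Gr(2,\mathbb C^{N-2})$ at its vertex, thereby identifying the singularity type rather than merely asserting its existence. The paper's approach is quicker and requires no computation, but yours buys a concrete description of the local structure (and makes transparent why $N\geq 6$ is the threshold). Both rest on the same orbit dichotomy to upgrade ``singular at one point of $\Gr(2,\mathbb C^N)$'' to ``singular exactly along $\Gr(2,\mathbb C^N)$''.
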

\begin{proof}
Since $\Gr(2,\mathbb C^N)$ parametrizes the $N \times N$ skew-symmetric matrices of rank $2$ (i.e. skew-symmetric rank $1$), the variety $\sigma_2(\Gr(2,\mathbb C^N))$ is given by the union of only two orbits: $\Gr(2,\mathbb C^N)$ and $\sigma_2(\Gr(2,\mathbb C^N)) \setminus \Gr(2,\mathbb C^N)$. Since in general $\Sing(\sigma_r(\Gr(k,\mathbb C^N))) \supset \sigma_{r-1}(\Gr(k,\mathbb C^N))$ holds, the thesis follows.
\end{proof}

\section{$\SL(N)$-orbit structure of $\sigma_2(\Gr(k,\mathbb C^N))$}\label{sec:orbit partition}

\indent \indent We describe the $SL(N)$-orbit partition of $\sigma_2(\Gr(k,\mathbb C^N))$ for $k\geq 3$. Recall that by duality we assume $k \leq \frac{N}{2}$. The following notion was already introduced in \cite{catalisano2005secant,baur2007secant,abo2009non}.

\begin{Def}\label{def:hamming distance}
The {\em Hamming distance} $d(p,q)$ between two distinct points $p,q \in \Gr(k,\mathbb C^N)$ is the minimum number of lines contained in $\Gr(k,\mathbb C^N)$ and connecting $p$ to $q$, that is  
\[d(p,q) := \min \left\{ r \ | \ \exists p_1,\dots,p_{r-1} \in \Gr(k,\mathbb C^N) \ \text{s.t.} \ \langle p_i,p_{i+1}\rangle\subset \Gr(k,\mathbb C^N), \ i = 0:r-1 \right\}\]
where $p_0=p$ and $p_r=q$.
\end{Def}

\noindent In particular, $d(p,q)=1$ if and only if $\langle p,q\rangle \subset \Gr(k,\mathbb C^N)$. By convention, we say that $d(p,q)=0$ if and only if $p=q$. The Hamming distance between two points in $\Gr(k,N)$ can be reinterpreted in terms of intersection of subspaces. 

\begin{lemma}\label{distance as intersection}
	Let $p,q \in \Gr(k,\mathbb C^N)$ and let $H_p,H_q\subset \mathbb C^N$ be their corresponding $k$-dimensional subspaces. Then $p,q$ are joined by a line in the Grassmannian if and only if the corresponding subspaces $H_p,H_q$ meet along a common hyperplane. In particular,
	\[ d(p,q)=\codim_{H_p}(H_p\cap H_q)=k-\dim(H_p\cap H_q) \ .\]
	\end{lemma}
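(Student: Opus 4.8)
The plan is to reduce everything to the case $d(p,q)=1$, where the statement becomes a concrete linear-algebra fact, and then run an induction on the Hamming distance. First I would prove the equivalence ``$\langle p,q\rangle \subset \Gr(k,\mathbb C^N)$ if and only if $\dim(H_p\cap H_q)=k-1$''. For the forward direction, pick a decomposable representative $\omega_p = v_1\wedge\cdots\wedge v_k$ for $p$ and $\omega_q = w_1\wedge\cdots\wedge w_k$ for $q$; the hypothesis that every point of the line $\langle p,q\rangle$ lies in the Grassmannian means that $\omega_p + t\,\omega_q$ is decomposable for all $t$, and I would extract from this (e.g.\ via the Plücker relations, or by a direct argument on the kernels of the contraction maps $\bigwedge^k\mathbb C^N \to \bigwedge^{k-1}\mathbb C^N$ sending $\omega \mapsto \iota_\xi\omega$ for $\xi \in (\mathbb C^N)^*$) that $H_p$ and $H_q$ must share a $(k-1)$-dimensional subspace. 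The cleanest route is: a nonzero decomposable $k$-vector $\omega$ has an associated $k$-plane $H_\omega = \{v : v\wedge\omega = 0\}$, and $\omega_p+t\omega_q$ decomposable for two values $t=0,\infty$ together with a third forces the two $k$-planes to be ``adjacent''; concretely one shows $\dim(H_p+H_q)\le k+1$. For the converse, if $\dim(H_p\cap H_q)=k-1$, choose a basis $u_1,\dots,u_{k-1}$ of $H_p\cap H_q$ and extend by $v$ to a basis of $H_p$ and by $w$ to a basis of $H_q$; then $\omega_p = u_1\wedge\cdots\wedge u_{k-1}\wedge v$ and $\omega_q = u_1\wedge\cdots\wedge u_{k-1}\wedge w$, and any combination $\lambda\omega_p+\mu\omega_q = u_1\wedge\cdots\wedge u_{k-1}\wedge(\lambda v+\mu w)$ is manifestly decomposable, so the whole line lies in $\Gr(k,\mathbb C^N)$.

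Next I would pass to general distance. Given $p,q$ with $d(p,q)=r$, take a minimal chain $p=p_0,p_1,\dots,p_r=q$ with $\langle p_{i},p_{i+1}\rangle\subset\Gr(k,\mathbb C^N)$. By the distance-$1$ case, $\dim(H_{p_i}\cap H_{p_{i+1}})=k-1$ for each $i$, hence $\dim(H_{p_i}+H_{p_{i+1}})=k+1$, so along the chain the subspace can ``grow'' by at most one dimension at each step: $\dim(H_p + H_q) \le \dim\big(\sum_i H_{p_i}\big) \le k+r$, which gives $\dim(H_p\cap H_q)\ge k-r$, i.e. $\codim_{H_p}(H_p\cap H_q)\le d(p,q)$. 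For the reverse inequality I would argue that if $\dim(H_p\cap H_q)=k-m$, one can explicitly build a chain of length $m$: pick bases adapted to the flag $H_p\cap H_q \subset H_p$, $H_p\cap H_q\subset H_q$, write $H_p = \langle u_1,\dots,u_{k-m},a_1,\dots,a_m\rangle$ and $H_q=\langle u_1,\dots,u_{k-m},b_1,\dots,b_m\rangle$, and interpolate by the $k$-planes $H_j := \langle u_1,\dots,u_{k-m},\,b_1,\dots,b_j,\,a_{j+1},\dots,a_m\rangle$ for $j=0,\dots,m$, so that consecutive $H_j,H_{j+1}$ differ only in swapping $a_{j+1}$ for $b_{j+1}$ and hence meet in a hyperplane — by the distance-$1$ case each $\langle [H_j],[H_{j+1}]\rangle$ is a line in the Grassmannian. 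This shows $d(p,q)\le m = \codim_{H_p}(H_p\cap H_q)$, and combining the two inequalities finishes the proof.

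I expect the main obstacle to be the forward direction of the distance-$1$ case: showing that an entire pencil of decomposable $k$-vectors forces the associated $k$-planes to be adjacent. The subtle point is that decomposability at finitely many (or even infinitely many but not all) points of the pencil is not enough in low dimensions, so I must genuinely use that \emph{every} point of $\langle p,q\rangle$ is decomposable, and handle degeneracies (e.g.\ the possibility $H_p = H_q$, excluded since $p\neq q$, or $H_p\cap H_q$ having dimension less than $k-1$, which I rule out by a dimension count on $H_p + H_q$ using that $\omega_p\wedge\omega_q$-type obstructions vanish). Everything else is bookkeeping with adapted bases, and the symmetry $\codim_{H_p}(H_p\cap H_q)=\codim_{H_q}(H_p\cap H_q)=k-\dim(H_p\cap H_q)$ is automatic since both $H_p,H_q$ have dimension $k$.
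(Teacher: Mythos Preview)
Your plan is correct and follows the same two-inequality strategy as the paper: construct an explicit chain of length $m=k-\dim(H_p\cap H_q)$ by swapping one basis vector at a time (this is exactly the paper's chain $H_{p_j}$), and bound the codimension above by the Hamming distance using the distance-$1$ characterization along a minimal chain. The one substantive difference is in the latter inequality: you bound $\dim(H_p+H_q)\le \dim\big(\sum_i H_{p_i}\big)\le k+r$ directly from the fact that each $H_{p_{i+1}}$ adds at most one new direction to any subspace already containing $H_{p_i}$, whereas the paper runs an induction on $\ell$ and applies Grassmann's formula to the triple intersection $H_p\cap H_{c_{\ell-1}}\cap H_q$ to force $\dim(H_p\cap H_q)=k-\ell$. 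Your span-growth argument is shorter and more transparent; the paper's version is a bit more involved but gives the exact value of the intersection along the chain as a byproduct. Also note that the paper essentially takes the forward direction of the $d=1$ case for granted (stating only that $p+q\in\Gr(k,\mathbb C^N)$ is equivalent to $\dim(H_p\cap H_q)=k-1$), so your concern about that step is legitimate but not something the paper itself addresses in detail --- it is the classical description of lines on the Grassmannian via flags $W_{k-1}\subset W_{k+1}$.
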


\begin{proof}
	The trivial case $p=q$ (i.e. $H_p=H_q$) implies $d(p,q)=0=k-\dim(H_p\cap H_q)$. If $d(p,q)=1$, then $\langle p,q \rangle \subset \Gr(k,\mathbb C^N)$ and $p+q\in \Gr(k,\mathbb C^N)$, which is equivalent to $\dim(H_p\cap H_q)=k-1$. \\
	\indent For the general case, assume $d(p,q)=\ell\geq 2$ and let $\dim(H_p\cap H_q)=s$. First, we prove that $\ell \leq k-s$. We may assume $p=x_1\wedge \ldots x_s \wedge \ldots \wedge x_k$ and $q=x_1\wedge \ldots \wedge x_s\wedge y_{s+1}\wedge \ldots \wedge y_k$. Consider the points $p_1,\ldots, p_{k-s-1}\in \Gr(k,\mathbb C^N)$ corresponding to the $k$-dimensional subspaces 
		\[
			H_{p_j} = \langle x_1,\ldots , x_s, \ldots , x_{k-j} , y_{k-j+1},\ldots , y_k\rangle \ \ \ \ , \ \forall j=1:k-s-1 \ .
			\]
		Since $\dim(H_{p_j}\cap H_{p_{j+1}})=k-1$ for any $j=0:k-s$ (where $p_0=p$ and $p_{k-s}=q$), it follows that $d(p_{j},p_{j+1})=1$ for any $j=0:k-s$, that is $d(p,q)=\ell \leq k-s$.\\
	\indent On the other hand, from Definition \ref{def:hamming distance} there exists $c_1,\dots,c_{\ell-1} \in \Gr(k,\mathbb C^N)$ such that $\langle c_i, c_{i+1} \rangle \subset \Gr(k, \mathbb C^N)$ for any $i=0:\ell-1$, where $c_0 = p$ and $c_\ell = q$. Clearly it holds $d(p,c_{\ell-1}) = \ell-1$, hence by induction one has $\ell-1 = d(p,c_{\ell-1}) = k - \dim(H_p \cap H_{c_{\ell-1}})$, that is $\dim(H_p \cap H_{c_{\ell-1}}) = k-\ell+1$. Now consider the intersection $H_p \cap H_{c_{\ell-1}} \cap H_q$: using Grassmann Formula its dimension is	
	\begin{align*} &\dim(H_p \cap H_{c_{\ell-1}} \cap H_q) = \\
	&= \dim(H_p \cap H_{c_{\ell-1}}) + \dim(H_{c_{\ell-1}} \cap H_q) - \dim\big((H_p \cap H_{c_{\ell-1}})+(H_{c_{\ell-1}} \cap H_q)\big) \\
	&= 2k-\ell -\dim((H_p \cap H_{c_{\ell-1}})+(H_{c_{\ell-1}} \cap H_q)) \ .
	\end{align*}
	Since the latter dimension can be either $k-1$ or $k$, one has
	\[ k-\ell \leq \dim(H_p \cap H_{p_{\ell-1}} \cap H_q) \leq k-\ell +1 \ . \]
	If it was $\dim(H_p \cap H_{c_{\ell-1}} \cap H_q)=k-\ell +1$, from the inequality $\ell \leq k-s$ one would get 
	\[ k-\ell+1 = \dim (H_p \cap H_{c_{l-1}}\cap H_q) \leq \dim(H_p\cap H_q) = s \leq k-\ell \ , \]
	leading to contradiction. It follows that $\dim(H_p \cap H_{p_{\ell-1}} \cap H_q)=k-\ell$ must hold and, from a similar chain of inequalities as above, we conclude that $\ell = k-s$, that is the thesis.
	\end{proof}

\begin{Def}
	The {\em diameter} of the Grassmannian $\Gr(k,\mathbb C^N)$ is the maximum possible distance between its points, i.e. $\diam\Gr(k,\mathbb C^N)=\max\{d(p,q) \ | \ p,q \in \Gr(k,\mathbb C^N)\}$.
	\end{Def}

\begin{cor}
	The Grassmannian $\Gr(k,\mathbb C^N)$ has diameter $\diam\Gr(k,\mathbb C^N)=k$.
	\end{cor}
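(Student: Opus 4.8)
The plan is to read off the answer directly from Lemma \ref{distance as intersection}, which identifies the Hamming distance with a codimension: for any $p,q \in \Gr(k,\mathbb C^N)$ with associated subspaces $H_p, H_q$, one has $d(p,q) = k - \dim(H_p \cap H_q)$. Consequently
\[
\diam \Gr(k,\mathbb C^N) \;=\; \max_{p,q}\, d(p,q) \;=\; k \;-\; \min_{p,q}\, \dim(H_p \cap H_q),
\]
so the whole problem reduces to showing that two $k$-dimensional subspaces of $\mathbb C^N$ can meet trivially, i.e. that the minimum of $\dim(H_p \cap H_q)$ equals $0$.

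For the lower bound on the intersection, Grassmann's formula gives $\dim(H_p \cap H_q) = 2k - \dim(H_p + H_q) \geq 2k - N$, which is $\leq 0$ under the standing assumption $k \leq \frac{N}{2}$; this only tells us the intersection could be as small as $0$. For the construction realizing it, I would simply exhibit a representative: fix a basis $e_1,\dots,e_N$ of $\mathbb C^N$ (which exists since $2k \leq N$), set $H_p = \langle e_1,\dots,e_k\rangle$ and $H_q = \langle e_{k+1},\dots,e_{2k}\rangle$, so that $H_p \cap H_q = 0$ and hence $d(p,q) = k$. Combined with the obvious upper bound $d(p,q) = k - \dim(H_p\cap H_q) \leq k$ valid for every pair, this yields $\diam \Gr(k,\mathbb C^N) = k$.

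There is essentially no obstacle here: the only thing being used beyond Lemma \ref{distance as intersection} is the elementary fact that $\mathbb C^N$ contains a direct sum of two $k$-dimensional subspaces, which is where the hypothesis $k \leq \frac{N}{2}$ enters. If one wanted to be slightly more careful, one could note that such a pair $(p,q)$ lies in the (open, dense) locus of pairs in general position, so "most" pairs of points in the Grassmannian are at maximal distance $k$; but for the statement of the corollary the single explicit representative above suffices.
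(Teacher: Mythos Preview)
Your proof is correct and is precisely the intended argument: the paper states the corollary without proof, as it is an immediate consequence of Lemma \ref{distance as intersection} together with the standing assumption $k \leq \frac{N}{2}$, and your write-up just makes these details explicit.
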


We will refer to points of $\sigma_2(\Gr(k,\mathbb C^N))$ lying on bisecant lines as {\em secant points}, and to points lying on tangent lines as {\em tangent points}. Analogously, we will consider {\em secant orbits} and {\em tangent orbits}. We stress out that there could be points being both secant and tangent (see Remark \ref{rmk on tangent orbits}).

\subsection{The secant branch} 
\indent \indent Let $(e_1,\ldots, e_N)$ be the standard basis of $\mathbb C^N$ and $k\geq 3$. Fix the notations for $l=1:k-1$
\begin{align}\label{representatives secant}
	\omega & := e_1\wedge \ldots \wedge e_k\\
	\mathbbm{e}_l & := e_1\wedge \ldots \wedge e_{k-l} \wedge e_{k+1} \wedge \ldots \wedge e_{k+l}\\
	s_l & := \omega + \mathbbm{e}_l = e_1\wedge \ldots \wedge e_{k-l} \wedge (e_{k-l+1}\wedge \ldots \wedge e_k + e_{k+1} \wedge \ldots \wedge e_{k+l})
	\end{align}
and
\begin{align*}
	\mathbbm{e}_k & :=e_{k+1}\wedge \ldots \wedge e_{2k}\\
	s_k & := \omega + \mathbbm{e}_k = e_1\wedge \ldots \wedge e_k  + e_{k+1} \wedge \ldots \wedge e_{2k} \ .
	\end{align*}
Notice that $d(\omega,\mathbbm{e}_l)=l$. We show that the {\em secant} $\SL$-orbits in $\sigma_2(\Gr(k,\mathbb C^N))$ are as many as the diameter of $\Gr(k,\mathbb C^N)$, and the points $s_l$ are their representatives. For any $l=1:k$, we denote 
\begin{equation}\label{def secant orbit}
	\Sigma_{l,\Gr(k,\mathbb C^N)} := \left\{p+q \in \sigma_2(\Gr(k,\mathbb C^N)) \ \big| \ d(p,q)=l\right\} \ .
	\end{equation}
Notice that the $\SL$-action preserves the dimensions $\dim(H_p\cap H_q)$ for any $p,q \in \Gr(k,\mathbb C^N)$, and by Lemma \ref{distance as intersection} it preserves the Hamming distance between points in $\Gr(k,\mathbb C^N)$: thus the action of $\SL(N)$ preserves $\Sigma_{l,\Gr(k,\mathbb C^N)}$. 

\begin{prop}\label{prop:partition of secant branch}
	For any $l=1:k$, the set $\Sigma_{l,\Gr(k,\mathbb C^N)}$ is a $\SL$-orbit with representative $s_l$: 
	\[\Sigma_{l,\Gr(k,\mathbb C^N)} = \SL(N)\cdot s_l \ .\]
	In particular, the $\SL$-orbit partition of the open subset $\sigma_2^\circ(\Gr(k,\mathbb C^N))$ of points lying on secant lines is $\sigma_2^\circ(\Gr(k,\mathbb C^N))= \bigcup_{l=1}^k\Sigma_{l,\Gr(k,\mathbb C^N)}$.
	\end{prop}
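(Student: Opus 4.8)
The plan is to show two things: first, that each $\Sigma_{l,\Gr(k,\mathbb C^N)}$ is contained in a single orbit $\SL(N)\cdot s_l$, and second, that these orbits are pairwise disjoint and cover $\sigma_2^\circ(\Gr(k,\mathbb C^N))$. The covering part is almost immediate: a point of $\sigma_2^\circ(\Gr(k,\mathbb C^N))$ is by definition of the form $p+q$ with $p,q\in\Gr(k,\mathbb C^N)$ distinct (the case $p=q$ giving a point of the Grassmannian itself, which we may regard as the degenerate stratum), and by Lemma \ref{distance as intersection} the Hamming distance $d(p,q)$ takes some value $l\in\{1,\dots,k\}$, so $p+q\in\Sigma_{l,\Gr(k,\mathbb C^N)}$. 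Since the $\SL$-action preserves Hamming distance (as already observed, via Lemma \ref{distance as intersection}), the sets $\Sigma_{l,\Gr(k,\mathbb C^N)}$ are $\SL$-stable and pairwise disjoint, so the only real content is transitivity of the action on each one.

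For transitivity, I would take an arbitrary point $p+q\in\Sigma_{l,\Gr(k,\mathbb C^N)}$ with associated $k$-subspaces $H_p,H_q\subset\mathbb C^N$ satisfying $\dim(H_p\cap H_q)=k-l$, and produce $g\in\SL(N)$ sending $p\mapsto\omega$ and $q\mapsto\mathbbm e_l$ (hence $p+q\mapsto s_l$). Choose a basis $x_1,\dots,x_{k-l}$ of $H_p\cap H_q$, extend it by $x_{k-l+1},\dots,x_k$ to a basis of $H_p$ and by $x_{k+1},\dots,x_{k+l}$ to a basis of $H_q$; these $k+l$ vectors are linearly independent because $H_p+H_q$ has dimension $2k-(k-l)=k+l$, and since $k+l\le 2k\le N$ one can complete them to a basis of $\mathbb C^N$. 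The linear map sending this basis to the standard basis $e_1,\dots,e_N$ lies in $\GL(N)$ and can be rescaled on one coordinate to have determinant $1$, giving the desired $g\in\SL(N)$; rescaling changes $\omega$ and $\mathbbm e_l$ only by nonzero scalars, which is harmless projectively (and one absorbs the scalar into the choice of representative). The case $l=k$ needs the slightly separate treatment indicated by the different notation for $\mathbbm e_k$ and $s_k$, but the argument is identical: $H_p\cap H_q=0$, $\dim(H_p+H_q)=2k\le N$, and one maps a basis of $H_p$ to $e_1,\dots,e_k$ and a basis of $H_q$ to $e_{k+1},\dots,e_{2k}$.

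I would then conclude that $\sigma_2^\circ(\Gr(k,\mathbb C^N))=\bigcup_{l=1}^k\Sigma_{l,\Gr(k,\mathbb C^N)}$ is the $\SL$-orbit decomposition of the locus of points lying on genuine secant (bisecant) lines, each orbit being $\SL(N)\cdot s_l$. The one subtlety worth flagging is that the description $p+q$ of a point in $\sigma_2^\circ$ is a priori not unique — a secant point could in principle have several decompositions — so one should be a little careful that $\Sigma_{l,\Gr(k,\mathbb C^N)}$ is well-defined, i.e. that if $p+q=p'+q'$ then $d(p,q)=d(p',q')$; but this is not needed for the proof of the proposition as stated, since $\Sigma_{l}$ is defined as the set of all sums $p+q$ with $d(p,q)=l$, and the point is simply that this set happens to be a single orbit. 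The main (very mild) obstacle is therefore just the bookkeeping to ensure the chosen vectors are independent and that the determinant-one normalization does not disturb anything, together with handling $l=k$ in parallel; there is no genuinely hard step here.
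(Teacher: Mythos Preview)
Your proof is correct and follows the same approach as the paper: choose a basis adapted to $H_p\cap H_q\subset H_p,H_q$, extend it to a basis of $\mathbb C^N$, and send it to the standard basis by an element of $\SL(N)$. The paper's own proof is in fact terser and does not spell out the linear-independence check, the $\SL$ versus $\GL$ normalization (which is immaterial since both groups have the same orbits on projective space), or the well-definedness subtlety you flag.
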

\begin{proof}
	Clearly, the orbit $\SL(N)\cdot s_l$ is contained in $\Sigma_{l,\Gr(k,\mathbb C^N)}$ but actually equality holds: given $p+q \in \Sigma_{l,\Gr(k,\mathbb C^N)}$, we can write it as
	\[p+q = v_1 \wedge \dots \wedge v_{k-l} \wedge \left(v_{k-l+1} \wedge \dots \wedge v_k + v_{k+1} \wedge \dots \wedge v_{k+l} \right )\]
	and one can always find a $g \in SL(N)$ such that $g(v_i) = e_i$ for any $i = 1:k+l$, that is $g\cdot (p+q)=s_l$. 
	\end{proof}

\begin{obs}\label{rmk on secant orbits}
	By the theory of linear algebraic groups, the orbit $\Sigma_{1,\Gr(k,\mathbb C^N)}=\Gr(k,\mathbb C^N)$ is the unique {\em closed} $\SL$-orbit in $\mathbb P(\bigwedge^k\mathbb C^N)$. Moreover, the orbit $\Sigma_{k,\Gr(k,\mathbb C^N)}$ is {\em dense}: indeed, another representative of it is given by $\omega + e_{N-k+1}\wedge \ldots \wedge e_N$, that is the sum of the highest and lowest weight vectors (respectively) in the irreducible $\SL(N)$-representation $\bigwedge^k\mathbb C^N$. Finally, the closures $\overline{\Sigma_{l,\Gr(k,\mathbb C^N)}}$ are already known in the literature as {\em restricted chordal varieties} \cite[Hard exercise 15.44]{FH91}.
	\end{obs}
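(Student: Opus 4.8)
The plan is to verify the three assertions of the remark in turn, using only Proposition~\ref{prop:partition of secant branch}, Lemma~\ref{distance as intersection}, and the standard structure theory of $\SL(N)$-representations. For the claim that $\Sigma_{1,\Gr(k,\mathbb C^N)}$ is the unique closed $\SL(N)$-orbit: Proposition~\ref{prop:partition of secant branch} with $l=1$ identifies it with the orbit of $s_1=e_1\wedge\ldots\wedge e_{k-1}\wedge(e_k+e_{k+1})$, a decomposable $k$-vector, and since $\SL(N)$ is transitive on $k$-dimensional subspaces of $\mathbb C^N$ this orbit is exactly the Pl\"ucker image $\Gr(k,\mathbb C^N)$. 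Now $\bigwedge^k\mathbb C^N$ is the irreducible $\SL(N)$-module with highest weight the $k$-th fundamental weight and highest weight line $[\omega]=[e_1\wedge\ldots\wedge e_k]\in\Gr(k,\mathbb C^N)$; I would then quote the classical fact (Borel fixed point theorem: a closed orbit is projective, hence contains a point fixed by a Borel subgroup, hence is the highest weight orbit, which is therefore the unique closed orbit in $\mathbb P(\bigwedge^k\mathbb C^N)$) to conclude that $\Gr(k,\mathbb C^N)=\Sigma_{1,\Gr(k,\mathbb C^N)}$ is the unique closed orbit.

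For the density of $\Sigma_{k,\Gr(k,\mathbb C^N)}$ and the alternative representative: writing $t:=\omega+e_{N-k+1}\wedge\ldots\wedge e_N$ as $p+q$ with $H_p=\langle e_1,\ldots,e_k\rangle$, $H_q=\langle e_{N-k+1},\ldots,e_N\rangle$, the hypothesis $2k\leq N$ forces $H_p\cap H_q=0$, so $d(p,q)=k$ by Lemma~\ref{distance as intersection} and $t\in\Sigma_{k,\Gr(k,\mathbb C^N)}$; moreover $\omega$ and $e_{N-k+1}\wedge\ldots\wedge e_N$ are respectively the highest and lowest weight vectors of $\bigwedge^k\mathbb C^N$ (one may alternatively exhibit a permutation-type element of $\SL(N)$ fixing $e_1,\ldots,e_k$, sending $e_{k+i}\mapsto e_{N-k+i}$ for $i=1:k$, and rescaled to have determinant $1$, which carries $s_k$ to $t$). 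Since $\sigma_2(\Gr(k,\mathbb C^N))$ is irreducible, its general point lies on a bisecant line through two general points $[p],[q]$ of the Grassmannian; two general $k$-dimensional subspaces of $\mathbb C^N$ with $2k\leq N$ meet only in $0$, so Lemma~\ref{distance as intersection} gives $d(p,q)=k$, i.e. the general point of $\sigma_2(\Gr(k,\mathbb C^N))$ lies in $\Sigma_{k,\Gr(k,\mathbb C^N)}$; being locally closed, this orbit is then dense (and open) in $\sigma_2(\Gr(k,\mathbb C^N))$.

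The third assertion is a matter of comparing definitions: the restricted chordal variety of \cite[Hard exercise 15.44]{FH91} attached to the parameter $l$ is by construction the closure of the locus of sums $[p]+[q]$, $[p],[q]\in\Gr(k,\mathbb C^N)$, with $\dim(H_p\cap H_q)=k-l$, which by Lemma~\ref{distance as intersection} is precisely $\overline{\Sigma_{l,\Gr(k,\mathbb C^N)}}$. I expect no real obstacle in any of this; the least formal point is the density argument, where one must invoke irreducibility of the abstract secant construction to reduce to a general pair of points of the Grassmannian and then use $2k\leq N$ to get transversality of two general $k$-planes, both of which are standard.
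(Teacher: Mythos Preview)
Your proposal is correct; note that in the paper this is a remark with only inline justifications rather than a formal proof, so there is little to compare for points one and three, where you essentially unpack what ``by the theory of linear algebraic groups'' and the reference to \cite{FH91} mean.

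For the density of $\Sigma_{k,\Gr(k,\mathbb C^N)}$ you take a genuinely different route from the paper. The paper's implicit argument is representation-theoretic: it exhibits the sum of the highest and lowest weight vectors as a representative and invokes the general fact that in an irreducible representation the $\SL(N)$-orbit of such a sum is dense in the secant variety of the highest weight orbit. You instead argue geometrically: a general point of $\sigma_2(\Gr(k,\mathbb C^N))$ lies on a secant through a general pair $(p,q)$, and for $2k\leq N$ two general $k$-planes are transverse, so $d(p,q)=k$ by Lemma~\ref{distance as intersection}. Your argument is more elementary and self-contained (it avoids quoting the highest/lowest weight density fact), while the paper's approach has the virtue of fitting into a uniform representation-theoretic picture that also applies to other homogeneous varieties. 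Both are valid; your verification that $\omega+e_{N-k+1}\wedge\ldots\wedge e_N\in\Sigma_{k,\Gr(k,\mathbb C^N)}$ via $H_p\cap H_q=0$ is exactly what is needed to justify the paper's claim that this is ``another representative''.
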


\indent In the following we reinterpret the vector subspaces corresponding to Grassmannian points as kernels and we associate certain vector subspaces to secant points too: we underline that the latter is {\em not} a 1:1 correspondence, in the sense that to any vector subspace could correspond more secant points. 
For any point $q \in \bigwedge^k\mathbb C^N$, we consider the multiplication map
\begin{equation}\label{multiplication map}
	\begin{matrix}
	\psi_q: & \mathbb C^N & \longrightarrow & \bigwedge^{k+1}\mathbb C^N \\
	& x & \mapsto & x \wedge q
	\end{matrix} \end{equation}
and we associate to the point $q$ the subspace
\[ H_{q}:=\Ker(\psi_q) \ . \] 
For instance, for $q=v_1\wedge \ldots \wedge v_k \in \Gr(k,\mathbb C^N)$ one recovers the corresponding subspace $\Ker(\psi_p)=\langle v_1,\ldots, v_k\rangle_{\mathbb C}=H_p$. Notice that at the moment we know the dimension and a basis of $H_q$ only for $q\in \Gr(k,\mathbb C^N)$.

\begin{lemma}\label{lemma:intersection subspaces to generic secant pts}
	Let $p+q \in \Sigma_{k,\Gr(k,\mathbb C^N)}$ be a generic secant point. Then $H_{p+q}=\{0\}$.
	\end{lemma}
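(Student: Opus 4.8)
The plan is to work with the explicit representative $s_k = \omega + \mathbbm{e}_k = e_1 \wedge \dots \wedge e_k + e_{k+1} \wedge \dots \wedge e_{2k}$, which by Proposition \ref{prop:partition of secant branch} generates the dense orbit $\Sigma_{k,\Gr(k,\mathbb C^N)}$. Since genericity here means lying in an orbit and the subspace $H_q = \Ker(\psi_q)$ depends only on the $\SL(N)$-orbit of $q$ (because $\psi_{g\cdot q} = (\bigwedge^{k+1} g)\circ \psi_q \circ g^{-1}$ intertwines the maps), it suffices to compute $\Ker(\psi_{s_k})$ for this single tensor. So the real content is the linear-algebra computation: show that no nonzero $x \in \mathbb C^N$ satisfies $x \wedge (e_1\wedge\dots\wedge e_k + e_{k+1}\wedge\dots\wedge e_{2k}) = 0$ in $\bigwedge^{k+1}\mathbb C^N$.

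First I would split $x$ along the direct-sum decomposition $\mathbb C^N = \langle e_1,\dots,e_k\rangle \oplus \langle e_{k+1},\dots,e_{2k}\rangle \oplus \langle e_{2k+1},\dots,e_N\rangle$, writing $x = a + b + c$ with $a \in \langle e_1,\dots,e_k\rangle$, $b \in \langle e_{k+1},\dots,e_{2k}\rangle$, $c$ in the remaining coordinates. Then
\[
x \wedge s_k = a\wedge\mathbbm{e}_k + b\wedge\omega + c\wedge\omega + c\wedge\mathbbm{e}_k,
\]
using that $a\wedge\omega = 0$ and $b\wedge\mathbbm{e}_k = 0$. These four summands live in pairwise distinct graded pieces with respect to the tri-grading on $\bigwedge^{k+1}\mathbb C^N$ induced by the above decomposition of $\mathbb C^N$ (the multidegrees are, respectively, $(0,k,1)$... more precisely $a\wedge\mathbbm e_k$ has one leg in the first block and $k$ in the second; $b\wedge\omega$ has $k$ in the first and one in the second; $c\wedge\omega$ has $k$ in the first and one in the third; $c\wedge\mathbbm e_k$ has $k$ in the second and one in the third), so they cannot cancel against one another. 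Hence $x\wedge s_k = 0$ forces each summand to vanish separately: $c\wedge\omega = 0$ gives $c \in \langle e_1,\dots,e_k\rangle$, but $c$ has no components there, so $c = 0$; then $a\wedge\mathbbm{e}_k = 0$ gives $a \in \langle e_{k+1},\dots,e_{2k}\rangle$, forcing $a=0$; and $b\wedge\omega=0$ forces $b=0$ similarly. Therefore $x=0$ and $H_{s_k} = \{0\}$, hence $H_{p+q}=\{0\}$ for every generic secant point by $\SL(N)$-equivariance.

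I do not expect any serious obstacle; the only point requiring a little care is the claim that the four wedge terms lie in independent subspaces of $\bigwedge^{k+1}\mathbb C^N$, which is what lets me pass from the vanishing of the sum to the vanishing of each term. This is immediate once one fixes the monomial basis of $\bigwedge^{k+1}\mathbb C^N$ indexed by $(k+1)$-subsets of $\{1,\dots,N\}$ and observes that the monomials appearing in the four terms use disjoint patterns of index blocks (so, in particular, no monomial appears in two different terms). An alternative, coordinate-free way to phrase the same argument — useful if one wants to avoid choosing bases — is to note that $\psi_{s_k}$ restricted to each of the three summands of $\mathbb C^N$ lands in a distinct isotypic-type piece and is injective on each (being essentially multiplication by a decomposable tensor $\omega$ or $\mathbbm e_k$ whose annihilator in the relevant block is trivial); I would likely keep the explicit version for concreteness.
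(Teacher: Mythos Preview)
Your proposal is correct and follows essentially the same approach as the paper: reduce to the representative $s_k=e_1\wedge\dots\wedge e_k+e_{k+1}\wedge\dots\wedge e_{2k}$ by homogeneity of the orbit, expand $x\wedge s_k$, and observe that the resulting summands are linearly independent in $\bigwedge^{k+1}\mathbb C^N$, forcing all coefficients of $x$ to vanish. Your tri-grading argument is just a slightly more structured packaging of the paper's explicit coordinate expansion.
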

\begin{proof}
	By homogeneity of the dense orbit $\Sigma_{k,\Gr(k,\mathbb C^N)}$, we may assume $p+q=e_1\wedge \ldots \wedge e_k + e_{k+1}\wedge \ldots \wedge e_{2k}$. Notice that $\{0\} =H_p\cap H_q \subset H_{p+q}=\Ker(\psi_{p+q})$.\\
	Let $y \in H_{p+q}\subset \mathbb C^N$ with $y=\sum_{i=1}^N \beta_i e_i$: then
	{\small \begin{align*}
		0 & = y \wedge \left( e_1\wedge \ldots \wedge e_k + e_{k+1}\wedge \ldots \wedge e_{2k} \right)\\
		& = \sum_{i=k+1}^N(-1)^k\beta_i e_1\wedge \ldots \wedge e_k \wedge e_i \ + \ \sum_{i=1}^{k}\beta_i e_i\wedge e_{k+1}\wedge \ldots \wedge e_{2k} \ +  \sum_{i=2k+1}^{N}(-1)^k\beta_i e_{k+1}\wedge \ldots \wedge e_{2k}\wedge e_i \ .	
		\end{align*}}
	From the linear independence of the summands above in $\bigwedge^{k+1}\mathbb C^N$, it follows $\beta_i=0$ for any $i=1:N$, that is $y=0$ and the thesis follows.
	\end{proof}

\begin{prop}\label{prop:intersection subspaces to secant pts}
	Let $2\leq l\leq k$ and let $p+q\in \Sigma_{l,\Gr(k,\mathbb C^N)}$. Then $H_{p+q}=H_p\cap H_q$.
	\end{prop}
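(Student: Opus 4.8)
The reverse inclusion is immediate: if $y\in H_p\cap H_q$ then $y\wedge p=0=y\wedge q$, so $y\wedge(p+q)=0$, i.e. $y\in H_{p+q}$. The plan for the nontrivial inclusion $H_{p+q}\subseteq H_p\cap H_q$ is to reduce to the orbit representative $s_l$ by equivariance, compute $H_{s_l}$ by hand, and then close the argument with a dimension count.

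First I would record that the assignment $q\mapsto H_q=\Ker(\psi_q)$, with $\psi_q$ the multiplication map of \eqref{multiplication map}, is $\SL(N)$-equivariant on all of $\bigwedge^k\mathbb C^N$: since the action on the exterior algebra is by algebra automorphisms, $\psi_{g\cdot q}=g\circ\psi_q\circ g^{-1}$ (the outer $g$ acting on $\bigwedge^{k+1}\mathbb C^N$, the inner $g^{-1}$ on $\mathbb C^N$), whence $H_{g\cdot q}=g\cdot H_q$. By Proposition \ref{prop:partition of secant branch} any $p+q\in\Sigma_{l,\Gr(k,\mathbb C^N)}$ is of the form $g\cdot s_l$, so $\dim H_{p+q}=\dim H_{s_l}$; on the other hand $d(p,q)=l$, hence Lemma \ref{distance as intersection} gives $\dim(H_p\cap H_q)=k-l$. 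Therefore, once we check $\dim H_{s_l}=k-l$, the already established inclusion $H_p\cap H_q\subseteq H_{p+q}$ between two subspaces of equal dimension $k-l$ is forced to be an equality, which is the claim. (Alternatively one may bypass equivariance and run the computation below directly in a basis $v_1,\dots,v_N$ with $p=v_1\wedge\cdots\wedge v_k$ and $q=v_1\wedge\cdots\wedge v_{k-l}\wedge v_{k+1}\wedge\cdots\wedge v_{k+l}$.)

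For the computation, recall $s_l=e_1\wedge\cdots\wedge e_{k-l}\wedge\big(e_{k-l+1}\wedge\cdots\wedge e_k+e_{k+1}\wedge\cdots\wedge e_{k+l}\big)=\omega+\mathbbm{e}_l$, and that $H_\omega\cap H_{\mathbbm{e}_l}=\langle e_1,\dots,e_{k-l}\rangle$, which is clearly contained in $H_{s_l}$. For the opposite inclusion take $y=\sum_{i=1}^N\beta_i e_i$ and expand $y\wedge s_l=y\wedge\omega+y\wedge\mathbbm{e}_l$ in the monomial basis of $\bigwedge^{k+1}\mathbb C^N$. Each surviving term is, up to sign, $\beta_i$ times a monomial basis vector: from $y\wedge\omega$ one gets the monomials indexed by $\{1,\dots,k\}\cup\{i\}$ with $i\geq k+1$, and from $y\wedge\mathbbm{e}_l$ the monomials indexed by $\{1,\dots,k-l\}\cup\{k+1,\dots,k+l\}\cup\{i\}$ with $i\in\{k-l+1,\dots,k\}$ or $i\geq k+l+1$. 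Counting how many elements of $\{k-l+1,\dots,k\}$ each of these $(k+1)$-element index sets contains yields $l$ for the first group and $1$ or $0$ for the other two; since $l\geq 2$ these three values are pairwise distinct, so all the monomials appearing in the expansion are pairwise distinct, hence linearly independent. Thus $y\wedge s_l=0$ forces $\beta_i=0$ for every index $i$ that occurs, i.e. for all $i\in\{k-l+1,\dots,N\}$, so $y\in\langle e_1,\dots,e_{k-l}\rangle$; therefore $H_{s_l}=\langle e_1,\dots,e_{k-l}\rangle$ has dimension $k-l$, as needed.

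The only delicate point is the bookkeeping in the last step, namely ruling out cancellation among the expanded monomials, and this is exactly where the hypothesis $l\geq 2$ enters: for $l=1$ the monomial indexed by $\{1,\dots,k+1\}$ arises both from $y\wedge\omega$ (with $i=k+1$) and from $y\wedge\mathbbm{e}_1$ (with $i=k$), the two contributions can cancel, and indeed $s_1$ lies on $\Gr(k,\mathbb C^N)$ so the statement genuinely fails for $l=1$. This argument is the natural generalization of Lemma \ref{lemma:intersection subspaces to generic secant pts}, which is the extreme case $l=k$ with $H_p\cap H_q=\{0\}$.
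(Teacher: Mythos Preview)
Your proof is correct and takes a somewhat different route from the paper's. The paper first isolates the extreme case $l=k$ as Lemma \ref{lemma:intersection subspaces to generic secant pts}, and then for $2\leq l\leq k-1$ works directly with an arbitrary $p+q\in\Sigma_{l,\Gr(k,\mathbb C^N)}$: after the easy inclusion, it picks $y\in H_{p+q}$ independent of $H_p\cap H_q$, factors out the common part $v_1\wedge\cdots\wedge v_{k-l}$, and passes to the quotient $V'=\mathbb C^N/(H_p\cap H_q)$ where the residual $a+b$ lies in the dense orbit $\Sigma_{l,\Gr(l,V')}$, so the lemma forces $y=0$. Your argument instead invokes the $\SL(N)$-equivariance of $q\mapsto H_q$ to transport the question to the single representative $s_l$, and then performs one uniform monomial computation valid for all $l\geq 2$ at once; your index-set count (the number of indices in $\{k-l+1,\dots,k\}$ being $l$, $1$, or $0$) is a clean way to rule out cancellations and is precisely where $l\geq 2$ enters. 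The paper's approach is more structural (reduce-to-base-case via quotient) and makes the role of Lemma \ref{lemma:intersection subspaces to generic secant pts} explicit; yours is more self-contained and avoids the quotient, at the cost of the small equivariance/dimension-count layer. Both are equally rigorous.
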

\begin{proof}
	By Lemma \ref{lemma:intersection subspaces to generic secant pts} we know that the thesis holds for $l=k$. Fix $2\leq l\leq k-1$ and a point $p+q\in \Sigma_{l,\Gr(k,\mathbb C^N)}$. Let $p=v_1\wedge \ldots \wedge v_{k}$ and $q=v_1\wedge \ldots \wedge v_{k-l}\wedge w_{k-l+1}\wedge \ldots \wedge w_k$, so that $H_{p}\cap H_q=\langle v_1, \ldots , v_{k-l}\rangle_{\mathbb C}$.\\
	Consider the multiplication map $\psi_{p+q}$ as in \eqref{multiplication map}: then it clearly holds $H_p\cap H_q \subset H_{p+q}:=\Ker(\psi_{p+q})$. Take $y \in H_{p+q}$ being linearly independent from $v_1,\ldots, v_{k-l}$: in particular, if we complete $\{v_1,\ldots , v_{k-l}\}$ to a basis of $\mathbb C^N$, we may assume that $y$ does not depend on $v_1,\ldots, v_{k-l}$. Then we get
	\begin{align*}
		0 & = y \wedge (p+q) = y \wedge v_1\wedge \ldots \wedge v_{k-l}\wedge \left( v_{k-l+1}\wedge \ldots \wedge v_k + w_{k-l+1}\wedge \ldots \wedge w_k \right)\\
		& = (-1)^{k-l} v_1\wedge \ldots \wedge v_{k-l}\wedge \left( y\wedge\underbrace{ v_{k-l+1}\wedge \ldots \wedge v_k}_{=: \ a} \ + \ y\wedge \underbrace{w_{k-l+1}\wedge \ldots \wedge w_k}_{=: \ b} \right) \ .
		\end{align*}
	If we denote $V':=\mathbb C^N/H_p\cap H_q\simeq \mathbb C^{N-k+l}$, then the multiplication map $v_1\wedge \ldots \wedge v_{k-l}\wedge \bullet : \bigwedge^{l+1}V' \rightarrow \bigwedge^{k+1}\mathbb C^N$ is injective, hence the above equality implies $y\wedge(a+b)=0$. Notice also that $a,b \in \Gr(l,V')$ with $d(a,b)=l$, thus $a+b$ lies in the dense orbit $\Sigma_{l, \Gr(l,V')}$. 
	Moreover, $y \in \Ker\left( \psi_{a+b}: V'\rightarrow \bigwedge^{l+1}V'\right)=: H_{a+b}$ and by Lemma \ref{lemma:intersection subspaces to generic secant pts} it holds $H_{a+b}=\{0\}$. It follows that $y=0$, thus $H_{p+q}=H_p\cap H_q$.
	\end{proof}

\subsection{The tangent branch} 
\indent \indent Now we focus on orbits of points lying on tangent lines to $\Gr(k,\mathbb C^N)$, whose union gives the tangential variety $\tau(\Gr(k,\mathbb C^N))$. By homogeneity of the Grassmannian, it is enough to study only one tangent space. Fix the standard basis $\{e_1,\ldots, e_N\}$ of $\mathbb C^N$ and consider the point $[\omega]=[e_1 \wedge \dots \wedge e_k ] \in \Gr(k,\mathbb C^N)$. It is known that the (affine) tangent space at $[\omega]$ is
\begin{align}\label{SpTang:Grass}
	T_{[\omega]} \Gr(k, \mathbb C^N) &= \mathbb C^N \wedge e_2 \wedge \dots \wedge e_k + \dots + e_1 \wedge \dots \wedge e_{k-1} \wedge \mathbb C^N  \\
	&= \left\langle e_1 \wedge \dots \wedge e_k, \{ e_1\wedge \ldots \wedge \widehat{e_i} \wedge \ldots \wedge e_k \wedge e_l \ | \ i=1:k, l=k+1:N\} \right\rangle \nonumber
	\end{align}
where $\widehat{e_i}$ denotes that $e_i$ has been removed. 

\begin{obs} \label{Isom:TangSp&Mat}
	For a given $p=[v_1\wedge \ldots \wedge v_k] \in \Gr(k, \mathbb C^N)$ one has the following isomorphism \cite[Example 16.1]{harris2013algebraic} wich is compatible with the group action:
	\[T_p \Gr(k,\mathbb C^N) = \bigwedge^{k-1}H_p\wedge V \simeq \Hom \left(H_p, \mathbb C^N / H_p \right ) \ . \]
	\end{obs}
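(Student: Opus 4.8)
The plan is to reduce to the base point $[\omega]$ by homogeneity, read off the affine tangent space there from \eqref{SpTang:Grass}, and then obtain the identification with $\Hom(H_p,\mathbb C^N/H_p)$ from the natural contraction $\bigwedge^{k-1}H_p\cong\bigwedge^k H_p\otimes H_p^{*}$ composed with the quotient $\mathbb C^N\twoheadrightarrow\mathbb C^N/H_p$, keeping equivariance built into each step.

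First I would establish $T_p\Gr(k,\mathbb C^N)=\bigwedge^{k-1}H_p\wedge\mathbb C^N$ for every $p$. Since $\GL(N)$ (a fortiori $\SL(N)$) acts transitively on $\Gr(k,\mathbb C^N)$ and linearly on $\bigwedge^k\mathbb C^N$, any $g$ with $g\cdot[\omega]=p$ carries the affine tangent space at $[\omega]$ onto the one at $p$ and carries $\langle e_1,\dots,e_k\rangle$ onto $H_p$; applying $g$ to the right-hand side of \eqref{SpTang:Grass} turns it into $\bigwedge^{k-1}H_p\wedge\mathbb C^N$, so the two sides agree. Equivalently, differentiating a curve $t\mapsto v_1(t)\wedge\cdots\wedge v_k(t)$ with $v_i(0)=v_i$ gives $\sum_i v_1\wedge\cdots\wedge\dot v_i\wedge\cdots\wedge v_k\in\bigwedge^{k-1}H_p\wedge\mathbb C^N$, every element of that space arises this way, and the dimension $k(N-k)+1=\dim\Gr(k,\mathbb C^N)+1$ matches, as it must.

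Next I would build the isomorphism explicitly. Fix a basis $v_1,\dots,v_k$ of $H_p$ and set $\omega_p:=v_1\wedge\cdots\wedge v_k$, a generator of the line $\bigwedge^k H_p$ (which is $\langle\omega\rangle$ when $p=[\omega]$). The perfect pairing $\bigwedge^{k-1}H_p\otimes H_p\to\bigwedge^k H_p$ identifies $\bigwedge^{k-1}H_p$ with $H_p^{*}$ up to the twist by $\bigwedge^k H_p$, sending $v_1\wedge\cdots\wedge\widehat{v_i}\wedge\cdots\wedge v_k\mapsto\pm v_i^{*}$; wedging with $\mathbb C^N$ and then projecting the $\mathbb C^N$-factor onto $\mathbb C^N/H_p$ produces a linear map $\bigwedge^{k-1}H_p\wedge\mathbb C^N\to H_p^{*}\otimes(\mathbb C^N/H_p)=\Hom(H_p,\mathbb C^N/H_p)$, concretely $\sum_i(v_1\wedge\cdots\wedge\widehat{v_i}\wedge\cdots\wedge v_k)\wedge u_i\mapsto\bigl(v_i\mapsto\pm\overline{u_i}\bigr)$. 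One checks that the classes $\overline{u_i}\in\mathbb C^N/H_p$ are well defined by the element of $\bigwedge^{k-1}H_p\wedge\mathbb C^N$, that the map is surjective, and that its kernel is exactly $\mathbb C\,\omega_p=\bigwedge^k H_p$, i.e. the tautological line through $p$ (the summand appearing in \eqref{SpTang:Grass}); hence it descends to an isomorphism from the projective tangent space, the affine one in \eqref{SpTang:Grass} being its $\mathbb C^{\times}$-extension by that line. Since the transitive action, the contraction pairing and the quotient $\mathbb C^N\to\mathbb C^N/H_p$ are all natural, the family of these isomorphisms is compatible with the group action, that is, it realizes the standard isomorphism of $\SL(N)$-equivariant bundles $T\Gr(k,\mathbb C^N)\cong\mathcal S^{*}\otimes(\mathbb C^N/\mathcal S)$ with $\mathcal S$ the tautological subbundle.

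The point requiring the most care — and the only real subtlety — is the one-dimensional bookkeeping: $\bigwedge^{k-1}H_p\wedge\mathbb C^N$ contains $\omega$ and has dimension $k(N-k)+1$, whereas $\Hom(H_p,\mathbb C^N/H_p)$ has dimension $k(N-k)$, so the ``$\simeq$'' must be read after killing the tautological line; moreover the contraction step carries an inessential twist by $\bigwedge^k H_p$, so ``compatible with the group action'' is cleanest to make precise via the bundle statement above rather than as a literal $\GL(N)$-equivariant isomorphism of every fiber with one fixed vector space. Granting \eqref{SpTang:Grass}, everything else is routine multilinear algebra.
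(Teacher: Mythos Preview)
The paper does not give its own proof of this remark: it is stated as a known fact with a reference to \cite[Example 16.1]{harris2013algebraic}, so there is nothing to compare your argument against. Your proposal is correct and supplies exactly the details the paper omits: the reduction to the base point via transitivity and \eqref{SpTang:Grass}, and the identification with $\Hom(H_p,\mathbb C^N/H_p)$ via the contraction $\bigwedge^{k-1}H_p\cong\bigwedge^k H_p\otimes H_p^{*}$ followed by the quotient $\mathbb C^N\to\mathbb C^N/H_p$.

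Your closing observation about the one-dimensional discrepancy is well taken and worth emphasizing: as written in the paper, the equality $T_p\Gr(k,\mathbb C^N)=\bigwedge^{k-1}H_p\wedge V$ describes the \emph{affine} tangent space of dimension $k(N-k)+1$, while $\Hom(H_p,\mathbb C^N/H_p)$ has dimension $k(N-k)$, so the ``$\simeq$'' must indeed be read modulo the tautological line $\bigwedge^k H_p$. The paper implicitly relies on this reading (e.g.\ in Proposition~\ref{prop:tangent dimensions}, where the fibre of the tangent bundle is taken to be $\mathbb C^k\otimes\mathbb C^{N-k}$), and your remark about the twist by $\bigwedge^k H_p$ and the cleaner bundle formulation $T\Gr(k,\mathbb C^N)\cong\mathcal U^\vee\otimes\mathcal Q$ is exactly how the paper uses the statement later.
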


\begin{obs} \label{tangent space:dist}
	One can describe the tangent space to the Grassmannian at some point $p$ using the notion of Hamming distance. If $p = v_1 \wedge \dots \wedge v_k \in \Gr(k,\mathbb C^N)$, then from \eqref{SpTang:Grass} one gets that $T_p \Gr(k,\mathbb C^N) = \left\langle \big\{q \in \Gr (k,\mathbb C^N) \ \big| \ d(p,q) \leq 1  \big\}\right\rangle$, that is the tangent space at a point is generated by lines passing through that point. 
	\end{obs}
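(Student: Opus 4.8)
The plan is to prove the two inclusions separately after reducing to a convenient basepoint. Since the $\SL(N)$-action on $\bigwedge^k\mathbb C^N$ is transitive on $\Gr(k,\mathbb C^N)$, maps tangent spaces to tangent spaces, and—by Lemma \ref{distance as intersection}—preserves the Hamming distance, it suffices to establish the identity at $p=[\omega]$ with $\omega=e_1\wedge\ldots\wedge e_k$ and $H_p=\langle e_1,\ldots,e_k\rangle$. First I would rewrite \eqref{SpTang:Grass} in the compact form $T_{[\omega]}\Gr(k,\mathbb C^N)=\bigwedge^{k-1}H_p\wedge\mathbb C^N$ (the summand indexed by $i$ being $\mathbb C^N\wedge e_1\wedge\ldots\wedge\widehat{e_i}\wedge\ldots\wedge e_k$), which is consistent with Remark \ref{Isom:TangSp&Mat}.

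For the inclusion $\langle\{q : d(p,q)\leq 1\}\rangle\subseteq T_p\Gr(k,\mathbb C^N)$, I would take $q\in\Gr(k,\mathbb C^N)$ with $d(p,q)\leq1$. If $q=p$ then $q=\omega\in T_p\Gr(k,\mathbb C^N)$. If $d(p,q)=1$, Lemma \ref{distance as intersection} gives $\dim(H_p\cap H_q)=k-1$; choosing a basis $u_1,\ldots,u_{k-1}$ of $H_p\cap H_q$ and completing it by some $w\in H_q$ to a basis of $H_q$, one has $q=\pm\,u_1\wedge\ldots\wedge u_{k-1}\wedge w$ with $u_1\wedge\ldots\wedge u_{k-1}\in\bigwedge^{k-1}H_p$, hence $q\in\bigwedge^{k-1}H_p\wedge\mathbb C^N=T_p\Gr(k,\mathbb C^N)$. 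Since $T_p\Gr(k,\mathbb C^N)$ is a linear subspace, it then contains the whole span.

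For the reverse inclusion it is enough to realize each generator appearing in \eqref{SpTang:Grass} as a scalar multiple of a Grassmannian point at distance $\leq1$ from $p$. The vector $\omega$ is the Plücker point $p$ itself, at distance $0$. For $i\in\{1,\ldots,k\}$ and $l\in\{k+1,\ldots,N\}$, the vector $e_1\wedge\ldots\wedge\widehat{e_i}\wedge\ldots\wedge e_k\wedge e_l$ is, up to sign, the Plücker point of the $k$-subspace $H_q:=\langle e_1,\ldots,\widehat{e_i},\ldots,e_k,e_l\rangle$, which contains $\langle e_1,\ldots,\widehat{e_i},\ldots,e_k\rangle$ and is distinct from $H_p$; thus $\dim(H_p\cap H_q)=k-1$ and $d(p,q)=1$ by Lemma \ref{distance as intersection}. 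Hence every generator of $T_p\Gr(k,\mathbb C^N)$ lies in $\langle\{q:d(p,q)\leq1\}\rangle$, and the two spaces coincide. Finally, a line of $\Gr(k,\mathbb C^N)$ through $p$ joins $p$ to a point $q$ with $d(p,q)=1$, so the span of all such lines equals $\langle\{q:d(p,q)\leq1\}\rangle$, giving the claimed reformulation that $T_p\Gr(k,\mathbb C^N)$ is generated by the lines of the Grassmannian through $p$.

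The argument is entirely elementary; the only point requiring a little care is the first inclusion, where one must check that the span of the (infinitely many) distance-$\leq1$ points does not exceed the tangent space. This is precisely where Lemma \ref{distance as intersection} enters: it forces such a point to be decomposable as an element of $\bigwedge^{k-1}H_p$ wedged with a single vector, which is exactly the shape of the generators of $T_p\Gr(k,\mathbb C^N)$ in \eqref{SpTang:Grass}.
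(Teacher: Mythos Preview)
Your argument is correct and is exactly the one the paper has in mind: the statement is presented there as a remark that follows immediately from the explicit generators in \eqref{SpTang:Grass}, and you have simply spelled out the two inclusions using those generators together with Lemma \ref{distance as intersection}. There is nothing to add or to fix.
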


Consider the $k$ elements in $T_\omega\Gr(k,\mathbb C^N)$
\begin{equation} \label{representatives tangent} 
	\theta_l:=\sum_{j=1}^l e_1 \wedge \dots \wedge e_{j-1} \wedge e_{k+j} \wedge e_{j+1} \wedge \dots \wedge e_k \ \ \ \ , \ \forall l=1:k \ .
	\end{equation}
\noindent From Remark \ref{Isom:TangSp&Mat}, any element of $T_\omega\Gr(k,\mathbb C^N)$ corresponds to an $(N-k)\times k$ matrix in $\mathbb C^{N-k}\otimes \mathbb C^{k}$: in particular, any $\theta_l$ corresponds to an $(N-k) \times k$ matrix of rank $l$. The only invariant in $\mathbb C^{N-k}\otimes \mathbb C^k$ is the rank and, since the isomorphism is compatible with the group action, so is for $T_\omega\Gr(k,\mathbb C^N)$. In particular, all points in $T_\omega\Gr(k,\mathbb C^N)$ of rank $l$ are conjugated to $\theta_l$.
Finally, by homogeneity of $\Gr(k,\mathbb C^N)$, the action of $\SL(N)$ conjugates all tangent spaces, and for any $l=1:k$ the unions of all the rank-$l$ orbits (as the tangent space varies) gives an $\SL(N)$-orbit in the tangential variety $\tau(\Gr(k,\mathbb C^N))$, namely
\begin{equation}\label{def tangent orbit}
	\Theta_{l,\Gr(k,\mathbb C^N)}:= \left\{ t \in \tau(\Gr(k,\mathbb C^N)) \ | \ t \ \text{has rank} \ l\right\} \ .
	\end{equation}
From the arguments above we conclude the following result.

\begin{prop}\label{prop:tangent orbits}
	For any $l=1:k$, the set $\Theta_{l,\Gr(k,\mathbb C^N)}$ coincides with the $\SL$-orbit \[\Theta_{l,\Gr(k,\mathbb C^N)} = \SL(N)\cdot \theta_l \ . \]
	In particular, the $\SL$-orbit partition of the tangential variety is $\tau(\Gr(k,\mathbb C^N))= \bigcup_{l=1}^k\Theta_{l,\Gr(k,\mathbb C^N)}$.
\end{prop}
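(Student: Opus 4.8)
The plan is to show the two inclusions $\Theta_{l,\Gr(k,\mathbb C^N)} \supseteq \SL(N)\cdot\theta_l$ and $\Theta_{l,\Gr(k,\mathbb C^N)} \subseteq \SL(N)\cdot\theta_l$, and then deduce the orbit partition of $\tau(\Gr(k,\mathbb C^N))$ from the fact that each $\theta_l$ has rank exactly $l$ while $l$ ranges over $1,\ldots,k=\diam\Gr(k,\mathbb C^N)$. The first inclusion is immediate: $\theta_l$ is a rank-$l$ element of $T_\omega\Gr(k,\mathbb C^N)$, hence lies in $\tau(\Gr(k,\mathbb C^N))$, and the $\SL(N)$-action preserves both membership in the tangential variety (since it permutes tangent spaces by homogeneity of the Grassmannian) and the rank invariant (via the $\SL(N)$-equivariant isomorphism $T_p\Gr(k,\mathbb C^N)\simeq\Hom(H_p,\mathbb C^N/H_p)$ of Remark \ref{Isom:TangSp&Mat}); so the whole orbit of $\theta_l$ consists of rank-$l$ tangent points.

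For the reverse inclusion, I would take an arbitrary $t\in\Theta_{l,\Gr(k,\mathbb C^N)}$, i.e.\ a tangent point of rank $l$. By definition of $\tau(\Gr(k,\mathbb C^N))$, there is some $p\in\Gr(k,\mathbb C^N)$ with $t\in T_p\Gr(k,\mathbb C^N)$. Using homogeneity of $\Gr(k,\mathbb C^N)$, choose $g\in\SL(N)$ with $g\cdot p=[\omega]$; then $g\cdot t\in T_\omega\Gr(k,\mathbb C^N)$ and, since the $\SL(N)$-action preserves the rank, $g\cdot t$ still has rank $l$. Now apply the last sentence before the proposition: via the identification $T_\omega\Gr(k,\mathbb C^N)\simeq\mathbb C^{N-k}\otimes\mathbb C^k$, the only $\GL$-invariant on the ambient matrix space is the rank, and this identification is compatible with the (parabolic) stabilizer action; hence all rank-$l$ elements of $T_\omega\Gr(k,\mathbb C^N)$ lie in a single orbit under the stabilizer of $[\omega]$, and in particular $g\cdot t$ is conjugate to $\theta_l$. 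Composing the two group elements gives $t\in\SL(N)\cdot\theta_l$, as desired.

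The only genuinely delicate point is the claim that \emph{two rank-$l$ elements of the single tangent space $T_\omega\Gr(k,\mathbb C^N)$ are conjugate under an element of $\SL(N)$ fixing $[\omega]$} — a priori the isomorphism $T_\omega\Gr(k,\mathbb C^N)\simeq\Hom(H_\omega,\mathbb C^N/H_\omega)\simeq\Mat_{(N-k)\times k}$ is only equivariant for the Levi factor $\mathrm{S}(\GL(k)\times\GL(N-k))$ of the parabolic, and one must check this subgroup already acts transitively on rank-$l$ matrices (which it does, by the usual row/column reduction, after absorbing determinants into the $\SL$ condition using $N-k\geq 1$) and that one may realize the required change of basis of $H_\omega$ and of a complement by an element of $\SL(N)$ rather than merely $\GL(N)$. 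Once this is in place, the orbit partition statement $\tau(\Gr(k,\mathbb C^N))=\bigcup_{l=1}^k\Theta_{l,\Gr(k,\mathbb C^N)}$ follows because every tangent point has a well-defined rank in $\{1,\ldots,k\}$ (rank $0$ being excluded, or rather giving the origin, which is not a projective point), so the $\Theta_{l,\Gr(k,\mathbb C^N)}$ are disjoint and cover $\tau(\Gr(k,\mathbb C^N))$ by the argument above.
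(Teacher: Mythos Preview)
Your proposal is correct and follows essentially the same approach as the paper: move an arbitrary rank-$l$ tangent point to $T_\omega\Gr(k,\mathbb C^N)$ by homogeneity, then use the rank classification of $(N-k)\times k$ matrices via the equivariant isomorphism of Remark~\ref{Isom:TangSp&Mat} to conjugate it to $\theta_l$. If anything, you are more careful than the paper about the one subtle point---that the stabilizer of $[\omega]$ in $\SL(N)$ (via its Levi factor $\mathrm{S}(\GL(k)\times\GL(N-k))$) already acts transitively on rank-$l$ matrices, so one stays inside $\SL(N)$ rather than $\GL(N)$---which the paper absorbs into the phrase ``the isomorphism is compatible with the group action.''
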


\begin{obs}\label{rmk on tangent orbits}
	From \eqref{representatives tangent} we have $\theta_1=e_2\wedge \ldots \wedge e_k \wedge e_{k+1}\in \Gr(k,\mathbb C^N)$. Together with Remark \ref{rmk on secant orbits} it follows 
	\[\Sigma_{1,\Gr(k,\mathbb C^N)}=\Theta_{1,\Gr(k,\mathbb C^N)}=\Gr(k,\mathbb C^N) \ .\]
	Morover, $\theta_2=e_2\wedge \ldots \wedge e_k\wedge e_{k+1} + e_1\wedge e_3\wedge \ldots \wedge e_k \wedge e_{k+2}$ is sum of two points in $\Gr(k,\mathbb C^N)$ having Hamming distance $2$, thus $\theta_2\in \Sigma_{2,\Gr(k,\mathbb C^N)}$ and we get
	\[ \Sigma_{2,\Gr(k,\mathbb C^N)}=\Theta_{2,\Gr(k,\mathbb C^N)} \ .\]
	Finally, the orbit $\Theta_{k,\Gr(k,\mathbb C^N)}$ is given by points corresponding to maximum rank $(N-k)\times k$ matrices, which are a dense subset in $\mathbb C^{N-k}\otimes \mathbb C^k$. Thus the orbit $\Theta_{k,\Gr(k,\mathbb C^N)}$ is {\em dense} in $\tau(\Gr(k,\mathbb C^N))$.
	\end{obs}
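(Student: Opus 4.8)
The plan is to verify separately the three assertions of the remark, in each case reducing to an elementary computation with the explicit representatives $\theta_l$ of \eqref{representatives tangent} and then invoking the orbit descriptions already established. First I would treat the identity $\Sigma_{1,\Gr(k,\mathbb C^N)}=\Theta_{1,\Gr(k,\mathbb C^N)}=\Gr(k,\mathbb C^N)$: taking $l=1$ in \eqref{representatives tangent} the sum reduces to the single summand $\theta_1=e_{k+1}\wedge e_2\wedge\dots\wedge e_k$, which is (up to sign) a decomposable $k$-vector, hence defines a point of $\Gr(k,\mathbb C^N)$. By Proposition \ref{prop:tangent orbits} we have $\Theta_{1,\Gr(k,\mathbb C^N)}=\SL(N)\cdot\theta_1$; since $\Gr(k,\mathbb C^N)$ is a single closed $\SL(N)$-orbit and equals $\Sigma_{1,\Gr(k,\mathbb C^N)}$ by Remark \ref{rmk on secant orbits}, the orbit of $\theta_1$ must be all of $\Gr(k,\mathbb C^N)$, which gives the claim.

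For the equality $\Sigma_{2,\Gr(k,\mathbb C^N)}=\Theta_{2,\Gr(k,\mathbb C^N)}$ I would, after reordering the two wedge summands of $\theta_2$ in \eqref{representatives tangent}, write $\theta_2=p+q$ with $p=e_2\wedge\dots\wedge e_k\wedge e_{k+1}$ and $q=e_1\wedge e_3\wedge\dots\wedge e_k\wedge e_{k+2}$, both points of $\Gr(k,\mathbb C^N)$; the associated subspaces are $H_p=\langle e_2,\dots,e_k,e_{k+1}\rangle$ and $H_q=\langle e_1,e_3,\dots,e_k,e_{k+2}\rangle$, with intersection $\langle e_3,\dots,e_k\rangle$ of dimension $k-2$. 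By Lemma \ref{distance as intersection} this gives $d(p,q)=k-(k-2)=2$, so $\theta_2\in\Sigma_{2,\Gr(k,\mathbb C^N)}$ by \eqref{def secant orbit}. As both $\Theta_{2,\Gr(k,\mathbb C^N)}=\SL(N)\cdot\theta_2$ (Proposition \ref{prop:tangent orbits}) and $\Sigma_{2,\Gr(k,\mathbb C^N)}=\SL(N)\cdot s_2$ (Proposition \ref{prop:partition of secant branch}) are single $\SL(N)$-orbits, sharing the point $\theta_2$ forces them to coincide.

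Finally, for the density of $\Theta_{k,\Gr(k,\mathbb C^N)}$ in $\tau(\Gr(k,\mathbb C^N))$ I would use the $\SL$-equivariant identification $T_\omega\Gr(k,\mathbb C^N)\simeq\mathbb C^{N-k}\otimes\mathbb C^k$ of Remark \ref{Isom:TangSp&Mat}: since $k\leq\frac{N}{2}\leq N-k$, the maximal rank of such a matrix is $k$, and the rank-$k$ locus $U\subset T_\omega\Gr(k,\mathbb C^N)$ is Zariski-open and dense, its complement being the determinantal locus of matrices of rank $\leq k-1$, of strictly smaller dimension. Since $U\subset\Theta_{k,\Gr(k,\mathbb C^N)}$ and the orbit is $\SL(N)$-stable, $\Theta_{k,\Gr(k,\mathbb C^N)}$ contains $g\cdot U$ for every $g\in\SL(N)$, i.e. a dense open subset of every tangent space $T_p\Gr(k,\mathbb C^N)=g\cdot T_\omega\Gr(k,\mathbb C^N)$. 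Passing to the incidence variety $I=\{(p,y)\mid p\in\Gr(k,\mathbb C^N),\ y\in T_p\Gr(k,\mathbb C^N)\}$ — irreducible, as it fibers over the irreducible $\Gr(k,\mathbb C^N)$ with irreducible equidimensional fibers — with second projection $\pi\colon I\to\mathbb P(\bigwedge^k\mathbb C^N)$ and $\tau(\Gr(k,\mathbb C^N))=\overline{\pi(I)}$, the sublocus $I^\circ\subset I$ on which $y$ has rank $k$ is open and dense in $I$, so $\overline{\pi(I^\circ)}=\overline{\pi(I)}=\tau(\Gr(k,\mathbb C^N))$; since $\pi(I^\circ)\subset\Theta_{k,\Gr(k,\mathbb C^N)}\subset\tau(\Gr(k,\mathbb C^N))$ we conclude $\overline{\Theta_{k,\Gr(k,\mathbb C^N)}}=\tau(\Gr(k,\mathbb C^N))$. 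The only mildly delicate point is this last density-of-the-image step (passing from a dense open of every fiber to a dense subset of the total image); the first two parts are routine computations.
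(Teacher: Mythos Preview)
Your proposal is correct and follows essentially the same reasoning the paper sketches in the remark itself: identifying $\theta_1$ as decomposable, computing $d(p,q)=2$ for the two summands of $\theta_2$, and invoking the density of full-rank matrices. The only place you add substance is the third part, where the paper simply asserts that maximal-rank matrices being dense in $\mathbb C^{N-k}\otimes\mathbb C^k$ implies density of $\Theta_{k,\Gr(k,\mathbb C^N)}$ in $\tau(\Gr(k,\mathbb C^N))$; your incidence-variety argument makes this passage rigorous, and the ``mildly delicate'' step you flag is genuinely the point the paper leaves implicit.
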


\subsection{Orbit closures: inclusions; dimensions of secant orbits} 

\indent \indent From the previous subsection we get the $\SL(N)$-orbit partition
\[ \sigma_2(\Gr(k,\mathbb C^N))=\Gr(k,\mathbb C^N) \sqcup \left( \bigsqcup_{l=2}^k \Sigma_{l,\Gr(k,\mathbb C^N)} \right) \cup \left( \bigsqcup_{l=3}^k\Theta_{l,\Gr(k,\mathbb C^N)} \right) \ , \]
where the non-disjoint union appears as we haven't proved yet that none of the tangent orbits $\Theta_{l,\Gr(k,\mathbb C^N)}$ for $l\geq 3$ coincides with a secant orbit: we prove this in Proposition \ref{prop:no collapsing}.

\paragraph{Orbit inclusions.} First, we determine the inclusions among the orbit closures. 

\begin{prop} \label{IncOrb:Sec}
	For any $l=1:k-1$ it holds $\Sigma_{l,\Gr(k,\mathbb C^N)} \subset \overline{\Sigma_{l+1,\Gr(k,\mathbb C^N)}}$.
	\end{prop}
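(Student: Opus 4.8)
The plan is to exhibit $s_l$ as a limit of a one-parameter family of points lying in the orbit $\Sigma_{l+1,\Gr(k,\mathbb C^N)}$, and then to propagate the inclusion along the $\SL(N)$-action. Since $\Sigma_{l+1,\Gr(k,\mathbb C^N)}=\SL(N)\cdot s_{l+1}$ is an $\SL(N)$-orbit (Proposition \ref{prop:partition of secant branch}), its Zariski closure is $\SL(N)$-stable; hence, once we know $s_l\in\overline{\Sigma_{l+1,\Gr(k,\mathbb C^N)}}$, we immediately get $\Sigma_{l,\Gr(k,\mathbb C^N)}=\SL(N)\cdot s_l\subset\overline{\Sigma_{l+1,\Gr(k,\mathbb C^N)}}$.

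For the family, for $t\in\mathbb C$ I would set
\[ q_t:=e_1\wedge\cdots\wedge e_{k-l-1}\wedge\big(e_{k-l}+t\,e_{k+l+1}\big)\wedge e_{k+1}\wedge\cdots\wedge e_{k+l}\ \in\ \Gr(k,\mathbb C^N) \]
(the listed $k$ vectors are linearly independent for every $t$, and the index $k+l+1$ is legitimate because $k+l+1\le 2k\le N$ by $l\le k-1$ and $k\le\frac{N}{2}$), and consider the point $\omega+q_t\in\sigma_2(\Gr(k,\mathbb C^N))$, which is a sum of two Grassmannian points. The two things to verify are: (i) for $t\ne0$ one has $H_\omega\cap H_{q_t}=\langle e_1,\dots,e_{k-l-1}\rangle$, because none of $e_{k-l}+t\,e_{k+l+1},e_{k+1},\dots,e_{k+l}$ lies in $H_\omega=\langle e_1,\dots,e_k\rangle$; by Lemma \ref{distance as intersection} this forces $d(\omega,q_t)=k-(k-l-1)=l+1$, so $\omega+q_t\in\Sigma_{l+1,\Gr(k,\mathbb C^N)}$; and (ii) $q_0=e_1\wedge\cdots\wedge e_{k-l}\wedge e_{k+1}\wedge\cdots\wedge e_{k+l}=\mathbbm{e}_l$, whence $\lim_{t\to0}(\omega+q_t)=\omega+\mathbbm{e}_l=s_l$. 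Combining (i) and (ii) gives $s_l\in\overline{\Sigma_{l+1,\Gr(k,\mathbb C^N)}}$, and we conclude as above.

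I do not expect a real obstacle here; the only subtlety is choosing the right degeneration. The deformation must be a shear that pushes the ``extra'' basis direction $e_{k+l+1}$ of $H_{\mathbbm{e}_l}$ into the common subspace $H_\omega\cap H_{\mathbbm{e}_l}$, so that the secant pair $(\omega,q_t)$ stays at distance exactly $l+1$ while degenerating to $(\omega,\mathbbm{e}_l)$; a crude torus rescaling such as $e_{k+l+1}\mapsto t\,e_{k+l+1}$ would instead collapse $q_t$ to $\omega$ and only produce the trivial inclusion $\Gr(k,\mathbb C^N)\subset\overline{\Sigma_{l+1,\Gr(k,\mathbb C^N)}}$. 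The boundary case $l=k-1$ needs no separate treatment: there $k-l-1=0$, the common factor is empty, and the same computation gives $H_\omega\cap H_{q_t}=\{0\}$ and $d(\omega,q_t)=k=l+1$ for $t\ne0$.
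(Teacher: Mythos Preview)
Your proof is correct and is essentially the same as the paper's: the paper exhibits $s_l$ as the limit, for $\epsilon\to\infty$, of $\omega + e_1\wedge\cdots\wedge e_{k-l-1}\wedge\big(e_{k-l}+\frac{1}{\epsilon}e_{k+l+1}\big)\wedge e_{k+1}\wedge\cdots\wedge e_{k+l}$, which is your family under the reparametrization $t=1/\epsilon$. Your write-up is slightly more detailed (index bounds, the $l=k-1$ boundary, the $\SL(N)$-stability of the closure); the only minor looseness is that ``none of $e_{k-l}+t\,e_{k+l+1},e_{k+1},\dots,e_{k+l}$ lies in $H_\omega$'' should really say that these vectors are linearly independent modulo $H_\omega$, but that is immediate here.
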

\begin{proof}
	Fix $l=1:k-1$. By homogeneity, it is enough to show that a representative of the distance-$l$ orbit lies in the closure of the distance-$(l+1)$ orbit. The representative $s_l\in \Sigma_{l,\Gr(k,\mathbb C^N)}$ is limit for $\epsilon \rightarrow \infty$ of the sequence
	\[\omega + e_1\wedge \ldots \wedge e_{k-l-1}\wedge \left ( e_{k-l}+\frac{1}{\epsilon}e_{k+l+1} \right)\wedge e_{k+1}\wedge \ldots \wedge e_{k+l} \in \Sigma_{l+1,\Gr(k,\mathbb C^N)} \ . \] \vskip-0.5cm
	\end{proof}

\begin{prop}\label{IncOrb:Tang}
	For any $l=1:k-1$ it holds $\Theta_{l,\Gr(k,\mathbb C^N)}\subset \overline{\Theta_{l+1,\Gr(k,\mathbb C^N)}}$.
	\end{prop}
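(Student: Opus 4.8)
The plan is to mimic the strategy of Proposition \ref{IncOrb:Sec}: by homogeneity of the Grassmannian the action of $\SL(N)$ conjugates all tangent spaces, and since $\Theta_{l,\Gr(k,\mathbb C^N)}$ and $\Theta_{l+1,\Gr(k,\mathbb C^N)}$ are single $\SL(N)$-orbits (Proposition \ref{prop:tangent orbits}), it suffices to exhibit the representative $\theta_l \in \Theta_{l,\Gr(k,\mathbb C^N)}$ as a limit of a one-parameter family of points in $\Theta_{l+1,\Gr(k,\mathbb C^N)}$. The cleanest way to do this is to work inside the fixed tangent space $T_{\omega}\Gr(k,\mathbb C^N)$ and use the identification of Remark \ref{Isom:TangSp&Mat} with $(N-k)\times k$ matrices, under which $\theta_l$ corresponds to a rank-$l$ matrix: degenerating a rank-$(l+1)$ matrix to a rank-$l$ one is elementary, e.g. replacing the $(l+1)$-th pivot by $\epsilon$ and letting $\epsilon \to 0$. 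Concretely, I would write
\[
\theta_l^{(\epsilon)} := \theta_l + \epsilon\, e_1 \wedge \dots \wedge e_l \wedge e_{k+l+1} \wedge e_{l+2} \wedge \dots \wedge e_k \ \in \ T_\omega \Gr(k,\mathbb C^N),
\]
which has rank $l+1$ for $\epsilon \neq 0$ and hence lies in $\Theta_{l+1,\Gr(k,\mathbb C^N)}$, while $\theta_l^{(\epsilon)} \to \theta_l$ as $\epsilon \to 0$; this gives $\theta_l \in \overline{\Theta_{l+1,\Gr(k,\mathbb C^N)}}$.

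I need to be slightly careful about two points. First, I must check that $\theta_l^{(\epsilon)}$ genuinely has rank $l+1$ in $\mathbb C^{N-k}\otimes\mathbb C^k$ and not something smaller: this amounts to verifying that the added term involves a basis vector $e_{k+l+1}$ and a "column slot" not already used by $\theta_l$, so that the corresponding matrix is block-diagonal with an extra nonzero entry — here it is essential that $l+1 \leq k$ (so the column index $l+1$ is available) and that $N \geq k+l+1$ (so the row index $k+l+1$ is available), both of which hold for $l \leq k-1$ under the standing assumption $k \leq N/2$. Second, since the isomorphism of Remark \ref{Isom:TangSp&Mat} is $\SL(N)$-equivariant, rank-$(l+1)$ elements of $T_\omega\Gr(k,\mathbb C^N)$ are all conjugate to $\theta_{l+1}$, so $\theta_l^{(\epsilon)}$ indeed lies in the orbit $\Theta_{l+1,\Gr(k,\mathbb C^N)}$ and not merely in its closure.

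The main (very mild) obstacle is purely bookkeeping: choosing the perturbing wedge term so that, written as a matrix, it adds exactly one independent row and column to the matrix of $\theta_l$, and double-checking the sign/index conventions in \eqref{representatives tangent} so that the limit is literally $\theta_l$ on the nose rather than a permuted variant. Alternatively, one can avoid coordinates entirely: viewing $\theta_l$ as a tangent vector at $\omega$ corresponding to a map $\phi_l \colon H_\omega \to \mathbb C^N/H_\omega$ of rank $l$, take any $\phi_{l+1}$ of rank $l+1$ and form the segment $\phi_l + \epsilon(\phi_{l+1}-\phi_l)$, whose rank is $l+1$ for generic small $\epsilon \neq 0$ and which limits to $\phi_l$; equivariance of the identification then finishes the argument. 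I expect the coordinate version to be shorter to write down, so I would present that one.
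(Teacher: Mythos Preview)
Your proposal is correct and follows essentially the same approach as the paper: both arguments use the identification of Remark \ref{Isom:TangSp&Mat} with $(N-k)\times k$ matrices and the elementary fact that rank-$l$ matrices lie in the closure of rank-$(l+1)$ matrices. The paper states this in one line without writing down an explicit family, whereas you spell out the degeneration $\theta_l^{(\epsilon)}$ and verify the index constraints, but the underlying idea is identical.
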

\begin{proof}
	The tangent points in $\Theta_{l,\Gr(k,\mathbb C^N)}$ correspond to $(N-k)\times k$ matrices of rank $l$, while points $\Theta_{l+1,\Gr(k,\mathbb C^N)}$ to $(N-k)\times k$ matrices of rank $l+1$. The thesis follows from the fact that the former matrices lie in the closure of the latter ones.
	\end{proof}

\begin{prop} \label{IncOrc:SecVsTang}
	For any $l=3:k$ it holds $\Theta_{l,\Gr(k,\mathbb C^N)} \subset \overline{\Sigma_{l,\Gr(k,\mathbb C^N)}}$.
	\end{prop}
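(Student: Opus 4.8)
The plan is to exhibit, for each $l=3:k$, an explicit one-parameter family of secant points of Hamming distance $l$ that degenerates to the representative $\theta_l$ of $\Theta_{l,\Gr(k,\mathbb C^N)}$. Since $\overline{\Sigma_{l,\Gr(k,\mathbb C^N)}}$ is closed and $\SL(N)$-stable and $\theta_l$ generates the orbit $\Theta_{l,\Gr(k,\mathbb C^N)}$, producing such a degeneration suffices. Concretely, for $\epsilon\neq 0$ I would consider the Grassmannian point
\[ q_\epsilon := (e_1+\epsilon e_{k+1})\wedge(e_2+\epsilon e_{k+2})\wedge\dots\wedge(e_l+\epsilon e_{k+l})\wedge e_{l+1}\wedge\dots\wedge e_k \ \in\ \Gr(k,\mathbb C^N), \]
which is well defined since $k+l\leq 2k\leq N$.

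First I would check the Hamming distance. A vector of $H_{q_\epsilon}$ lies in $H_\omega=\langle e_1,\dots,e_k\rangle$ exactly when its $e_{k+1},\dots,e_{k+l}$-components vanish, which forces it into $\langle e_{l+1},\dots,e_k\rangle$; hence $\dim(H_\omega\cap H_{q_\epsilon})=k-l$, and by Lemma \ref{distance as intersection} we get $d(\omega,q_\epsilon)=l$ for every $\epsilon\neq 0$. Since the Hamming distance depends only on the underlying subspaces, also $d(q_\epsilon,-\omega)=l$, so the point $p_\epsilon:=q_\epsilon-\omega=q_\epsilon+(-\omega)$ lies in $\Sigma_{l,\Gr(k,\mathbb C^N)}$ for all $\epsilon\neq 0$.

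Next I would expand $q_\epsilon$ in powers of $\epsilon$: multiplying out $\bigwedge_{j=1}^l(e_j+\epsilon e_{k+j})$ and wedging with $e_{l+1}\wedge\dots\wedge e_k$ yields the constant term $\omega$ and linear term $\epsilon\sum_{j=1}^l e_1\wedge\dots\wedge e_{j-1}\wedge e_{k+j}\wedge e_{j+1}\wedge\dots\wedge e_k$, i.e. precisely $\epsilon\,\theta_l$ as in \eqref{representatives tangent}; thus $q_\epsilon=\omega+\epsilon\,\theta_l+O(\epsilon^2)$ and $p_\epsilon=\epsilon\,\theta_l+O(\epsilon^2)$. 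Since the linear term is nonzero, $p_\epsilon\neq 0$ and projectively $[p_\epsilon]=[\theta_l+O(\epsilon)]\to[\theta_l]$ as $\epsilon\to 0$. Therefore $[\theta_l]\in\overline{\Sigma_{l,\Gr(k,\mathbb C^N)}}$, and applying $\SL(N)$ gives $\Theta_{l,\Gr(k,\mathbb C^N)}=\SL(N)\cdot\theta_l\subset\overline{\Sigma_{l,\Gr(k,\mathbb C^N)}}$.

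There is no serious obstacle: the argument is the familiar principle that a tangent direction is a limit of secant directions, made quantitative. The only points requiring care are verifying that the degenerating family has Hamming distance exactly $l$ (not smaller) for every $\epsilon\neq 0$, and that the linear coefficient $\theta_l$ is genuinely nonzero so the projective limit is well defined. The hypothesis $l\geq 3$ plays no role in this proof; it appears only because the cases $l=1,2$ are already covered by the identifications $\Sigma_{1,\Gr(k,\mathbb C^N)}=\Theta_{1,\Gr(k,\mathbb C^N)}$ and $\Sigma_{2,\Gr(k,\mathbb C^N)}=\Theta_{2,\Gr(k,\mathbb C^N)}$ in Remark \ref{rmk on tangent orbits}, while the genuine distinctness of $\Theta_{l,\Gr(k,\mathbb C^N)}$ and $\Sigma_{l,\Gr(k,\mathbb C^N)}$ for $l\geq 3$ is handled separately in Proposition \ref{prop:no collapsing}.
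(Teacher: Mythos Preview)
Your proof is correct and follows the same strategy as the paper: exhibit a one-parameter family in $\Sigma_{l,\Gr(k,\mathbb C^N)}$ degenerating projectively to $\theta_l$, then use $\SL(N)$-invariance of the closure. The only difference is in packaging: the paper acts on the standard representative $s_l=\omega+\mathbbm{e}_l$ by an explicit element $g_\epsilon\in\GL(N)$ and asserts $g_\epsilon\cdot s_l=\epsilon\,\theta_l+\epsilon^2\,\bar t$ after a ``straightforward count'', whereas you keep one summand fixed at $-\omega$ and move the other along $q_\epsilon$, so that the expansion $q_\epsilon-\omega=\epsilon\,\theta_l+O(\epsilon^2)$ is immediate from the very definition of $\theta_l$ as a tangent vector at $\omega$; your variant is slightly cleaner since no group-element bookkeeping or invertibility check is needed.
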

\begin{proof}
Recall the representatives \eqref{representatives secant} and \eqref{representatives tangent} of the orbits $\Sigma_{l,\Gr(k,\mathbb C^N)}$ and $\Theta_{l,\Gr(k,\mathbb C^N)}$, respectively. It is enough to find elements $g_\epsilon \in \GL(N)$ such that $\lim_{\epsilon \to 0} (g_\epsilon \cdot s_l) = \theta_l$. For any $\epsilon>0$ consider the element $g_\epsilon \in GL(N)$ acting as
\[ e_i\ \longmapsto\ e_i + \varepsilon \cdot e_{k+i},\quad e_{k+j}\ \longmapsto\ e_{k+j-l} +\varepsilon^2 \cdot e_{k+j}, \quad e_{k+l}\ \longmapsto\ -e_{k} +\varepsilon^2 \cdot e_{k+l}\]
for any $i=1:l$ and $j=1: l-1$, and as the identity on the other basis vectors: since the images of the basis vectors are all linearly independent, the linear map $g_\epsilon$ actually belongs to $GL(N)$.
From a straightforward count one gets $g_\epsilon \cdot s_l = \epsilon \cdot \theta_l + \epsilon^2 \cdot \overline{t}$ for a suitable $\overline{t} \in \bigwedge^k\mathbb C^N$, hence $\lim_{\epsilon \to 0} (\epsilon^{-1} g_\epsilon \cdot s_l) = \theta_l$, that is the thesis. 
\end{proof}

We complete Remark \ref{rmk on tangent orbits} by showing that for any $l=3:k$ it holds $\Theta_{l,\Gr(k,\mathbb C^N)} \neq \Sigma_{l,\Gr(k,\mathbb C^N)}$. For the {\em dense} case $l=k$ the equality does not hold since for $k\geq 3$ the secant variety is always non-defective and $\tau (\Gr(k,\mathbb C^N))\subsetneq \sigma_2(\Gr(k,\mathbb C^N))$ \cite[Theorem 1.4]{zak1993tangents}. 

\begin{prop}\label{prop:no collapsing}
	For any $3\leq l \leq k-1$ it holds $\Sigma_{l,\Gr(k,\mathbb C^N)}\neq \Theta_{l,\Gr(k,\mathbb C^N)}$.
	\end{prop}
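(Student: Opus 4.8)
The plan is to separate the two orbits by an $\SL(N)$-invariant, namely the dimension of the subspace $H_q := \Ker(\psi_q)$ associated to a point $q$ via the multiplication map \eqref{multiplication map}. Since the construction of $\psi_q$ and hence of $H_q$ is manifestly $\SL(N)$-equivariant, the integer $\dim H_q$ is constant on orbits; so it suffices to compute $\dim H_{s_l}$ and $\dim H_{\theta_l}$ for the chosen representatives and check that they differ for every $3 \leq l \leq k-1$.

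First I would handle the secant representative. By Proposition \ref{prop:intersection subspaces to secant pts} we already know $H_{s_l} = H_\omega \cap H_{\mathbbm e_l} = \langle e_1, \ldots, e_{k-l}\rangle_{\mathbb C}$, so $\dim H_{s_l} = k-l$. Next I would compute $\dim H_{\theta_l}$ directly: writing a general $y = \sum_i \beta_i e_i$ and expanding $y \wedge \theta_l$ in the monomial basis of $\bigwedge^{k+1}\mathbb C^N$, one collects the coefficient conditions. Because $\theta_l$ corresponds, under the isomorphism $T_\omega\Gr(k,\mathbb C^N) \simeq \Hom(H_\omega, \mathbb C^N/H_\omega)$ of Remark \ref{Isom:TangSp&Mat}, to a rank-$l$ matrix — concretely mapping $e_j \mapsto e_{k+j}$ for $j=1:l$ and $e_j \mapsto 0$ for $j=l+1:k$ — the wedge $y\wedge\theta_l$ vanishes exactly when $y$ lies in the span of the last $N-k-l$ standard basis vectors $e_{k+l+1},\ldots,e_N$ together with the $k-l$ "unmoved" vectors $e_{l+1},\ldots,e_k$; in particular the $e_1,\ldots,e_l$ and $e_{k+1},\ldots,e_{k+l}$ directions are forced to vanish by linear independence of the resulting monomials, while $e_{l+1},\dots,e_k$ survive because $e_j\wedge\theta_l=0$ there. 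This gives $\dim H_{\theta_l} = (k-l) + (N-k-l) = N - k - l$, and I would write this expansion out carefully (it is the only genuine computation, and the bookkeeping of which monomials appear is the place where a sign or index slip is easy).

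Comparing, $\dim H_{s_l} = k-l$ versus $\dim H_{\theta_l} = N-k-l$, so the two orbits coincide only if $k = N-k$, i.e. $N = 2k$. Under the running hypothesis $k \leq N/2$ with the degenerate case treated separately, for $N > 2k$ we are done immediately. The remaining boundary case $N = 2k$ needs a separate argument: here I would instead invoke the dimension computation, using that (by the inclusions already established, or by the dense-orbit discussion in Remark \ref{rmk on tangent orbits}) $\Theta_{l,\Gr(k,\mathbb C^N)}$ and $\Sigma_{l,\Gr(k,\mathbb C^N)}$ sit inside $\overline{\Sigma_{l,\Gr(k,\mathbb C^N)}}$, and show their dimensions differ — the tangent orbit being cut out by the extra rank condition on the $(N-k)\times k$ matrix factor, which drops the dimension strictly. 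Alternatively, one can note that a generic point of $\Sigma_{l,\Gr(k,\mathbb C^N)}$ with $l \geq 3$ is identifiable (Theorem \ref{thm:identifiability for secant orbits}), whereas every tangent point lies on a tangent line and hence is a limit of bisecant chords, so is not identifiable — but since that theorem comes later, the cleanest self-contained route is the invariant $\dim H_q$ for $N>2k$ plus a direct dimension count when $N=2k$. The main obstacle is precisely this $N=2k$ edge case, where the natural invariant degenerates and one must fall back on a finer computation.
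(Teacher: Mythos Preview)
Your computation of $H_{\theta_l}$ is incorrect, and this error sinks the whole strategy. Take $y=e_N$ (with $N>k+l$): then $e_N\wedge\theta_l=\sum_{j=1}^l e_N\wedge\theta_{l,j}$ is a sum of $l$ nonzero monomials with pairwise distinct supports $\{1,\ldots,k\}\setminus\{j\}\cup\{k+j,N\}$, hence nonzero. So $e_{k+l+1},\ldots,e_N$ are \emph{not} in $\Ker\psi_{\theta_l}$. Carrying out the bookkeeping correctly one finds $H_{\theta_l}=\langle e_{l+1},\ldots,e_k\rangle_{\mathbb C}$, of dimension $k-l$ --- the paper itself states this for $l=3$ in the proof of Proposition \ref{prop:no collapsing}. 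Since $\dim H_{s_l}=k-l$ as well, the invariant $\dim H_q$ fails to separate $\Sigma_{l}$ from $\Theta_{l}$ for \emph{every} $N$, not only for $N=2k$.

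The paper's proof in fact exploits precisely the equality $H_{\theta_3}=\langle e_4,\ldots,e_k\rangle$ that your approach hoped to avoid: assuming $\theta_3=p+q$ with $d(p,q)=3$, Proposition \ref{prop:intersection subspaces to secant pts} forces $H_p\cap H_q=\langle e_4,\ldots,e_k\rangle$, so after factoring out $e_4\wedge\cdots\wedge e_k$ one reduces to the dense orbits in $\Gr(3,W)$ with $W\simeq\mathbb C^{N-k+3}$, where $\tau\subsetneq\sigma_2$ by non-defectivity. Your proposed fallbacks do not rescue the argument either: the tangent-orbit dimensions (Proposition \ref{prop:tangent dimensions}) are established only later via tangential-identifiability, and the identifiability results of Section \ref{sec:identifiability of secant orbits} likewise come after this proposition, so invoking them here would be circular or a forward reference.
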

\begin{proof}
	By contradiction, we assume that there exists $3\leq l \leq k-1$ such that $\Sigma_{l,\Gr(k,\mathbb C^N)}=\Theta_{l,\Gr(k,\mathbb C^N)}$. Then by Propositions \ref{IncOrb:Sec} and \ref{IncOrb:Tang} we easily get that $\Sigma_{i,\Gr(k,\mathbb C^N)} = \Theta_{i,\Gr(k,\mathbb C^N)}$ for any $2 \leq i \leq l$: in particular, $\Sigma_{3,\Gr(k,\mathbb C^N)} = \Theta_{3,\Gr(k,\mathbb C^N)}$. Thus we assume by contradiction that $\Sigma_{3,\Gr(k,\mathbb C^N)} = \Theta_{3,\Gr(k,\mathbb C^N)}$. By hypothesis it holds $k\geq 4$. Consider the tangent representative of $\Theta_{3,\Gr(k,\mathbb C^N)}$
	\begin{align*}
		\theta_3 & = e_2\wedge \ldots \wedge e_k \wedge e_{k+1} + e_1\wedge e_3\wedge  \ldots \wedge e_k \wedge e_{k+2} + e_1\wedge e_2 \wedge e_4 \ldots \wedge e_k \wedge e_{k+3}\\
		& = e_4\wedge \ldots \wedge e_k \wedge \left( \underbrace{e_2\wedge e_3 \wedge e_{k+1} + e_1\wedge e_3 \wedge e_{k+2} + e_1\wedge e_2 \wedge e_{k+3}}_{=:\eta} \right)  \ .
		\end{align*}
	From the multiplication map $\psi_{\theta_3}$ as in \eqref{multiplication map} we get the subspace $H_{\theta_3}=\Ker(\psi_{\theta_3})=\langle e_4,\ldots, e_k\rangle_{\mathbb C}$ having dimension $\dim H_{\theta_3}=k-3$. Define $W:=\langle e_1,e_2,e_3,e_{k+1}, \ldots, e_{N}\rangle_{\mathbb C}$. \\
	\indent Since $\theta_3\in \Theta_{3,\Gr(k,\mathbb C^N)}=\Sigma_{3,\Gr(3,\mathbb C^N)}$, there exist $p,q \in \Gr(k,\mathbb C^N)$ such that $\theta_3=p+q$ and $d(p,q)=3$. Then, by definition as kernels, one gets $\langle e_4,\ldots, e_k\rangle_{\mathbb C}=H_{\theta_3}=H_{p+q}=H_p\cap H_q$, where the last equality follows from Proposition \ref{prop:intersection subspaces to secant pts}. This implies that we can write $p+q=e_4\wedge \ldots \wedge e_k \wedge \big( a + b \big)$ for a certain $a+b \in \Sigma_{3,\Gr(3,W)}$. Given the multiplication map 
	\[\begin{matrix}
		\mu : & \bigwedge^3W & \longrightarrow & \bigwedge^k\mathbb C^N\\
		& t & \mapsto & e_4\wedge \ldots \wedge e_k \wedge t 
		\end{matrix} \ , \]
	it holds $\mu(\eta)=\theta_3=p+q=\mu(a+b)$, and by injectivity of $\mu$ we get $\eta=a+b$. But $\eta$ is exactly the representative of the orbit $\Theta_{3,\Gr(3,W)}$, while $a+b$ is in the orbit $\Sigma_{3,\Gr(3,W)}$: in particular, it follows that $\tau(\Gr(3,W))=\sigma_2(\Gr(3,W))$ which is a contradiction.
	\end{proof}

\begin{teo}\label{thm:orbit stratification}
The secant variety $\sigma_2(\Gr(k,\mathbb C^N))$ has the following $\SL(N)$-orbit partition
\begin{center}
			\begin{tikzpicture}[scale=3]
			
			\node(S) at (0,0.2){{$\Gr(k,\mathbb C^N)$}};
			\node(t2) at (0,0.6){{$\Theta_{2,\Gr(k,\mathbb C^N)} = \Sigma_{2,\Gr(k,\mathbb C^N)}$}};
			\node(t3) at (-0.6,1){{$\Theta_{3,\Gr(k,\mathbb C^N)}$}};
			\node(t) at (-0.6,1.4){{$\vdots$}};
			\node(td) at (-0.6,1.8){$\Theta_{k,\Gr(k,\mathbb C^N)}$};
			
			\node(s3) at (0.6,1.2){{$\Sigma_{3,\Gr(k,\mathbb C^N)}$}};
			\node(s) at (0.6,1.6){{$\vdots$}};
			\node(sd) at (0.6,2){{$\Sigma_{k,\Gr(k,\mathbb C^N)}$}};
			
			\path[font=\scriptsize,>= angle 90]
			(S) edge [->] node [left] {} (t2)
			(t2) edge [->] node [left] {} (t3)
			(t3) edge [->] node [left] {} (s3)
			(t3) edge [->] node [left] {} (t)
			(t) edge [->] node [left] {} (td)
			(t) edge [->] node [left] {} (s)
			(td) edge [->] node [left] {} (sd)
			(s3) edge [->] node [left] {} (s)
			(s) edge [->] node [left] {} (sd);
			\end{tikzpicture}
	\end{center}
where the arrows denote the inclusion of an orbit into the closure of the other one. In particular, $\Theta_{k,\Gr(k,\mathbb C^N)}$ is the dense orbit in $\tau(\Gr(k,\mathbb C^N))$, and $\Sigma_{k,\Gr(k,\mathbb C^N)}$ is the dense orbit in $\sigma_2(\Gr(k,\mathbb C^N))$.
\end{teo}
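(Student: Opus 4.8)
This statement is the synthesis of everything established in Section~\ref{sec:orbit partition} up to this point, so my plan is to assemble the pieces already in place rather than to prove something genuinely new. I would start from the decomposition $\sigma_2(\Gr(k,\mathbb C^N)) = \sigma_2^\circ(\Gr(k,\mathbb C^N)) \cup \tau(\Gr(k,\mathbb C^N))$ of the secant variety of lines into the locus of points on bisecant lines and the locus of points on tangent lines. Feeding in Proposition~\ref{prop:partition of secant branch} (which gives $\sigma_2^\circ = \bigsqcup_{l=1}^k \Sigma_{l,\Gr(k,\mathbb C^N)}$ with $\Sigma_{l,\Gr(k,\mathbb C^N)} = \SL(N)\cdot s_l$) and Proposition~\ref{prop:tangent orbits} (which gives $\tau = \bigsqcup_{l=1}^k \Theta_{l,\Gr(k,\mathbb C^N)}$ with $\Theta_{l,\Gr(k,\mathbb C^N)} = \SL(N)\cdot\theta_l$), I conclude that the $\SL(N)$-orbits contained in $\sigma_2(\Gr(k,\mathbb C^N))$ are exactly the orbits $\Sigma_{l,\Gr(k,\mathbb C^N)}$ and $\Theta_{l,\Gr(k,\mathbb C^N)}$ for $l=1:k$.

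Next I would separate these orbits using the $\SL(N)$-invariant $\delta(q) := \dim \Ker(\psi_q)$ attached to the multiplication map $\psi_q$ of \eqref{multiplication map}; it is invariant because $\psi_{g\cdot q}$ differs from $\psi_q$ only by the $\GL(N)$-conjugation $q\mapsto g\cdot q$ on the source and $\bigwedge^{k+1}g$ on the target, so $\Ker(\psi_{g\cdot q}) = g\cdot\Ker(\psi_q)$. On $\Gr(k,\mathbb C^N) = \Sigma_{1,\Gr(k,\mathbb C^N)} = \Theta_{1,\Gr(k,\mathbb C^N)}$ one has $\delta \equiv k$; on $\Sigma_{l,\Gr(k,\mathbb C^N)}$ with $l\geq 2$ one has $\delta \equiv k-l$ by Proposition~\ref{prop:intersection subspaces to secant pts} together with Lemma~\ref{distance as intersection}; and on $\Theta_{m,\Gr(k,\mathbb C^N)}$ with $m\geq 2$ one has $\delta \equiv k-m$ by a direct computation on the representative $\theta_m$, exactly as carried out for $m=3$ inside the proof of Proposition~\ref{prop:no collapsing}. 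Hence $\delta$ already distinguishes every pair of these orbits except that $\Sigma_{l,\Gr(k,\mathbb C^N)}$ and $\Theta_{l,\Gr(k,\mathbb C^N)}$ share the same value; for $l=1,2$ they genuinely coincide by Remark~\ref{rmk on tangent orbits}, and for $l\geq 3$ they are distinct by Proposition~\ref{prop:no collapsing}, the dense case $l=k$ being handled instead by $\tau(\Gr(k,\mathbb C^N)) \subsetneq \sigma_2(\Gr(k,\mathbb C^N))$ \cite[Theorem~1.4]{zak1993tangents}. This pins down the nodes of the diagram.

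Finally I would read off the arrows and the density claims directly from the earlier results: the inclusions $\Sigma_{l,\Gr(k,\mathbb C^N)} \subseteq \overline{\Sigma_{l+1,\Gr(k,\mathbb C^N)}}$, $\Theta_{l,\Gr(k,\mathbb C^N)} \subseteq \overline{\Theta_{l+1,\Gr(k,\mathbb C^N)}}$ and $\Theta_{l,\Gr(k,\mathbb C^N)} \subseteq \overline{\Sigma_{l,\Gr(k,\mathbb C^N)}}$ (for $l=3:k$) are Propositions~\ref{IncOrb:Sec}, \ref{IncOrb:Tang} and \ref{IncOrc:SecVsTang}, while $\Sigma_{k,\Gr(k,\mathbb C^N)}$ and $\Theta_{k,\Gr(k,\mathbb C^N)}$ are dense in $\sigma_2(\Gr(k,\mathbb C^N))$ and in $\tau(\Gr(k,\mathbb C^N))$ respectively by Remarks~\ref{rmk on secant orbits} and \ref{rmk on tangent orbits}. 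The proof is therefore essentially bookkeeping: the only ingredient that is not a verbatim appeal to an earlier result is the underlying decomposition $\sigma_2 = \sigma_2^\circ \cup \tau$ — equivalently, that the boundary points of the bisecant-line locus are all tangent points — which is the standard argument that a limit of bisecant lines of $\Gr(k,\mathbb C^N)$ degenerates either to a bisecant line or to a tangent line; I expect this, together with the harmless generalization from $m=3$ to arbitrary $m$ of the value of $\delta$ on $\Theta_{m,\Gr(k,\mathbb C^N)}$, to be the only points requiring any care.
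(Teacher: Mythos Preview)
Your proposal is correct and matches the paper's approach: Theorem~\ref{thm:orbit stratification} carries no separate proof in the paper, being presented as the assembly of Propositions~\ref{prop:partition of secant branch}, \ref{prop:tangent orbits}, \ref{IncOrb:Sec}, \ref{IncOrb:Tang}, \ref{IncOrc:SecVsTang}, \ref{prop:no collapsing} together with Remarks~\ref{rmk on secant orbits} and \ref{rmk on tangent orbits}. Your explicit isolation of the invariant $\delta(q)=\dim\Ker(\psi_q)$ to separate the orbits is a welcome clarification of what the paper leaves implicit (the paper only treats the case $\Sigma_l$ versus $\Theta_l$ in Proposition~\ref{prop:no collapsing}, but the same kernel computation there, combined with Proposition~\ref{prop:intersection subspaces to secant pts}, rules out $\Theta_l=\Sigma_m$ for $l\neq m$ as well).
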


\paragraph{Orbit dimensions.} Since $k\geq3$, the secant variety is non-defective and the tangential variety has codimension $1$, thus the corresponding dense orbits have dimensions 
\[ \dim \Sigma_{k,\Gr(k,\mathbb C^N)}=\dim \sigma_2(\Gr(k,\mathbb C^N))= 2k(N-k)+1 \ ,\]
\[ \dim \Theta_{k,\Gr(k,\mathbb C^N)}=\dim \tau(\Gr(k,\mathbb C^N))= 2k(N-k) \ .\]


In the following we determine the dimensions of the secant orbits, while we postpone the computation for the tangent orbits in Section \ref{sec:cactus} as we use results about tangential-identifiability (cf. Proposition \ref{prop:tangent dimensions}). 

\begin{prop}\label{prop:secant dimensions}
	For $l=2:k-1$, the distance-$l$ secant orbit $\Sigma_{l,\Gr(k,\mathbb C^N)}$ has dimension
	\[\dim\Sigma_{l,\Gr(k,\mathbb C^N)}=\begin{cases}
	k(N-k)+l(N-l)-3 & \text{for } l=2 \\
	k(N-k)+l(N-l)+1 & \text{for } l\geq 3 \ . 
	\end{cases}\]
	In particular, for any $l\geq 3$ the closure $\overline{\Theta_{l,\Gr(k,\mathbb C^N)}}$ is a codimension $1$ subvariety of $\overline{\Sigma_{l,\Gr(k,\mathbb C^N)}}$.
\end{prop}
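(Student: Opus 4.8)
The plan is to compute the dimension of each secant orbit $\Sigma_{l,\Gr(k,\mathbb C^N)}$ as $\dim\SL(N) - \dim\Stab_{\SL(N)}(s_l)$, since the orbit is $\SL(N)\cdot s_l$ by Proposition \ref{prop:partition of secant branch}. Equivalently, and perhaps more tractably, I would compute the dimension of the tangent space to the orbit at the representative $s_l$, i.e. the image of the infinitesimal action $\mathfrak{sl}(N)\to \bigwedge^k\mathbb C^N$, $A\mapsto A\cdot s_l$, where $A$ acts on $s_l$ as a derivation. The kernel of this map is the Lie algebra of the stabilizer, so $\dim\Sigma_{l} = (N^2-1) - \dim\{A\in\mathfrak{sl}(N) : A\cdot s_l = 0\}$. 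So the real task reduces to describing the stabilizer subalgebra of $s_l = e_1\wedge\cdots\wedge e_{k-l}\wedge(e_{k-l+1}\wedge\cdots\wedge e_k + e_{k+1}\wedge\cdots\wedge e_{k+l})$ inside $\mathfrak{gl}(N)$ (then subtract $1$ for the trace condition, but I must be careful: scalars act nontrivially on $s_l$ since it is not invariant, so in fact $\dim\Stab_{\GL(N)} = \dim\Stab_{\SL(N)}+1$ and working in $\GL(N)$ is cleaner—I'd then use $\dim\Sigma_l = (N^2) - \dim\Stab_{\GL(N)}(s_l)$... no: $\GL(N)\cdot s_l$ as a cone has the same dimension as the projective orbit plus one, but we're looking at the affine cone representative, so I'd just track this bookkeeping carefully and cross-check with the known value at $l=k$).

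**The structure I would follow.** First, block-decompose $\mathbb C^N$ according to the weight structure of $s_l$: the "old" coordinates $e_1,\dots,e_{k-l}$ (common part), the block $e_{k-l+1},\dots,e_k$, the block $e_{k+1},\dots,e_{k+l}$, and the complementary space $e_{k+l+1},\dots,e_N$. Writing a general $A\in\mathfrak{gl}(N)$ in these $4\times 4$ blocks, I would impose $A\cdot s_l = 0$, where $A\cdot s_l = \sum (A e_i)\wedge(\text{rest})$ for each $e_i$ appearing. Because $s_l$ factors as (wedge of the common part) $\wedge(a+b)$ with $a = e_{k-l+1}\wedge\cdots\wedge e_k$, $b = e_{k+1}\wedge\cdots\wedge e_{k+l}$ living in a quotient where $a+b$ is the generic/dense tangent... actually $a+b$ lies in $\Sigma_{l,\Gr(l,V')}$, the dense orbit, so I can invoke Lemma \ref{lemma:intersection subspaces to generic secant pts} and Proposition \ref{prop:intersection subspaces to secant pts} to control which vectors kill $s_l$ by wedging. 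The annihilator computation then splits into: (i) the part of $A$ acting within/among the three "active" blocks of total dimension $k+l$, which is governed by the stabilizer of the dense secant point in $\bigwedge^k\mathbb C^{k+l}$ (or rather $a+b$ in $\bigwedge^l$ of an $l+l$-dimensional space)—a computation one does once; (ii) the "off-diagonal" parts mapping the complementary block $\langle e_{k+l+1},\dots,e_N\rangle$ into the active blocks and vice versa, plus the free $\mathfrak{gl}$ on the complement; these contribute the $N$-dependent terms $k(N-k)+l(N-l)$ up to a constant.

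**The main obstacle** I anticipate is handling the interaction between the two "diagonal" blocks $B_2=\langle e_{k-l+1},\dots,e_k\rangle$ and $B_3=\langle e_{k+1},\dots,e_{k+l}\rangle$ through the summand structure $a+b$: a map sending $B_2\to B_3$ (or mixing them) changes $a$ and $b$ in a correlated way, and one must solve the resulting linear system on these $l\times l$ blocks exactly—this is where the "$-3$ versus $+1$" dichotomy between $l=2$ and $l\ge 3$ must emerge, presumably because for $l=2$ the relevant small Grassmannian $\Gr(2,\mathbb C^4)$ is the defective one (a quadric fourfold in $\mathbb P^5$) whose secant variety fills the ambient space, giving an extra $4$ dimensions of stabilizer compared to the naive count. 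I would therefore treat $l=2$ separately (cross-checking against the defective toy-case section and the known defect $4$ for $\Gr(2,\cdot)$), and for $l\ge 3$ use non-defectivity of $\sigma_2(\Gr(l,\mathbb C^{2l}))$ together with Lemma \ref{lemma:intersection subspaces to generic secant pts}. Finally I would verify the formula at the endpoint $l=k$ against the already-established $\dim\Sigma_{k}=2k(N-k)+1$ (indeed $k(N-k)+k(N-k)+1$ matches for $l=k\ge 3$), and deduce the codimension-$1$ statement for $\overline{\Theta_l}\subset\overline{\Sigma_l}$ by comparing with the tangent-orbit dimension $\dim\Theta_l = k(N-k)+l(N-l)$ obtained in Section \ref{sec:cactus}, using the inclusion $\Theta_l\subset\overline{\Sigma_l}$ from Proposition \ref{IncOrc:SecVsTang}.
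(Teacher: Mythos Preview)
Your approach is sound but differs from the paper's. You propose an orbit--stabilizer computation: block-decompose $\mathfrak{gl}(N)$ according to the splitting $\mathbb C^N = B_1\oplus B_2\oplus B_3\oplus B_4$ induced by $s_l$, and solve $A\cdot s_l=0$ block by block. The paper instead constructs a surjective fibration
\[
\xi:\Sigma_{l,\Gr(k,\mathbb C^N)}\longrightarrow \Gr(k-l,\mathbb C^N),\qquad p+q\mapsto H_p\cap H_q,
\]
well-defined by Proposition~\ref{prop:intersection subspaces to secant pts}, and identifies the fibre over $E_l=\langle e_1,\dots,e_{k-l}\rangle$ with the \emph{dense} orbit $\Sigma_{l,\Gr(l,W)}\subset\sigma_2(\Gr(l,W))$ for $W\simeq\mathbb C^{N-k+l}$. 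The fibre dimension is then read off directly: $4(N-k)-3$ for $l=2$ (defective case) and $2l(N-k)+1$ for $l\ge3$ (non-defective), and one adds $\dim\Gr(k-l,\mathbb C^N)=(k-l)(N-k+l)$. Your intuition that the $-3$/$+1$ dichotomy stems from the defectivity of $\Gr(2,\mathbb C^4)$ is exactly right and is what drives the paper's split too; but the paper packages it as ``fibre = dense orbit in a smaller secant variety,'' avoiding any explicit $16$-block linear system. Your method would work and has the advantage of being self-contained (no auxiliary fibration), but the paper's route is shorter and makes the reduction to the already-known dense case $\Sigma_{l,\Gr(l,\cdot)}$ transparent. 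One small correction to your sketch: the relevant small Grassmannian in the reduction is $\Gr(l,\mathbb C^{N-k+l})$, not $\Gr(l,\mathbb C^{2l})$; the complementary block $B_4$ is part of the quotient $W$, not separated from it.
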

\begin{proof}
	Fix $l=2:k-1$. Consider the fibration
	\[\begin{matrix}
	\xi: & \Sigma_{l,\Gr(k,\mathbb C^N)} & \longrightarrow & \Gr(k-l,\mathbb C^N)\\
	& p+q & \mapsto & H_p\cap H_q
	\end{matrix}\]
	which is well-defined by Proposition \ref{prop:intersection subspaces to secant pts}. Given $\omega, \mathbbm{e}_l$ as in \eqref{representatives secant}, we have $E_l:=H_\omega \cap H_{\mathbbm{e}_l}=\langle e_1,\ldots, e_{k-l}\rangle_{\mathbb C} \in \Gr(k-l,\mathbb C^N)$ and we define $W:=\langle e_{k-l+1},\ldots,e_{N}\rangle_{\mathbb C}\simeq \mathbb C^{N-k+l}$. Then, following the same arguments in the proof of Proposition \ref{prop:no collapsing}, one gets the fibre
	\begin{align*}
	\xi^{-1}(E_l) & = \left\{ p+q \in \Sigma_{l,\Gr(k,\mathbb C^N)} \ | \ H_p\cap H_q=E_l\right\}\\
	& = \left\{ e_1\wedge \ldots \wedge e_{k-l}\wedge (a+b) \ | \ a,b \in \Gr(l,W), \ d(a,b)=l\right\}\\
	& \simeq \Sigma_{l,\Gr(l,W)}
	\end{align*}
	where the last isomorphism is given by restriction of the multiplication map 
	\[e_1\wedge \ldots \wedge e_{k-l}\wedge \bullet : \bigwedge^lW \longrightarrow \bigwedge^k\mathbb C^N \ . \]
	In particular, $\Sigma_{l,\Gr(l,W)}$ is the dense orbit in the secant variety $\sigma_2(\Gr(l,W))$, thus
	\[ \dim \xi^{-1}(E_l) = \begin{cases}
	2 \dim\Gr(2,W) -3 = 4(N-k)-3 & \text{for } l=2\\
	2 \dim \Gr(l,W)+1 = 2l(N-k) +1 & \text{for } l\geq 3 \ .
	\end{cases}\]
	From the fibre dimension theorem we conclude that 
	\[
	\dim \Sigma_{l,\Gr(k,\mathbb C^N)} = \dim \Gr(k-l,\mathbb C^N)+ \dim \xi^{-1}(E_l) = \begin{cases}
	k(N-k) + 2(N-2) - 3 & \text{for } l=2\\
	k(N-k) + l(N-l) +1 & \text{for } l\geq 3 \ .
	\end{cases}
	\]\vskip-0.2cm
\end{proof}

\section{Identifiability in $\sigma_2\big(\Gr(k,\mathbb C^N)\big)$}\label{sec:identifiability of secant orbits}

\indent \indent  Fix $3\leq k\leq \frac{N}{2}$ and $\Gr(k,\mathbb C^N)$. In this section we prove that the orbit $\Sigma_{2,\Gr(k,\mathbb C^N)}$ is unidentifiable while the orbits $\Sigma_{l,\Gr(k,\mathbb C^N)}$ for $l=3:k$ are identifiable. We refer to \cite[Theorem 1.1]{bernardi2018new} for the most updated results about generic identifiability for skew-symmetric tensors when $N\leq 14$.

\paragraph{Unidentifiability of $\Sigma_{2,\Gr(k,\mathbb C^N)}$.} The unidentifiability of the distance-$2$ orbit $\Sigma_{2,\Gr(k,\mathbb C^N)}$ is a consequence of the fact that the Grassmannian $\Gr(2,\mathbb C^4)$ is just a quadric in $\mathbb P^5$. It is clear if one considers the representative
\[ \omega + \mathbbm{e}_2= e_1\wedge \ldots \wedge e_{k-2}\wedge \big(e_{k-1}\wedge e_{k} + e_{k+1}\wedge e_{k+2}\big) \ \in \Sigma_{2,\Gr(k,\mathbb C^N)} \ ,\]
and notices that the sum in the round brackets actually is a sum of two points lying on a quadric in $\mathbb P(\bigwedge^2\langle e_{k-1},e_k,e_{k+1},e_{k+2}\rangle_{\mathbb C})$, which is unidentifiable: for instance, another decomposition is
\[ e_{k-1}\wedge e_k+e_{k+1}\wedge e_{k+2}=e_{k-1}\wedge (e_k+e_{k+1}) + e_{k+1}\wedge (e_{k+2} + e_{k-1}) \ .\]
Thus $\omega +\mathbbm{e}_2$ is unidentifiable too, hence the orbit $\Sigma_{2,\Gr(k,\mathbb C^N)}$ is so. Moreover, the dimension of the decomposition locus of $e_{k-1}\wedge e_k+e_{k+1}\wedge e_{k+2}$ in $\Sigma_{2,\Gr(2,W)}\subset \sigma_2^\circ(\Gr(2,W))$ is equal to the defect of $\sigma_{2}(\Gr(2,W))$ which is $4$, since $\dim \sigma_2(\Gr(2,W)))=(2\dim \Gr(2,W)+1)-4$.

\begin{cor}\label{cor:decomposition locus Sigma_2}
	The distance-$2$ orbit $\Sigma_{2,\Gr(k,\mathbb C^N)}$ is unidentifiable. Moreover, the decomposition locus of any point in $\Sigma_{2,\Gr(k,\mathbb C^N)}$ has dimension $4$.
	\end{cor}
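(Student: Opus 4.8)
The plan is to reduce everything to the single representative
$\omega+\mathbbm{e}_2 = e_1\wedge\dots\wedge e_{k-2}\wedge(e_{k-1}\wedge e_k+e_{k+1}\wedge e_{k+2})$,
which is legitimate because both identifiability and the isomorphism type of the decomposition locus are $\SL(N)$-invariant (the action permutes decompositions). For the unidentifiability part I would simply exhibit a second decomposition: inside $\bigwedge^2\langle e_{k-1},e_k,e_{k+1},e_{k+2}\rangle_{\mathbb C}$ one has the identity $e_{k-1}\wedge e_k+e_{k+1}\wedge e_{k+2}=e_{k-1}\wedge(e_k+e_{k+1})+e_{k+1}\wedge(e_{k+2}+e_{k-1})$, and wedging it with $e_1\wedge\dots\wedge e_{k-2}$ produces a decomposition of $\omega+\mathbbm{e}_2$ distinct from $(\omega,\mathbbm{e}_2)$. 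Conceptually this is nothing but the fact that $\Gr(2,\mathbb C^4)$ is a smooth quadric in $\mathbb P^5$, so every point off it lies on a $2$-dimensional family of secant lines.

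For the dimension statement I would first show that every decomposition of $\omega+\mathbbm{e}_2$ actually comes from $\Gr(2,\mathbb C^4)$. If $p+q$ is such a decomposition, then $d(p,q)\ne 0,1$ because $\omega+\mathbbm{e}_2$ has skew-symmetric rank exactly $2$ (the orbit $\Sigma_{2,\Gr(k,\mathbb C^N)}$ is disjoint from the Grassmannian), and $d(p,q)\not\ge 3$ because the $\Sigma_{l,\Gr(k,\mathbb C^N)}$ are pairwise disjoint orbits by Proposition \ref{prop:partition of secant branch} while $\omega+\mathbbm{e}_2\in\Sigma_{2,\Gr(k,\mathbb C^N)}$. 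Hence $d(p,q)=2$, and then Proposition \ref{prop:intersection subspaces to secant pts} forces $H_p\cap H_q=H_{\omega+\mathbbm{e}_2}=\langle e_1,\dots,e_{k-2}\rangle_{\mathbb C}$; factoring out this common subspace and using the injectivity of $e_1\wedge\dots\wedge e_{k-2}\wedge\bullet$, exactly as in the proof of Proposition \ref{prop:no collapsing}, identifies the decomposition locus of $\omega+\mathbbm{e}_2$ with that of $e_{k-1}\wedge e_k+e_{k+1}\wedge e_{k+2}$ inside $\Gr(2,\mathbb C^4)$.

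Finally I would compute the dimension of the decomposition locus of a skew-symmetric rank-$2$ element of $\bigwedge^2\mathbb C^4$ via the abstract secant variety $\{(p_1,p_2,p)\ |\ p_i\in X,\ p\in\langle p_1,p_2\rangle\}$: for generic $p\in\sigma_2(X)$ the fibre of its projection to $\sigma_2(X)$ has dimension $(2\dim X+1)-\dim\sigma_2(X)$ and is a finite ($2{:}1$) cover of the decomposition locus. For $X=\Gr(2,\mathbb C^4)$ this gives $9-5=4$, the defect already recorded in Section \ref{sec:defective}; since $\SL(4)$ acts transitively on the rank-$2$ points of $\mathbb P(\bigwedge^2\mathbb C^4)$ this value holds for every such point, in particular for $e_{k-1}\wedge e_k+e_{k+1}\wedge e_{k+2}$, and hence for every point of $\Sigma_{2,\Gr(k,\mathbb C^N)}$ by homogeneity. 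I expect the only genuinely delicate step to be the reduction in the second paragraph — excluding "exotic" decompositions of $\omega+\mathbbm{e}_2$ whose summands intersect in a subspace strictly smaller than $\langle e_1,\dots,e_{k-2}\rangle_{\mathbb C}$ — which is exactly what Propositions \ref{prop:partition of secant branch} and \ref{prop:intersection subspaces to secant pts} are there to control.
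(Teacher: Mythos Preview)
Your proof is correct and follows essentially the same approach as the paper: reduce to the representative $\omega+\mathbbm{e}_2$, observe that the bracketed factor lives on the quadric $\Gr(2,\mathbb C^4)\subset\mathbb P^5$, and read off the dimension of the decomposition locus from the defect of $\sigma_2(\Gr(2,\mathbb C^4))$. If anything you are more careful than the paper, which states the dimension claim without explicitly arguing (as you do via Propositions \ref{prop:partition of secant branch} and \ref{prop:intersection subspaces to secant pts}) that \emph{every} decomposition of $\omega+\mathbbm{e}_2$ must factor through $e_1\wedge\dots\wedge e_{k-2}$; the one small step you leave implicit is that the resulting pair $(a,b)\in\Gr(2,\mathbb C^{N-k+2})$ actually lands in $\Gr(2,\langle e_{k-1},e_k,e_{k+1},e_{k+2}\rangle_{\mathbb C})$, which follows immediately from comparing the (4-dimensional) column space of the rank-$4$ skew matrix $e_{k-1}\wedge e_k+e_{k+1}\wedge e_{k+2}$ with $H_a+H_b$.
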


\paragraph{Identifiability of $\Sigma_{l,\Gr(k,\mathbb C^N)}$ for $l\geq 3$.} First we prove that the dense $\Sigma_{k,\Gr(k,\mathbb C^N)}$ is identifiable for $k\geq 3$, and then we conclude the identifiability of any orbit $\Sigma_{l,\Gr(k,\mathbb C^N)}$ for $l\geq 3$.

\begin{lemma}\label{lemma:generic identifiability}
	For any $k\geq 3$, the dense orbit $\Sigma_{k,\Gr(k,\mathbb C^N)}$ is identifiable. In particular, the secant variety $\sigma_2(\Gr(k,\mathbb C^N))$ is generically identifiable.
	\end{lemma}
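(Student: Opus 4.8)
The plan is to reduce the generic identifiability of $\sigma_2(\Gr(k,\mathbb C^N))$ to the identifiability of the single representative $s_k = \omega + \mathbbm e_k = e_1\wedge\cdots\wedge e_k + e_{k+1}\wedge\cdots\wedge e_{2k}$ of the dense orbit $\Sigma_{k,\Gr(k,\mathbb C^N)}$, since identifiability is an orbit-invariant property by the remarks following Definition~\ref{def:cactus}. So it suffices to show: if $[p']+[q'] = [s_k]$ with $[p'],[q']\in\Gr(k,\mathbb C^N)$, then $\{[p'],[q']\}=\{[\omega],[\mathbbm e_k]\}$.

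First I would use the subspace $H_{s_k} = \Ker(\psi_{s_k})$ attached to $s_k$ via the multiplication map \eqref{multiplication map}. By Lemma~\ref{lemma:intersection subspaces to generic secant pts} we have $H_{s_k}=\{0\}$; on the other hand, for any decomposition $s_k = p'+q'$ with $p',q'$ decomposable, Proposition~\ref{prop:intersection subspaces to secant pts} (in its form for the dense orbit, $l=k$) forces $H_{p'}\cap H_{q'}\subseteq H_{s_k}=\{0\}$, so $H_{p'}\cap H_{q'}=\{0\}$ and hence $H_{p'}\oplus H_{q'}$ is a $2k$-dimensional subspace $U\subset\mathbb C^N$. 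Moreover $s_k=p'+q'\in\bigwedge^k U$, and comparing with the original expression $s_k=\omega+\mathbbm e_k$, whose own span of coordinate vectors is $\langle e_1,\dots,e_{2k}\rangle=:U_0$, one sees that $s_k$ genuinely lives in $\bigwedge^k U_0$ and in $\bigwedge^k U$; intersecting, $s_k\in\bigwedge^k(U\cap U_0)$, and since $s_k$ has "full support" $U=U_0=\langle e_1,\dots,e_{2k}\rangle$. Thus we have reduced to the case $N=2k$, i.e. to showing that $s_k=e_1\wedge\cdots\wedge e_k+e_{k+1}\wedge\cdots\wedge e_{2k}$ has, up to scalars, the unique decomposition $\{[\omega],[\mathbbm e_k]\}$ inside $\Gr(k,\mathbb C^{2k})$.

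For the reduced problem I would argue directly. Suppose $s_k=\lambda\, a_1\wedge\cdots\wedge a_k + \mu\, b_1\wedge\cdots\wedge b_k$ with $A:=\langle a_i\rangle$, $B:=\langle b_i\rangle$ two $k$-planes in $\mathbb C^{2k}$ with $A\cap B=\{0\}$ (by the kernel argument above), so $\mathbb C^{2k}=A\oplus B$. Write everything in a basis adapted to $A\oplus B$: the two summands $\omega=e_1\wedge\cdots\wedge e_k$ and $\mathbbm e_k=e_{k+1}\wedge\cdots\wedge e_{2k}$ are the two "pure" elements living in $\bigwedge^k\langle e_1,\dots,e_k\rangle$ and $\bigwedge^k\langle e_{k+1},\dots,e_{2k}\rangle$. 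The key linear-algebra fact is that a nonzero decomposable vector in $\bigwedge^k\mathbb C^{2k}$ lying in the plane $\langle\omega,\mathbbm e_k\rangle$ must be a scalar multiple of $\omega$ or of $\mathbbm e_k$: indeed a general combination $\alpha\,\omega+\beta\,\mathbbm e_k$ with $\alpha,\beta\neq0$ satisfies $(\alpha\omega+\beta\mathbbm e_k)\wedge(\alpha\omega+\beta\mathbbm e_k)=2\alpha\beta\,\omega\wedge\mathbbm e_k\neq 0$ in $\bigwedge^{2k}\mathbb C^{2k}$ when $k$ is... — more robustly, one checks decomposability via the Plücker relations (or via $H_{\alpha\omega+\beta\mathbbm e_k}=\Ker\psi=\{0\}$ for $\alpha,\beta\ne0$, whereas a decomposable $k$-vector has $k$-dimensional kernel), which is violated unless $\alpha\beta=0$. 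Since $[a_1\wedge\cdots\wedge a_k]$ and $[b_1\wedge\cdots\wedge b_k]$ both lie on the pencil $\langle\omega,\mathbbm e_k\rangle$ and are decomposable, each is proportional to $\omega$ or to $\mathbbm e_k$; as they are distinct points spanning the pencil, $\{[A],[B]\}=\{[\omega],[\mathbbm e_k]\}$, giving identifiability of $s_k$ and hence of the whole dense orbit, so $\sigma_2(\Gr(k,\mathbb C^N))$ is generically identifiable.

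The main obstacle I expect is the "support reduction" step — making rigorous that an arbitrary decomposition $s_k=p'+q'$ forces the ambient $2k$-plane $H_{p'}\oplus H_{q'}$ to coincide with $\langle e_1,\dots,e_{2k}\rangle$ rather than merely to contain $s_k$ in some unexpected way; this is where one must combine the kernel computation of Proposition~\ref{prop:intersection subspaces to secant pts} with a "minimal support" argument for $s_k$ in $\bigwedge^k\mathbb C^N$. Once one is safely inside $\bigwedge^k\mathbb C^{2k}$ with $A\oplus B=\mathbb C^{2k}$, the remaining pencil argument is short. An alternative route for the reduced case, if the elementary Plücker computation is unpleasant, is to invoke Proposition~\ref{prop:intersection subspaces to secant pts} together with a dimension count on $Dec_{\Gr(k,\mathbb C^{2k})}(s_k)$ showing it is $0$-dimensional and then rigid, but I would prefer the direct pencil argument as it yields uniqueness outright.
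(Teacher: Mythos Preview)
Your support reduction to $\mathbb C^{2k}$ is fine (and your claim $\bigwedge^k U\cap\bigwedge^k U_0=\bigwedge^k(U\cap U_0)$ is in fact correct, so this step is not the obstacle you feared). The genuine gap is in the pencil argument that follows. You correctly observe that on the line $\langle\omega,\mathbbm e_k\rangle\subset\mathbb P(\bigwedge^k\mathbb C^{2k})$ the only decomposable points are $[\omega]$ and $[\mathbbm e_k]$. But from $s_k=\lambda\,a_1\wedge\cdots\wedge a_k+\mu\,b_1\wedge\cdots\wedge b_k$ you only know that $[s_k]$ lies on the line $\langle[A],[B]\rangle$; you have \emph{no} reason to assert that $[A]$ and $[B]$ lie on the line $\langle\omega,\mathbbm e_k\rangle$. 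Asserting that the two secant lines through $[s_k]$ coincide is exactly the identifiability statement you are trying to prove, so the argument is circular at this point. (Your alternative sketch at the end, showing $Dec$ is $0$--dimensional ``and then rigid'', does not close the gap either: zero-dimensionality does not by itself rule out a finite set with more than one point.)

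By contrast, the paper extracts an invariant of $p+q$ that actually pins down the unordered pair $\{H_p,H_q\}$, not just the sum $H_p\oplus H_q$. Concretely, it uses the multiplication map $(p+q)\wedge\bullet:\bigwedge^2V\to\bigwedge^{k+2}V$ and shows that its kernel equals $H_p\wedge H_q$; it then argues that from the subspace $H_p\wedge H_q\subset\bigwedge^2V$ (together with $H_p\oplus H_q$) the two factors $H_p,H_q$ are uniquely determined when $k\geq 3$. This is the missing ingredient in your approach: you need some construction from $s_k$ alone that separates $H_{p'}$ from $H_{q'}$ inside $H_{p'}\oplus H_{q'}=U_0$, rather than only recovering their direct sum. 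If you want to repair your argument along elementary lines inside $\mathbb C^{2k}$, one option is exactly this: compute $\ker\big(s_k\wedge\bullet:\bigwedge^2\mathbb C^{2k}\to\bigwedge^{k+2}\mathbb C^{2k}\big)=H_\omega\wedge H_{\mathbbm e_k}$ and show that for any competing decomposition the same kernel forces $\{H_{p'},H_{q'}\}=\{H_\omega,H_{\mathbbm e_k}\}$.
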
	

\begin{proof}
	Consider the multiplication map $\bigwedge^kV \otimes \bigwedge^2V \longrightarrow \bigwedge^{k+2}V$ defined on decomposable elements as $x\otimes y \mapsto x\wedge y$ and extended by linearity. Given a general secant point $p+q \in \Sigma_{k,\Gr(k,\mathbb C^N)}$, the induced multiplication map $(p+q)\wedge \bullet: \bigwedge^2V \longrightarrow \bigwedge^{k+2}V$ has kernel 
	\begin{align*} 
	\ker((p+q)\wedge \bullet) = \left\{ \sum_i v_i\wedge w_i \ | \ \sum_i p\wedge v_i\wedge w_i = - \sum_i q\wedge v_i\wedge w_i\right\} = H_p \wedge H_q 
	\end{align*}
	where the last equality follows from the fact that $d(p,q)=k$, that is $H_p\cap H_q=0$. Then, given the point $p+q$, the subspaces $H_p$ and $H_q$ (hence $p$ and $q$) can be recovered in a unique way as follows. \\
	First, one recovers $H_p\oplus H_q$ as kernel of $V \wedge \ker((p+q)\wedge \bullet) \rightarrow \bigwedge^3V$.\\
	Next, let $p'+q'=p+q$ be another decomposition: since  $H_{p'}\cap H_{q'}\subset H_{p'+q'}=H_{p+q}=H_p\cap H_q=\{0\}$, it holds $d(p',q')=k$. Clearly, as kernel of a multiplication map with respect to $p+q=p'+q'$, it holds $H_p\wedge H_q=H_{p'}\wedge H_{q'}$, as well as $H_p\oplus H_q=H_{p'}\oplus H_{q'}$. In particular, for any $v \in H_p$ it must hold either $v \in H_{p'}$ or $v \in H_{q'}$ (similar for $w\in H_q$): indeed, if $v=v_{p'}+v_{q'}$ and $w=w_{p'}+w_{q'}$ in $H_{p'}\oplus H_{q'}$ for some $v_{p'},v_{q'},w_{p'},w_{q'}\neq 0$, then $0=(v\wedge w)\wedge (p+w)= (v_{p'}\wedge w_{p'}  + v_{p'}\wedge w_{q'} + v_{q'}\wedge w_{p'} + v_{q'}\wedge w_{q'}) \wedge (p'+q')= v_{p'}\wedge w_{p'}\wedge q' + v_{q'}\wedge w_{q'}\wedge p'$, leading to a contradiction since $\dim H_{p'}=\dim H_{q'}=k\geq 3$ (together with $H_{p'}\cap H_{q'}=\{0\}$) implies $\langle H_{p'}, v_{q'},w_{q'}\rangle \neq \langle H_{q'},v_{p'},w_{p'} \rangle$.   \\
	Finally, assume by contradiction that $\{H_p,H_q\}\neq \{H_{p'},H_{q'}\}$, that is there exist $v_1, v_2\in H_p$ such that $v_1\in H_{p'}$ and $v_2\in H_{q'}$, and similarly $w_1,w_2\in H_q$ such that $w_1\in H_{p'}$ and $w_2\in H_{q'}$.Then one gets 
	\begin{align*}
		0 & =(v_1\wedge w_1 + v_2\wedge w_2)\wedge (p+q) =(v_1\wedge w_1 + v_2\wedge w_2)\wedge (p'+q')\\
		& = v_1\wedge w_1\wedge p' + v_2\wedge w_2 \wedge q' \neq 0 \ ,
		\end{align*}
	hence a contradiction. We conclude that $\{H_p,H_q\}=\{H_{p'},H_{q'}\}$, hence $\{p,q\}=\{p',q'\}$.
	\end{proof}

\begin{teo}\label{thm:identifiability for secant orbits}
	For any $k \geq 3$ and any $3 \leq l \leq k$, the secant orbit $\Sigma_{l,\Gr(k,\mathbb C^N)}$ is identifiable.
	\end{teo}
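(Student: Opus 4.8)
The plan is to reduce the identifiability of an arbitrary secant orbit $\Sigma_{l,\Gr(k,\mathbb C^N)}$ with $l\geq 3$ to the already-established generic case (Lemma \ref{lemma:generic identifiability}), by the same fibration trick used in Proposition \ref{prop:secant dimensions} and Proposition \ref{prop:no collapsing}. First I would fix $3\le l\le k-1$ (the case $l=k$ being exactly Lemma \ref{lemma:generic identifiability}) and take the representative $s_l=\omega+\mathbbm e_l\in\Sigma_{l,\Gr(k,\mathbb C^N)}$; since identifiability is an $\SL(N)$-invariant property, it suffices to show $s_l$ is identifiable. Write $p=\omega$, $q=\mathbbm e_l$, so $E:=H_p\cap H_q=\langle e_1,\ldots,e_{k-l}\rangle_{\mathbb C}$, and set $W:=\langle e_{k-l+1},\ldots,e_N\rangle_{\mathbb C}\simeq\mathbb C^{N-k+l}$. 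Under the injective multiplication map $\mu:\bigwedge^lW\to\bigwedge^k\mathbb C^N$, $t\mapsto e_1\wedge\cdots\wedge e_{k-l}\wedge t$, the point $s_l$ is the image of $a+b$ with $a=e_{k-l+1}\wedge\cdots\wedge e_k$, $b=e_{k+1}\wedge\cdots\wedge e_{k+l}$, which is the dense (distance-$l$) secant point in $\sigma_2(\Gr(l,W))$.

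The key step is to show that \emph{every} decomposition $s_l=p'+q'$ with $d(p',q')=l$ must already ``live inside $W$'', i.e. satisfies $H_{p'}\cap H_{q'}=E$ and hence factors through $\mu$. For this I would invoke Proposition \ref{prop:intersection subspaces to secant pts}: since $p'+q'=s_l\in\Sigma_{l,\Gr(k,\mathbb C^N)}$, we have $H_{p'}\cap H_{q'}=H_{s_l}=H_p\cap H_q=E$. Thus we may write $p'=e_1\wedge\cdots\wedge e_{k-l}\wedge a'$ and $q'=e_1\wedge\cdots\wedge e_{k-l}\wedge b'$ with $a',b'\in\Gr(l,W)$ and $d(a',b')=l$ (the distance is preserved because $\mu$ is injective and compatible with the subspace-intersection description). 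Applying the injectivity of $\mu$ to $\mu(a'+b')=s_l=\mu(a+b)$ gives $a'+b'=a+b$ as points of $\sigma_2(\Gr(l,W))$, with both decompositions of distance $l=\operatorname{diam}\Gr(l,W)$, i.e. both generic. By Lemma \ref{lemma:generic identifiability} applied to $\Gr(l,W)$ (note $l\ge 3$), we conclude $\{a',b'\}=\{a,b\}$, and therefore $\{p',q'\}=\{p,q\}$. Hence $s_l$ — and so the whole orbit $\Sigma_{l,\Gr(k,\mathbb C^N)}$ — is identifiable.

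The one genuine subtlety, and the step I expect to require the most care, is the claim that an \emph{arbitrary} decomposition $s_l=p'+q'$ automatically has $d(p',q')=l$ (rather than some smaller distance), so that Proposition \ref{prop:intersection subspaces to secant pts} applies on the nose. A priori $s_l$ could a priori be written as a sum of two Grassmannian points lying on a line of the Grassmannian, or — if $s_l$ were a tangent point — as a point of a tangent orbit; but by the orbit stratification of Theorem \ref{thm:orbit stratification}, $s_l\in\Sigma_{l,\Gr(k,\mathbb C^N)}$ is neither in $\Gr(k,\mathbb C^N)$ nor in any $\Sigma_{j}$ with $j<l$, so any rank-$2$ decomposition $p'+q'$ with $p'\ne q'$ and $\langle p',q'\rangle\not\subset\Gr(k,\mathbb C^N)$ contributes a point of $\Sigma_{d(p',q'),\Gr(k,\mathbb C^N)}$, forcing $d(p',q')=l$. (If $\langle p',q'\rangle\subset\Gr(k,\mathbb C^N)$ then $s_l\in\Gr(k,\mathbb C^N)$, a contradiction since $l\ge 2$.) Once this case-check is in place, the rest is the formal reduction above.
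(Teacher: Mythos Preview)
Your argument is correct and follows essentially the same route as the paper's proof: reduce to the representative $s_l$, use Proposition~\ref{prop:intersection subspaces to secant pts} to pin down $H_{p'}\cap H_{q'}=E$, factor through the injective multiplication map $\mu$ into $\bigwedge^l W$, and invoke Lemma~\ref{lemma:generic identifiability} on the dense orbit of $\Gr(l,W)$. Your explicit justification that any decomposition $p'+q'=s_l$ must satisfy $d(p',q')=l$ (via disjointness of the $\Sigma_{m}$'s) is a point the paper leaves implicit, so in that respect your write-up is slightly more careful.
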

\begin{proof}
	Fix $k\geq 3$ and assume $N \geq 2k$. Since from Lemma \ref{lemma:generic identifiability} we already know that the dense orbit $\Sigma_{k,\Gr(k,\mathbb C^N)}$ is identifiable, we fix $3\leq l \leq k-1$. By homogeneity, it is enough to prove the thesis for the representative 
	\[\omega + \mathbbm{e}_l = e_1\wedge \ldots \wedge e_{k-l} \wedge (e_{k-l+1}\wedge \ldots \wedge e_k + e_{k+1} \wedge \ldots \wedge e_{k+l}) \in \Sigma_{l,\Gr(k,\mathbb C^N)} \ . \]
	Let $p,q \in \Gr(k,\mathbb C^N)$ be such that $p+q=\omega + \mathbbm{e}_l$: in particular, $H_p\cap H_q=H_{p+q}=H_{\omega + \mathbbm{e}_l}=\langle e_1\wedge \ldots \wedge e_{k-l}\rangle_{\mathbb C}$. Given $W:=\langle e_{k-l+1}, \ldots, e_N\rangle_{\mathbb C}\simeq \mathbb C^{N-k+l}$, we can write
	\[ p=e_1\wedge \ldots \wedge e_{k-l}\wedge v_{k-l+1}\wedge \ldots \wedge v_{k}=e_1\wedge \ldots \wedge e_{k-l}\wedge a \]
	\[ q=e_1\wedge \ldots \wedge e_{k-l}\wedge w_{k-l+1}\wedge \ldots \wedge w_{k}=e_1\wedge \ldots \wedge e_{k-l}\wedge b \ , \]
	and it holds $H_a\cap H_b=\{0\}$, that is $a+b\in \Sigma_{l,\Gr(l,W)}$. Now, the multiplication map
	\[ \begin{matrix}
		\mu : & \bigwedge^{l}W & \longrightarrow & \bigwedge^k\mathbb C^N\\
		& t & \mapsto & e_1\wedge \ldots \wedge e_{k-l}\wedge t
		\end{matrix}\]
	restricts to the map $\Sigma_{l,\Gr(l,W)}\rightarrow \Sigma_{l,\Gr(k,\mathbb C^N)}$: since $\mu(a+b)=p+q=\omega+\mathbbm{e}_l=\mu(e_{k-l+1}\wedge \ldots \wedge e_k + e_{k+1} \wedge \ldots \wedge e_{k+l})$, by injectivity we get $a+b= e_{k-l+1}\wedge \ldots \wedge e_k + e_{k+1} \wedge \ldots \wedge e_{k+l} \in \Sigma_{l,\Gr(l,W)}$. But from Lemma \ref{lemma:generic identifiability} $\Sigma_{l,\Gr(l,W)}$ is identifiable, thus $\{a,b\}=\{e_{k-l+1}\wedge \ldots \wedge e_k, e_{k+1} \wedge \ldots \wedge e_{k+l}\}$ and $\{p,q\}=\{\omega, \mathbbm{e}_l\}$, that is $\omega+ \mathbbm{e}_l$ is identifiable.
	\end{proof}

\section{Tangential-identifiability in $\tau(\Gr(k,\mathbb C^N))$}\label{sec:cactus}

\indent \indent In this section we focus on the tangent orbits $\Theta_{l,\Gr(k,\mathbb C^N)}\subset \tau(\Gr(k,\mathbb C^N))$. We point out that each tangent orbit $\Theta_{l,\Gr(k,\mathbb C^N)}$ for $l\geq2$ is {\em unidentifiable}: indeed, any representative $\theta_l\in \Theta_{l,\Gr(k,\mathbb C^N)}$ in \eqref{representatives tangent} admits the equivalent decomposition
	\[ \big(e_{1}\wedge (e_{k+2}+e_{k+1}) + e_{k+1}\wedge (e_{1} + e_{2})\big)\wedge e_3\wedge \ldots \wedge e_k + \sum_{j=3}^l e_1 \wedge \dots \wedge e_{j-1} \wedge e_{k+j} \wedge e_{j+1} \wedge \dots \wedge e_k \ . \]
%
%
%
%

\begin{obs}\label{rmk:non cactus for distance 2}
	The distance-$2$ orbit $\Sigma_{2,\Gr(k,\mathbb C^N)}=\Theta_{2,\Gr(k,\mathbb C^N)}$ is {\em not} tangential-identifiable: indeed, the representative $\theta_2=e_{k+1}\wedge e_2 \wedge \ldots \wedge e_k + e_1\wedge e_{k+2}\wedge e_3\wedge \ldots \wedge e_k$ lies on both the tangent spaces $T_\omega\Gr(k,\mathbb C^N)$ and $T_{e_3\wedge \ldots \wedge e_{k+2}}\Gr(k,\mathbb C^N)$.\\
%
	However, for any equivalent decomposition of $\theta_2$ one can exhibit two tangent spaces on which that decomposition lies. Thus we conclude that {\em the tangential-locus of $\theta_2$ has the same dimension of the decomposition locus of $\theta_2$}, which by Corollary \ref{cor:decomposition locus Sigma_2} is $4$.
\end{obs}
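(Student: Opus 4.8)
The statement consists of two parts, and both are governed by the geometry of $\sigma_2(\Gr(2,\mathbb{C}^4))=\mathbb{P}^5$.

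\emph{Tangential-unidentifiability of the distance-$2$ orbit.} Since $\Sigma_{2,\Gr(k,\mathbb{C}^N)}=\Theta_{2,\Gr(k,\mathbb{C}^N)}$ is a single $\SL(N)$-orbit and tangential-identifiability is an orbit property, it suffices to exhibit one tangent point lying on two distinct tangent lines. The natural candidate is the representative $\theta_2$, which by construction lies in $T_{[\omega]}\Gr(k,\mathbb{C}^N)$; I would then observe that its two summands $e_{k+1}\wedge e_2\wedge\cdots\wedge e_k$ and $e_1\wedge e_{k+2}\wedge e_3\wedge\cdots\wedge e_k$ also lie in $T_{[e_3\wedge\cdots\wedge e_{k+2}]}\Gr(k,\mathbb{C}^N)=\bigwedge^{k-1}\langle e_3,\dots,e_{k+2}\rangle\wedge\mathbb{C}^N$, because each of them contains a $(k-1)$-element subset of $\{e_3,\dots,e_{k+2}\}$. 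Since $[\omega]\neq[e_3\wedge\cdots\wedge e_{k+2}]$ and $\theta_2\notin\langle\omega,e_3\wedge\cdots\wedge e_{k+2}\rangle$, the tangent lines $\langle\omega,\theta_2\rangle$ and $\langle e_3\wedge\cdots\wedge e_{k+2},\theta_2\rangle$ are distinct, so $\theta_2$ — and hence the whole orbit — is not tangential-identifiable.

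\emph{Reduction for the tangential-locus.} For the dimension statement the key first step is the reduction: \emph{if $\theta_2\in T_p\Gr(k,\mathbb{C}^N)$ then $H_{\theta_2}\subseteq H_p$}. Here $H_{\theta_2}=\langle e_3,\dots,e_k\rangle$, nonzero because $k\geq3$; for a hypothetical $x\in H_{\theta_2}\setminus H_p$ one computes $\{\,y\in T_p\Gr(k,\mathbb{C}^N):y\wedge x=0\,\}=\bigwedge^{k-1}H_p\wedge x$ (a short linear-algebra check, using $\dim H_p=k$), and since $x\wedge\theta_2=0$ this would force $\theta_2=\gamma\wedge x$ with $\gamma\in\bigwedge^{k-1}H_p$ necessarily decomposable, i.e. $\theta_2\in\Gr(k,\mathbb{C}^N)$ — against $\theta_2\in\Sigma_{2,\Gr(k,\mathbb{C}^N)}$. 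Writing then $\theta_2=e_3\wedge\cdots\wedge e_k\wedge\eta$ with $\eta=e_{k+1}\wedge e_2+e_1\wedge e_{k+2}$ of rank $4$ in $\bigwedge^2\bigl(\mathbb{C}^N/\langle e_3,\dots,e_k\rangle\bigr)$, and invoking Remark \ref{Isom:TangSp&Mat} together with the injectivity of $e_3\wedge\cdots\wedge e_k\wedge(-)$, the tangential-locus of $\theta_2$ is carried isomorphically onto that of $\eta$ inside $\Gr(2,\mathbb{C}^{N-k+2})$; as $\eta$ has rank $4$ this locus lies in the Plücker quadric $\Gr(2,\mathbb{C}^4)$ of $\mathbb{P}^5=\sigma_2(\Gr(2,\mathbb{C}^4))=\tau(\Gr(2,\mathbb{C}^4))$ (\cite{zak1993tangents}).

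\emph{The count on $\mathbb{P}^5$.} It remains to compute the dimension of the tangential-locus of the generic point $[\eta]\in\mathbb{P}^5$. Following the sketch in the Remark, I would realize it through the incidence correspondence relating the decomposition locus $Dec(\theta_2)$ — four-dimensional by Corollary \ref{cor:decomposition locus Sigma_2} — with the tangent spaces through $\eta$: every decomposition $\eta=p_1+p_2$ spans a secant line which is contained in a whole family of tangent spaces $T_{[c]}\Gr(2,\mathbb{C}^4)$, and, conversely, every such tangent space contains a (distance-$2$) secant line realizing a decomposition of $\eta$ — a fact one establishes exactly as in the analysis of $\Sigma_2$. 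Tracking that both projections of this correspondence are fibrations whose fibre dimensions are pinned down by the $4$-defectivity of $\sigma_2(\Gr(2,\mathbb{C}^4))$, one concludes that the tangential-locus of $\eta$, hence of $\theta_2$, has dimension equal to that of $Dec(\theta_2)$, namely $4$. I expect the genuine difficulty to lie precisely here: keeping the fibre dimensions of the decompositions-versus-tangent-spaces correspondence under sufficiently tight control to obtain an equality of dimensions rather than merely a one-sided estimate.
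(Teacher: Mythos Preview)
Your tangential-unidentifiability argument and your reduction to $\Gr(2,\mathbb{C}^4)$ are both correct; the reduction is in fact more careful than the paper's own one-line heuristic (``for any equivalent decomposition of $\theta_2$ one can exhibit two tangent spaces on which that decomposition lies''). The problem is with the last step, and it is not a gap you can close: the asserted dimension is wrong. On the Pl\"ucker quadric $Q=\Gr(2,\mathbb{C}^4)\subset\mathbb{P}^5$, polarity gives $\eta\in T_pQ\Longleftrightarrow p\in\eta^\perp$, so the tangential-locus of $\eta$ is the hyperplane section $Q\cap\eta^\perp$, a smooth $3$-dimensional quadric (since $[\eta]\notin Q$). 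Hence the tangential-locus of $\theta_2$ has dimension $3$, not $4$. Your decompositions-versus-tangent-spaces correspondence is genuine, but its two projections have fibres of dimensions $2$ and $3$ respectively (over a decomposition $\{p_1,p_2\}$ the fibre is $Q\cap\langle p_1,p_2\rangle^\perp$, a quadric surface; over a tangent point $c$ the fibre is the $3$-dimensional family of bisecants to the cone $Q\cap c^\perp$ through $\eta$), so no balancing of fibre dimensions can force the two loci to have equal dimension. Your suspicion that the difficulty lay exactly here was well-founded.

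This is in fact an internal inconsistency in the paper: the Remark claims dimension $4$, yet the proof of Proposition~\ref{prop:tangent dimensions} immediately afterwards subtracts $3$ (not $4$) to obtain $\dim\Theta_{2,\Gr(k,\mathbb{C}^N)}=k(N-k)+2(N-2)-3$, the value matching $\dim\Sigma_{2,\Gr(k,\mathbb{C}^N)}$ from Proposition~\ref{prop:secant dimensions}. The number actually needed downstream is $3$, and your reduction already delivers it without any further incidence-correspondence bookkeeping.
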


\begin{teo}\label{thm:tangential-identifiability for tangent orbits}
	For any $l\geq 3$, the tangent orbit $\Theta_{l,\Gr(k,\mathbb C^N)}$ is tangential-identifiable.
	\end{teo}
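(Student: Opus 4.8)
The plan is to reduce the statement to the case of the \emph{dense} tangent orbit of a (possibly smaller) Grassmannian, and then to prove tangential-identifiability in that case by a rigidity argument; by homogeneity it suffices to treat the representative $\theta_l$ from \eqref{representatives tangent}.

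\emph{Step 1 (reduction to the dense case).} First I would compute the subspace $H_{\theta_l}=\Ker(\psi_{\theta_l})$ attached to $\theta_l$ via the multiplication map \eqref{multiplication map}: writing $\theta_l=\pm\,e_{l+1}\wedge\ldots\wedge e_k\wedge\eta_l$ with $\eta_l$ the full-rank tangent vector to $\Gr(l,\langle e_1,\ldots,e_l,e_{k+1},\ldots,e_{k+l}\rangle)$ at $[e_1\wedge\ldots\wedge e_l]$, injectivity of $e_{l+1}\wedge\ldots\wedge e_k\wedge\bullet$ together with $\Ker(\psi_{\eta_l})=\{0\}$ yields $H_{\theta_l}=\langle e_{l+1},\ldots,e_k\rangle$, of dimension $k-l$. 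Next, if $\theta_l\in T_{p'}\Gr(k,\mathbb C^N)$ and $\theta_l$ has tangential rank $l'$ at $p'$, then $\theta_l\in\Theta_{l',\Gr(k,\mathbb C^N)}$, so $l'=l$ because the tangent orbits are pairwise disjoint; writing $\theta_l=\sum_{i=1}^{k} v_1\wedge\ldots\wedge w_i\wedge\ldots\wedge v_k$ with $v_1,\ldots,v_k$ a basis of $H_{p'}$ chosen so that $v_{l+1},\ldots,v_k$ span the kernel of the homomorphism $H_{p'}\to\mathbb C^N/H_{p'}$ induced by $\theta_l$, a short computation gives $\langle v_{l+1},\ldots,v_k\rangle\subseteq\Ker(\psi_{\theta_l})$, and a dimension count forces $\langle e_{l+1},\ldots,e_k\rangle=H_{\theta_l}\subseteq H_{p'}$. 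Hence $p'=e_{l+1}\wedge\ldots\wedge e_k\wedge a$ for some $[a]\in\Gr(l,W)$, where $W:=\langle e_1,\ldots,e_l,e_{k+1},\ldots,e_N\rangle$. Finally, using the injective map $\mu\colon\bigwedge^l W\to\bigwedge^k\mathbb C^N$, $t\mapsto e_{l+1}\wedge\ldots\wedge e_k\wedge t$, and its one-sided inverse $\iota:=\iota_{e_{l+1}^*}\cdots\iota_{e_k^*}$, the explicit splitting of $T_{p'}\Gr(k,\mathbb C^N)=\bigwedge^{k-1}H_{p'}\wedge\mathbb C^N$ according to how many factors lie in $\langle e_{l+1},\ldots,e_k\rangle$ shows $\iota\big(T_{p'}\Gr(k,\mathbb C^N)\big)\subseteq T_{[a]}\Gr(l,W)$; since $\iota(\theta_l)=\pm\eta_l$, the hypothesis gives $\eta_l\in T_{[a]}\Gr(l,W)$. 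As $\eta_l$ represents the dense tangent orbit of $\Gr(l,W)$ and $\dim W=N-k+l\geq 2l$, it is enough to prove: for $l\geq3$ and $M\geq 2l$ the dense tangent orbit of $\Gr(l,\mathbb C^M)$ is tangential-identifiable.

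\emph{Step 2 (the dense case).} Let $\eta=\sum_{j=1}^l e_1\wedge\ldots\wedge e_{l+j}\wedge\ldots\wedge e_l\in T_{E_1}\Gr(l,\mathbb C^M)$, with $E_1=\langle e_1,\ldots,e_l\rangle$ and $E_2=\langle e_{l+1},\ldots,e_{2l}\rangle$, and suppose $\eta\in T_{[B]}\Gr(l,\mathbb C^M)=\bigwedge^{l-1}B\wedge\mathbb C^M$. I would use the equivalence ``$\eta\in\bigwedge^{l-1}B\wedge\mathbb C^M\iff\iota_\phi\eta\in\bigwedge^{l-1}B$ for every $\phi\in B^\perp$'', together with: (i) every nonzero element of $\bigwedge^{l-1}B$ is decomposable with support a hyperplane of $B$; (ii) the linear map $\phi\mapsto\iota_\phi\eta$ on $(\mathbb C^M)^*$ has rank $2l$ and kernel $(E_1\oplus E_2)^\perp$ (a direct computation of the $\iota_{e_n^*}\eta$, which also identifies $\operatorname{supp}(\eta)=E_1\oplus E_2$). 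From (ii), $\dim\{\iota_\phi\eta:\phi\in B^\perp\}=\dim(B+E_1+E_2)-l$, and as this sits inside the $l$-dimensional space $\bigwedge^{l-1}B$ we get $B\subseteq E_1\oplus E_2$; so I may assume $M=2l$, and then $\{\iota_\phi\eta:\phi\in B^\perp\}=\bigwedge^{l-1}B$. It then remains to show that every decomposable $\iota_\phi\eta$ has support contained in $E_1$: writing $\iota_\phi\eta=\zeta+\beta$ with $\beta\in\bigwedge^{l-1}E_1$ and $\zeta$ of pure $E_2$-degree $1$, a decomposable $(l-1)$-form of $E_2$-degree $\leq1$ has support meeting $E_2$ in dimension $\leq1$, and in the case of dimension exactly $1$ the $E_2$-content of $\zeta$ must be spanned by a single $e_{l+j_0}$; since $l\geq3$ there are at least two indices different from $j_0$, which forces the coefficients of $\phi$ along $e_1^*,\ldots,e_l^*$ to vanish, hence $\zeta=0$ — a contradiction. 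Therefore $\bigwedge^{l-1}B=\bigwedge^{l-1}E_1$, so $B=E_1$, i.e. the tangential-locus of $\eta$ is $\{[E_1]\}$.

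\emph{Main obstacle.} The delicate point is Step 2: an infinitesimal argument does not suffice, because the tangential-locus of $\eta$ turns out to be non-reduced at $[E_1]$ (there is a non-integrable one-parameter family of first-order deformations of $B$), so the proof must be global; its core is the rigidity statement that a contraction $\iota_\phi\eta$ of the full-rank tangent vector $\eta$ can be decomposable only when its support lies in $E_1$. In Step 1 the only slightly technical item is checking that the contraction $\iota$ carries $T_{p'}\Gr(k,\mathbb C^N)$ into $T_{[a]}\Gr(l,W)$; this is entirely analogous to the multiplication-map reductions already used in Sections \ref{sec:orbit partition} and \ref{sec:identifiability of secant orbits}, so I expect it to be routine. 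Note that the hypothesis $l\geq3$ enters decisively both in the disjointness/dimension bookkeeping of Step 1 and in the final contradiction of Step 2, which is consistent with the failure of the statement for $l=2$.
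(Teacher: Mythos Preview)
Your reduction in Step~1 is correct and parallels the multiplication--map arguments already used for the secant orbits; in particular the identification $H_{\theta_l}=\langle e_{l+1},\dots,e_k\rangle$ and the passage from $T_{p'}\Gr(k,\mathbb C^N)$ to $T_{[a]}\Gr(l,W)$ via the contraction $\iota$ both go through. The difficulty is in Step~2, where the final rigidity argument is incomplete. From the decomposability of $\iota_\phi\eta$ together with its $E_2$-degree being $\leq 1$, what actually follows is that $\zeta\in\langle w\rangle\wedge\bigwedge^{l-2}E_1$ for \emph{some} vector $w\in E_2$, not for a standard basis vector $e_{l+j_0}$; hence your index-counting step (``at least two indices different from $j_0$'') does not apply as written. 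Moreover you treat only the case where the support of $\iota_\phi\eta$ meets $E_2$ in a line; the case where this intersection is $\{0\}$ while $\zeta\neq 0$ is equally possible (e.g.\ $\iota_\phi\eta=(e_1+w)\wedge e_2\wedge\cdots\wedge e_{l-1}$ with $0\neq w\in E_2$) and must be handled too --- it again yields $\zeta\in\langle w\rangle\wedge\bigwedge^{l-2}E_1$, but you do not say so. Both gaps are reparable: one can invoke the $\GL(l)$-symmetry of $\eta$ (acting on $E_1$ and contragrediently on $E_2$) to normalize $w$ to $e_{l+1}$, after which your index argument works; or one can check directly that the system $\gamma_j=a_j\alpha$, where $\gamma_j\in\bigwedge^{l-2}E_1$ is the $e_{l+j}$-component of $\zeta$, forces $c_1=\cdots=c_l=0$. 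But the proof as stated does not close.

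For comparison, the paper avoids both the reduction and the contraction analysis entirely. It uses the closed formula
\[
T_\omega\Gr(k,\mathbb C^N)\cap T_q\Gr(k,\mathbb C^N)=\bigwedge^{k-1}(H_\omega\cap H_q)\wedge V \ + \ \bigwedge^{k-2}(H_\omega\cap H_q)\wedge H_\omega\wedge H_q \ ,
\]
which vanishes as soon as $d(\omega,q)\geq 3$, and for $d(\omega,q)\in\{1,2\}$ a short inspection of how each decomposable summand $\theta_{l,j}$ of $\theta_l$ can sit in this space, run over all $j\leq l$ with $l\geq 3$, forces $H_q=H_\omega$. Your contraction approach has the merit of being transportable to situations where such an explicit intersection formula is not available, but in the present case it is substantially longer and, as written, leaves the decisive combinatorial step unfinished.
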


\begin{proof}
	Fix $l\geq 3$. By homogeneity, it is enough to prove the thesis for the representative $\theta_l=\sum_{j=1}^l\theta_{l,j} \in \Theta_{l,\Gr(k,\mathbb C^N)}$ where $\theta_{l,j}:=e_1 \wedge \dots \wedge e_{j-1} \wedge e_{k+j} \wedge e_{j+1} \wedge \dots \wedge e_k$ are the summands appearing in \eqref{representatives tangent}. We already know that $\theta_l \in T_\omega\Gr(k,\mathbb C^N)$. We want to prove that, if $q \in \Gr(k,\mathbb C^N)$ is such that $\theta_l \in T_q \Gr(k,\mathbb C^N)$, then $q = \omega$.\\
	\indent Assume $\theta_l \in T_q\Gr(k,\mathbb C^N)$ for some $q \in \Gr(k,\mathbb C^N)$, hence $\theta_l \in T_\omega\Gr(k,\mathbb C^N)\cap T_q\Gr(k,\mathbb C^N)$. Notice that
	\begin{align}
		T_\omega\Gr(k,\mathbb C^N)\cap T_q\Gr(k,\mathbb C^N) & = \left( \bigwedge^{k-1}H_\omega\wedge V \right) \cap \left( \bigwedge^{k-1}H_q\wedge V \right) \nonumber \\
		& = \bigwedge^{k-1}(H_\omega\cap H_q)\wedge V \ + \ \bigwedge^{k-2}(H_\omega \cap H_q)\wedge H_\omega \wedge H_q \ . \label{intersection of tg spaces}
		\end{align}
	If $\dim(H_\omega \cap H_q)\lneq k-2$ (i.e. $d(\omega,q)\geq 3$), then $T_\omega \Gr(k,\mathbb C^N)\cap T_q\Gr(k,\mathbb C^N) =\{0\}$, leading to a contradiction. If $k-2 \leq \dim(H_\omega \cap H_q)\leq k-1$ (i.e. $1\leq d(\omega, q) \leq 2$), as each summand $\theta_{l,j}$ for $j=1:l$ is a simple element in the space \eqref{intersection of tg spaces}, it follows that in each $\theta_{l,j}$ there are at least $k-2$ wedge-entries lying in $H_\omega \cap H_q$, that is $H_\omega \cap H_q\subset \langle e_1,\ldots, \hat{e_j}, e_{k+j}, \ldots e_k\rangle_\mathbb C$. Since $l\geq 3$, one deduces $H_q=H_\omega$, leading to contradiction. We conclude that $\dim(H_\omega \cap H_q)=k$ must hold, that is $q=\omega$.
	\end{proof}

\subsection{Dimensions of tangent orbits}

\indent \indent We are now ready to compute the dimensions of the tangent orbits $\Theta_{l,\Gr(k,\mathbb C^N)}$, completing the description in Section \ref{sec:orbit partition}.

\begin{prop}\label{prop:tangent dimensions}
	For $l=2:k$, the distance-$l$ tangent orbit $\Theta_{l,\Gr(k,\mathbb C^N)}$ has dimension 
	\[\dim\Theta_{l,\Gr(k,\mathbb C^N)}=\begin{cases}
	k(N-k)+2(N-2)-3 & \text{for } l=2 \\
	k(N-k)+l(N-l)   & \text{for } l\geq 3 \ .
	\end{cases} \]
\end{prop}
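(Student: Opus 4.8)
The plan is to split the statement into the elementary case $l=2$ and the range $l\geq 3$, which I would settle by an incidence-variety argument whose main input is tangential-identifiability. For $l=2$ nothing new is needed: by Remark \ref{rmk on tangent orbits} the distance-$2$ tangent orbit coincides with the distance-$2$ secant orbit, $\Theta_{2,\Gr(k,\mathbb C^N)}=\Sigma_{2,\Gr(k,\mathbb C^N)}$, so its dimension $k(N-k)+2(N-2)-3$ is precisely the value already computed in Proposition \ref{prop:secant dimensions}.

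For $l\geq 3$ I would introduce the incidence variety
\[
I_l:=\big\{([t],p)\in\mathbb P(\textstyle\bigwedge^k\mathbb C^N)\times\Gr(k,\mathbb C^N)\ \big|\ t\in T_p\Gr(k,\mathbb C^N),\ \operatorname{rank}(t)=l\big\},
\]
where $\operatorname{rank}(t)$ is the rank of the image of $t$ under the surjection $T_p\Gr(k,\mathbb C^N)=\bigwedge^{k-1}H_p\wedge\mathbb C^N\to\Hom(H_p,\mathbb C^N/H_p)$ of Remark \ref{Isom:TangSp&Mat}, whose kernel is the line $\langle p\rangle$. The first projection $\pi_1\colon I_l\to\Gr(k,\mathbb C^N)$ is $\SL(N)$-equivariant and surjective, so all of its fibres are isomorphic to $\pi_1^{-1}([\omega])$, the locus of rank-$l$ points of $\mathbb P(T_\omega\Gr(k,\mathbb C^N))$. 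That fibre is the projectivization of the preimage in $\bigwedge^{k-1}H_\omega\wedge\mathbb C^N$ of the determinantal locus of $(N-k)\times k$ matrices of rank $\leq l$; since this preimage is a cone of dimension $1+l(N-l)$ and (because $2l\leq 2k\leq N$) the rank-$(l-1)$ stratum has strictly smaller dimension, $\pi_1^{-1}([\omega])$ has dimension $l(N-l)$. As $I_l$ is irreducible (it is $\SL(N)$-homogeneous, or: it fibres over the irreducible base $\Gr(k,\mathbb C^N)$ with irreducible equidimensional fibres), I would conclude
\[
\dim I_l=\dim\Gr(k,\mathbb C^N)+l(N-l)=k(N-k)+l(N-l).
\]

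Finally I would analyse the second projection $\pi_2\colon I_l\to\mathbb P(\bigwedge^k\mathbb C^N)$. By Proposition \ref{prop:tangent orbits} the rank-$l$ tangent points form exactly the orbit $\Theta_{l,\Gr(k,\mathbb C^N)}$, so $\pi_2$ is surjective onto it; moreover, if $t\in\Theta_{l,\Gr(k,\mathbb C^N)}$ and $t\in T_p\Gr(k,\mathbb C^N)$, then the rank of $t$ there is necessarily $l$ again (otherwise $t$ would lie in some other orbit $\Theta_{l',\Gr(k,\mathbb C^N)}$, by Proposition \ref{prop:tangent orbits}), so $\pi_2^{-1}([t])$ is exactly the tangential-locus of $t$ in the sense of Definition \ref{def:tangential-locus}. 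For $l\geq 3$, Theorem \ref{thm:tangential-identifiability for tangent orbits} tells us that this tangential-locus is a single point, so $\pi_2$ restricts to a bijection $I_l\to\Theta_{l,\Gr(k,\mathbb C^N)}$ and therefore
\[
\dim\Theta_{l,\Gr(k,\mathbb C^N)}=\dim I_l=k(N-k)+l(N-l).
\]
The crux---and the reason the tangent-orbit dimensions are settled only now---is that $\pi_2$ has $0$-dimensional fibres, which is precisely tangential-identifiability; the remaining ingredients (the determinantal dimension count, the density of the rank-$l$ stratum, the irreducibility of $I_l$) are routine. One could alternatively imitate the proof of Proposition \ref{prop:secant dimensions} via the fibration $t\mapsto H_t=\Ker(\psi_t)\in\Gr(k-l,\mathbb C^N)$, factoring the representative as $\theta_l=e_{l+1}\wedge\dots\wedge e_k\wedge\eta_l$ and identifying the fibre over $\langle e_{l+1},\dots,e_k\rangle$ with the dense tangent orbit of $\Gr(l,\mathbb C^{N-k+l})$, of dimension $2l(N-k)$; this again yields $(k-l)(N-k+l)+2l(N-k)=k(N-k)+l(N-l)$, but it first requires the tangent analogue of Lemma \ref{lemma:intersection subspaces to generic secant pts}, namely $H_{\eta_l}=\{0\}$ for the dense tangent vector $\eta_l$.
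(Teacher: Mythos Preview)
Your argument is correct and, for $l\geq 3$, is essentially the paper's own proof phrased more explicitly: both compute $\dim\Theta_{l,\Gr(k,\mathbb C^N)}$ by fibering over $\Gr(k,\mathbb C^N)$ with fibre the rank-$l$ stratum of $\Hom(H_p,\mathbb C^N/H_p)$ (dimension $l(N-l)$) and then invoke Theorem~\ref{thm:tangential-identifiability for tangent orbits} to ensure that the projection to $\Theta_{l,\Gr(k,\mathbb C^N)}$ has $0$-dimensional fibres. Your incidence-variety formulation makes this fibre-dimension step cleaner, but the content is identical.

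The one genuine difference is the case $l=2$. You bypass it entirely by citing $\Theta_{2,\Gr(k,\mathbb C^N)}=\Sigma_{2,\Gr(k,\mathbb C^N)}$ (Remark~\ref{rmk on tangent orbits}) and the value already obtained in Proposition~\ref{prop:secant dimensions}. The paper instead runs the same incidence argument and corrects by the dimension of the tangential-locus from Remark~\ref{rmk:non cactus for distance 2}; note however that the remark states this locus is $4$-dimensional while the proof subtracts $3$ (the correct value, as one checks on the quadric $\Gr(2,\mathbb C^4)\subset\mathbb P^5$, whose tangential-locus at a point off the quadric is a hyperplane section, hence $3$-dimensional). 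Your shortcut sidesteps this inconsistency and is the cleaner way to handle $l=2$.
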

\begin{proof}
	The tangent bundle on the Grassmannian $\Gr(k,\mathbb C^N)$ is $\cal T_{\Gr(k,\mathbb C^N)}=\cal U^\vee \otimes \cal Q$, where $\cal U$ is the rank-$k$ universal bundle and $\cal Q$ is the rank-$(N-k)$ quotient bundle. In particular, its fibre at $p\in \Gr(k,\mathbb C^N)$ is $(\cal T_{\Gr(k,\mathbb C^N)})_p\simeq \mathbb C^k \otimes \mathbb C^{N-k}$.\\
	Via the identification of the tangent bundle with the tangential variety, the orbit $\Theta_{l,\Gr(k,\mathbb C^N)}$ exactly corresponds to the set of all rank-$l$ matrices in $(\cal T_{\Gr(k,\mathbb C^N)})_p$ as $p \in \Gr(k,\mathbb C^N)$ varies: we recall that the set of rank-$l$ matrices $[\mathbb C^k \otimes \mathbb C^{N-k}]_l$ has dimension $l(N-k+k-l)=l(N-l)$.\\
	\indent From Proposition \ref{thm:tangential-identifiability for tangent orbits}, for $l\geq 3$ the tangential-identifiability implies that 
	\[\dim \Theta_{l,\Gr(k,\mathbb C^N)}=\dim\Gr(k,\mathbb C^N)+\dim [\mathbb C^k\otimes \mathbb C^{N-k}]_l= k(N-k)+l(N-l) \ . \]
	On the other hand, by Remark \ref{rmk:non cactus for distance 2} the orbit $\Theta_{2,\Gr(k,\mathbb C^N)}$ is not tangential-identifiable and the tangential-locus of any element of it is $4$-dimensional. In particular, we get
	\[ \dim\Theta_{2,\Gr(k,\mathbb C^N)}=(\dim\Gr(k,\mathbb C^N)-3)+ \dim [\mathbb C^k\otimes \mathbb C^{N-k}]_2= k(N-k)+2(N-2)-3 \ . \] \vskip-0.4cm
\end{proof}

\indent Notice that the dimension of $\Theta_{2,\Gr(k,\mathbb C^N)}$ obtained above is the same as the dimension of the orbit $\Sigma_{2,\Gr(k,\mathbb C^N)}$ obtained in Proposition \ref{prop:secant dimensions}, in agreement with the equality $\Sigma_{2,\Gr(k,\mathbb C^N)}=\Theta_{2,\Gr(k,\mathbb C^N)}$. Moreover, it is worth remarking that for any $l\geq 3$ the tangent orbit $\Theta_{l,\Gr(k,\mathbb C^N)}$ has codimension $1$ in the secant orbit $\Sigma_{l,\Gr(k,\mathbb C^N)}$.

\section{The $2$-nd Terracini locus of $\Gr(k,\mathbb C^N)$}\label{sec:terracini locus}

\indent \indent In the following we underline a connection between the tangential-locus and the {\em Terracini locus}, the latter being introduced in \cite{ballico2021terracini,ballico2020terracini}. The {\em $r$-th Terracini locus} of a variety $X$ is 
\[ \Terr_r(X):=\overline{\left\{ \{p_1,\ldots, p_r\}\in Hilb_r(X) \ | \ \dim \langle T_{p_1}X,\dots,T_{p_r}X \rangle < \dim \sigma_r(X)\right\}} \ , \]
where $Hilb_r(X)$ is the Hilbert scheme of $0$-dimensional subschemes of $X$ of length $r$ (see Section \ref{sec:singular locus} for details). From \cite[Remark 2.1]{chiantiniciliberto} we also recall that, given a variety $X\subset \mathbb P^m$, its {\em abstract secant variety} is
\[ Ab\sigma_2(X):=\overline{\left\{(x,y,p)\in X^2_{/\mathfrak S_2}\times \mathbb P^m \ | \ p \in \langle x,y \rangle\right\}} \ , \]
and it is endowed with the natural projections $\pi_1:Ab\sigma_2(X)\rightarrow X^2_{/\mathfrak S_2}$ and $\pi_2:Ab\sigma_2(X)\rightarrow \mathbb P^m$ such that $\pi_2\left(Ab\sigma_2(X)\right)=\sigma_2(X)$. Notice that the Hilbert scheme $Hilb_2(X)$ is the blow-up $Bl:Hilb_2(X)\rightarrow X^2_{/\mathfrak S_2}$ of the symmetric square along the diagonal (cf. \cite[Section 3]{vermeire2001some}, \cite[Section 1]{ullery}). 

\begin{teo}\label{thm:terracini locus}
	Let $k \geq 3$ and $N \geq 6$. Then the second Terracini locus $\Terr_2\left(\Gr(k,\mathbb C^N)\right)$ of the Grassmannian $\Gr(k,\mathbb C^N)$ corresponds to the distance-$2$ orbit closure $\overline{\Sigma_{2,\Gr(k,\mathbb C^N)}}$. More precisely, in the above notations, it holds
	\[\Terr_2\left(\Gr(k,\mathbb C^N)\right) = (Bl^{-1}\circ\pi_1\circ\pi_2^{-1})\left(\overline{\Sigma_{2,\Gr(k,\mathbb C^N)}}\right) \ . \]
\end{teo}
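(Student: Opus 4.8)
The plan is to reduce the statement to a dimension count on tangent spaces controlled by the Hamming distance, and then to transport the resulting locus through the birational maps $\pi_2$, $\pi_1$ and $Bl$. Write $X:=\Gr(k,\mathbb C^N)$. Recall that $\{p_1,p_2\}\in\Terr_2(X)$ means $\dim\langle T_{p_1}X,T_{p_2}X\rangle<\dim\sigma_2(X)$; since $k\ge 3$ the secant variety is non-defective, so $\dim\sigma_2(X)=2k(N-k)+1$, and by Terracini's lemma a general pair is not in $\Terr_2(X)$, which is therefore a proper closed $\SL(N)$-invariant subvariety of $Hilb_2(X)$. I would proceed in three steps: (i) compute $\dim\langle T_pX,T_qX\rangle$ for a reduced pair $\{p,q\}$ purely in terms of $d(p,q)$; (ii) control the non-reduced length-$2$ schemes, i.e. the exceptional divisor of $Bl$; (iii) identify the resulting set with $(Bl^{-1}\circ\pi_1\circ\pi_2^{-1})(\overline{\Sigma_{2,\Gr(k,\mathbb C^N)}})$.

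For step (i), fix $p\neq q$, put $l:=d(p,q)$, so that $\dim(H_p\cap H_q)=k-l$ by Lemma \ref{distance as intersection}. By homogeneity the identity \eqref{intersection of tg spaces} holds for $\{p,q\}$, giving, as a linear subspace of $\bigwedge^k\mathbb C^N$, $T_pX\cap T_qX=\bigl(\bigwedge^{k-1}(H_p\cap H_q)\bigr)\wedge\mathbb C^N+\bigl(\bigwedge^{k-2}(H_p\cap H_q)\bigr)\wedge H_p\wedge H_q$. The first summand is nonzero only for $l\le 1$ and the second only for $l\le 2$; moreover, when $l=2$, writing $H_p=(H_p\cap H_q)\oplus A$ and $H_q=(H_p\cap H_q)\oplus B$ with $\dim A=\dim B=2$ and $A\cap B=\{0\}$, the second summand equals $\bigl(\bigwedge^{k-2}(H_p\cap H_q)\bigr)\wedge A\wedge B$, of dimension $4$. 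Since $\dim\langle T_pX,T_qX\rangle=2k(N-k)+1-\dim(T_pX\cap T_qX)$, this equals $\dim\sigma_2(X)$ for $l\ge 3$ (re-proving non-defectivity), equals $\dim\sigma_2(X)-4$ for $l=2$, and is smaller still for $l=1$. Hence a reduced pair lies in $\Terr_2(X)$ if and only if $d(p,q)\le 2$; the drop of $4$ for $l=2$ is precisely the defect of $\Gr(2,\mathbb C^4)$ visible in the representative $\omega+\mathbbm{e}_2=e_1\wedge\cdots\wedge e_{k-2}\wedge(e_{k-1}\wedge e_k+e_{k+1}\wedge e_{k+2})$, in agreement with Corollary \ref{cor:decomposition locus Sigma_2}.

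For step (ii), note that $\{d\le 2\}=\{\dim(H_p\cap H_q)\ge k-2\}\subseteq X^2_{/\mathfrak S_2}$ is closed and irreducible, so $Bl^{-1}(\{d\le 2\})$ is the closure in $Hilb_2(X)$ of the reduced pairs with $d=2$; since $\Terr_2(X)$ is closed and contains these by step (i), one gets $Bl^{-1}(\{d\le 2\})\subseteq\Terr_2(X)$. For the reverse inclusion, $\Terr_2(X)$ is a union of $\SL(N)$-orbit closures; a point of $Hilb_2(X)$ over the diagonal is a pair $(p,[v])$ with $v\in T_pX\simeq\Hom(H_p,\mathbb C^N/H_p)$ (Remark \ref{Isom:TangSp&Mat}), determined by $\operatorname{rk} v$, and the only strata outside $Bl^{-1}(\{d\le 2\})$ are the reduced pairs with $d\ge 3$ (excluded by step (i)) and the non-reduced schemes with $\operatorname{rk} v\ge 3$; for the latter, a direct computation of the span of the length-$(2k(N-k)+2)$ scheme cut by $\mathcal I_\xi^2$ — the same kind of wedge count as in \eqref{intersection of tg spaces} — shows it has the expected dimension $2k(N-k)+1=\dim\sigma_2(X)$, so these too are excluded. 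Thus $\Terr_2(X)=Bl^{-1}(\{d\le 2\})$.

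For step (iii), by Lemma \ref{lemma:generic identifiability} the projection $\pi_2:Ab\sigma_2(X)\to\sigma_2(X)$ is birational, and by Theorems \ref{thm:identifiability for secant orbits} and \ref{thm:tangential-identifiability for tangent orbits} together with Corollary \ref{cor:decomposition locus Sigma_2} the locus over which $\pi_2$ fails to be an isomorphism is exactly $\overline{\Sigma_{2,\Gr(k,\mathbb C^N)}}=\Sigma_{1,\Gr(k,\mathbb C^N)}\cup\Sigma_{2,\Gr(k,\mathbb C^N)}$, the fibre over a general point of $\Sigma_{2,\Gr(k,\mathbb C^N)}$ being an irreducible $4$-fold (an open subset of a $\mathbb P^4$, coming from the geometry of $\Gr(2,\mathbb C^4)$). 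For $\{p,q\}$ with $d(p,q)=l$, Proposition \ref{prop:partition of secant branch} (normalising $p+q$ to the representative $s_l$) shows that the general point of $\langle p,q\rangle$ lies in $\Sigma_{l,\Gr(k,\mathbb C^N)}$, and by Theorem \ref{thm:orbit stratification} that orbit is contained in $\overline{\Sigma_{2,\Gr(k,\mathbb C^N)}}$ if and only if $l\le 2$; hence the component of $\pi_2^{-1}(\overline{\Sigma_{2,\Gr(k,\mathbb C^N)}})$ dominating it is carried by $\pi_1$ onto $\{d\le 2\}\subseteq X^2_{/\mathfrak S_2}$, and applying $Bl^{-1}$ together with step (ii) yields $(Bl^{-1}\circ\pi_1\circ\pi_2^{-1})(\overline{\Sigma_{2,\Gr(k,\mathbb C^N)}})=Bl^{-1}(\{d\le 2\})=\Terr_2(X)$. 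The hardest part will be step (iii) — isolating the correct component of $\pi_2^{-1}(\overline{\Sigma_2})$ and handling the behaviour over the diagonal and the exceptional divisor — along with the two borderline computations (the drop of $4$ when $d=2$, and the absence of any drop for non-reduced schemes of rank $\ge 3$); once \eqref{intersection of tg spaces} is in hand, the rest is formal.
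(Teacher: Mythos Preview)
Your step (i) is exactly the paper's argument: invoke \eqref{intersection of tg spaces} to see that for distinct $p,q$ the intersection $T_pX\cap T_qX$ is nonzero precisely when $d(p,q)\le 2$, hence the span $\langle T_pX,T_qX\rangle$ drops in exactly those cases. The paper's proof stops there; it does not treat non-reduced length-$2$ schemes and does not unwind the composition $Bl^{-1}\circ\pi_1\circ\pi_2^{-1}$, the displayed identity being asserted somewhat informally once the reduced drop locus has been identified with $\{d\le 2\}$.

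Your steps (ii)--(iii) try to make this precise, which is laudable, but there is a slip in step (ii). You assert both that $Bl^{-1}(\{d\le 2\})$ equals the closure in $Hilb_2(X)$ of the reduced $d=2$ pairs and that non-reduced schemes with $\operatorname{rk} v\ge 3$ lie \emph{outside} $Bl^{-1}(\{d\le 2\})$. If $Bl^{-1}$ denotes set-theoretic preimage both claims are false: since $\{d\le 2\}\subset X^2_{/\mathfrak S_2}$ contains the diagonal, $Bl^{-1}(\{d\le 2\})$ contains the entire exceptional divisor, hence all non-reduced schemes regardless of rank. What you are really describing is the \emph{strict transform} of $\{d\le 2\}$, which indeed meets the exceptional divisor only along the rank-$\le 2$ stratum (a pair with $d(p,q)=2$ degenerates to the diagonal along a rank-$2$ tangent direction, and conversely). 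With that reading your argument is fine, but you should flag that this does not literally match the $Bl^{-1}$ in the theorem's displayed formula; the paper's own ``corresponds to'' absorbs this ambiguity rather than resolving it.
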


\begin{proof}
	Consider a point $p+q\in \sigma_2(\Gr(k,\mathbb C^N))$ for certain $p,q \in \Gr(k,\mathbb C^N)$. We show that $\dim \left\langle T_p \Gr(k,\mathbb C^N), T_q \Gr(k,\mathbb C^N) \right\rangle$ drops only for $p+q \in \overline{\Sigma_{2,\Gr(k,\mathbb C^N)}}=\Gr(k,\mathbb C^N)\sqcup \Sigma_{2,\Gr(k,\mathbb C^N)}$. As in \eqref{intersection of tg spaces}, it holds
	\[ T_p\Gr(k,\mathbb C^N) \cap T_q\Gr(k,\mathbb C^N) = \bigwedge^{k-1}(H_p\cap H_q)\wedge V + \bigwedge^{k-2}(H_p \cap H_q)\wedge H_p \wedge H_q \ . \]
	In particular, for $d(p,q)\geq 3$ one gets $T_p\Gr(k,\mathbb C^N) \cap T_q\Gr(k,\mathbb C^N)=\{0\}$, hence the dimension of the span does not drop. On ther other hand, for $d(p,q)\leq 2$ the above intersection has positive dimension and the dimension of the span drops.
	\end{proof}

\section{The singular locus of $\sigma_2(\Gr(k,\mathbb C^N))$}\label{sec:singular locus}

\indent \indent We are now ready to determine the singular locus of the secant variety $\sigma_2(\Gr(k,\mathbb C^N))$, proving that it exactly coincides with the distance-$2$ orbit closure $\overline{\Sigma_{2,\Gr(k,\mathbb C^N)}}$. As in the previous sections, we assume $3\leq k \leq \frac{N}{2}$.

\begin{obs*}
	The secant variety $\sigma_2(\Gr(3,\mathbb C^6))=\mathbb P^{19}$ is the whole space, hence smooth. As a Legendrian variety, the case $\Gr(3,\mathbb C^6)$ has been widely studied in \cite{LM07} and it has been proven that the tangential variety $\tau(\Gr(3,\mathbb C^6))$ has singular locus coinciding with $\overline{\Sigma_{2,\Gr(3,\mathbb C^6)}}$ (corresponding to $\sigma_+$ in \cite{LM07}).
	\end{obs*}

\indent In the respect of the above remark, from now on we consider $\Gr(k,\mathbb C^N)$ for $k\geq 3$ and $N \geq 7$: for any $N\geq 7$, the secant variety $\sigma_2(\Gr(k,\mathbb C^N))$ does not fill up the ambient space $\mathbb P^{\binom{N}{k}-1}$.\\
In the following we show that the singularity of the distance-$2$ orbit $\Sigma_{2,\Gr(k,\mathbb C^N)}$ in the secant variety follows by its tangential-{\em un}identifiability. First, we recall two general lemmas which are well-known to experts: we assume $X$ to be a irreducible smooth projective variety. The first result is a weaker version of classical Terracini's Lemma (cf. \cite[Theorem 1.3.1]{russo2016}).

\begin{lemma}\label{lemma:tangent space inclusion dec}
	Given any $p, q \in X$ and $p+q \in \sigma_2(X)$, the following inclusion holds 
	\[ \left\langle T_p X, T_q X \right\rangle \subset T_{p+q} \sigma_2(X) \ . \]
\end{lemma}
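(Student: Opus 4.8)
The plan is to work with affine cones and the addition morphism. Write $\widehat X\subset\mathbb C^{m+1}$ for the affine cone over $X\subset\mathbb P^m$, and fix vector representatives $\widetilde p,\widetilde q\in\mathbb C^{m+1}$ of $p$ and $q$ with $\widetilde p+\widetilde q\neq 0$; this is possible precisely because $p+q$ denotes a genuine point of $\mathbb P^m$. Consider the addition map
\[
a\colon \widehat X\times\widehat X\longrightarrow \mathbb C^{m+1},\qquad (x,y)\longmapsto x+y .
\]
By the very definition of the secant variety one has $\overline{a(\widehat X\times\widehat X)}=\widehat{\sigma_2(X)}$, the affine cone over $\sigma_2(X)$. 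Since $a$ is the restriction of a linear map, its differential at $(\widetilde p,\widetilde q)$ is $da_{(\widetilde p,\widetilde q)}(u,v)=u+v$ for $(u,v)\in T_{\widetilde p}\widehat X\times T_{\widetilde q}\widehat X$, so the image of $da_{(\widetilde p,\widetilde q)}$ is the linear subspace $T_{\widetilde p}\widehat X+T_{\widetilde q}\widehat X$.

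Next I would invoke the standard fact that for a morphism of affine varieties $f\colon Y\to Z$ and any $y\in Y$ one has $df_y(T_yY)\subset T_{f(y)}\overline{f(Y)}$. Applying this to $a$ at $(\widetilde p,\widetilde q)$ yields
\[
T_{\widetilde p}\widehat X+T_{\widetilde q}\widehat X\ \subset\ T_{\widetilde p+\widetilde q}\,\widehat{\sigma_2(X)} .
\]
Projectivizing (the embedded tangent space $T_pX$ is $\mathbb P(T_{\widetilde p}\widehat X)$, the span $\langle T_pX,T_qX\rangle$ corresponds to $T_{\widetilde p}\widehat X+T_{\widetilde q}\widehat X$, and $T_{p+q}\sigma_2(X)=\mathbb P(T_{\widetilde p+\widetilde q}\widehat{\sigma_2(X)})$) turns this into exactly $\langle T_pX,T_qX\rangle\subset T_{p+q}\sigma_2(X)$.

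There is no serious obstacle here: the only points needing a little care are the choice of representatives so that $\widetilde p+\widetilde q\neq0$ (automatic from the hypothesis $p+q\in\sigma_2(X)$) and the elementary fact that the differential of a morphism maps into the tangent space of the image closure, which does not require $\widetilde p+\widetilde q$ to be a smooth point of $\widehat{\sigma_2(X)}$. Alternatively, the same conclusion follows by a hands-on argument: given a tangent vector at $p$, realize it as $\gamma'(0)$ for an algebraic curve $\gamma$ on $X$ with $\gamma(0)=\widetilde p$; then $t\mapsto\gamma(t)+\widetilde q$ is a curve on $\widehat{\sigma_2(X)}$ through $\widetilde p+\widetilde q$ with that prescribed tangent vector, so the vector lies in $T_{p+q}\sigma_2(X)$; symmetrically for tangent vectors at $q$; and since $T_{p+q}\sigma_2(X)$ is a linear space it then contains the whole span $\langle T_pX,T_qX\rangle$.
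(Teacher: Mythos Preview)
Your proposal is correct. The hands-on curve argument you give at the end is exactly the paper's proof: take a curve $\gamma(t)\subset X$ with $\gamma(0)=p$, $\gamma'(0)=v$, then $\delta(t):=\gamma(t)+q\subset\sigma_2(X)$ exhibits $v\in T_{p+q}\sigma_2(X)$, and symmetry plus linearity give the span. Your main approach via the differential of the addition map $a\colon\widehat X\times\widehat X\to\mathbb C^{m+1}$ is just a repackaging of the same idea in the language of morphisms and their differentials; it buys a cleaner one-line statement (image of $da$ lands in the tangent space of $\overline{\operatorname{im}a}$) at the cost of invoking a general fact, whereas the paper's curve argument is entirely self-contained.
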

\begin{proof}
	By symmetry, it is enough to prove the inclusion $T_p X \subset T_{p+q} \sigma_2(X)$. For any curve $\gamma(t) \subset X$ such that $\gamma(0) = p$ and $\gamma'(0) = v \in T_p X$, the curve $\delta(t) := q + \gamma(t) \subset \sigma_2(X)$ is such that $\delta(0)=q + \gamma(0) = p+q$ and $\delta'(0) = \gamma'(0) = v$, that is $v \in T_{p+q} \sigma_2(X)$.
\end{proof}

\begin{lemma}\label{lemma:tangent space inclusion cactus}
	For any $q \in \tau(X)$ and any $p\in X$ such that $q \in T_p X$, it holds 
	\[T_p X \subset T_{q} \sigma_2(X) \ . \]
\end{lemma}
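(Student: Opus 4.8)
The plan is to observe that the statement is essentially formal once one notices that, for $p\in X$, the embedded tangent space $T_pX$ is a \emph{linear} subspace of the ambient projective space which is entirely contained in $\sigma_2(X)$. Indeed, by the very definition of the tangential variety we have $T_pX\subseteq\tau(X)$, and the chain $X\subseteq\tau(X)\subseteq\sigma_2(X)$ recorded earlier then gives $T_pX\subseteq\sigma_2(X)$ as a closed subvariety. So the whole content of the lemma is that a linear space sitting inside $\sigma_2(X)$ and passing through $q$ must be contained in the tangent space of $\sigma_2(X)$ at $q$.

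Carrying this out: let $q\in T_pX$, which is precisely the hypothesis $q\in T_pX$ of the statement. The inclusion $T_pX\hookrightarrow\sigma_2(X)$ is a closed immersion of varieties with $q$ lying in the source, so the induced map on Zariski tangent spaces $T_q(T_pX)\to T_q\sigma_2(X)$ is injective, i.e.\ $T_q(T_pX)\subseteq T_q\sigma_2(X)$. Since $T_pX$ is a linear subspace of $\mathbb P^M$, it is smooth and coincides with its own embedded tangent space at each of its points, so $T_q(T_pX)=T_pX$. Combining the two facts yields $T_pX\subseteq T_q\sigma_2(X)$, which is the claim. I should stress that this argument does not care whether $q$ is a smooth or a singular point of $\sigma_2(X)$: functoriality of the Zariski tangent space under a closed immersion holds unconditionally, and a singular point only enlarges $T_q\sigma_2(X)$, so the inclusion is if anything easier there.

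There is also a curve-theoretic proof in the spirit of Lemma~\ref{lemma:tangent space inclusion dec}, which one could present instead: choose a curve $c$ in the affine cone $\hat X$ with $c(0)$ representing $p$ and $c'(0)$ representing $q$, so that $t\mapsto\tfrac1t\big(c(t)-c(0)\big)$ is a curve in the cone over $\sigma_2(X)$ through the point representing $q$; then, differentiating suitable two-parameter families of chords $\lambda\,c(t)+\mu\,d(s)$, with $d$ ranging over curves through $p$ realizing an arbitrary $w\in T_pX$, one recovers every direction of $T_pX$ inside the tangent space to $\sigma_2(X)$ at $q$. The only mildly delicate point there is bookkeeping the chord parametrizations so that the degenerate chord lands exactly along the tangent direction; since the linear-space argument above sidesteps this altogether, I would give that as the proof and regard the present lemma — like Lemma~\ref{lemma:tangent space inclusion dec} — as a routine preparatory fact rather than one with a genuine obstacle.
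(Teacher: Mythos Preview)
Your proof is correct and takes essentially the same approach as the paper: both observe that $T_pX$ is a linear subspace contained in $\sigma_2(X)$ and passing through $q$, hence its own tangent space at $q$ embeds into $T_q\sigma_2(X)$. The paper phrases this concretely by taking the line $\gamma(t)=q+t\cdot v$ inside $T_pX\subset\sigma_2(X)$, whereas you invoke functoriality of the Zariski tangent space under closed immersions; these are the same argument in slightly different clothing.
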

\begin{proof}
	Let $q \in \sigma_2(X)$ and $p\in X$ such that  $q \in T_p X$. Given a tangent vector $v \in T_p X$, the line $\gamma(t) := q + t \cdot v$ lies in the tangent space $T_pX\subset \sigma_2(X)$, where the latter inclusion holds by definition of secant variety. Since $\gamma(0) = q$ and $\gamma'(0) = v$, we conclude that $v \in T_q \sigma_2(X)$.
\end{proof}

\begin{lemma}\label{lemma:distance-2 is singular}
	For any $3 \leq k \leq \frac{N}{2}$, the distance-$2$ orbit is singular in the secant variety, i.e.
	\[ \Sigma_{2,\Gr(k,\mathbb C^N)}\subset \Sing\left( \sigma_2\big(\Gr(k,\mathbb C^N)\big)\right) \ . \]
\end{lemma}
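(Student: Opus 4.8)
The strategy is to show that the tangent space to the secant variety at a distance-$2$ point $\theta_2$ has dimension strictly larger than $\dim\sigma_2(\Gr(k,\mathbb C^N))$, so that $\theta_2$ (hence its whole orbit) is a singular point. By homogeneity it suffices to work with a single representative, and the natural choice is $\theta_2\in\Sigma_{2,\Gr(k,\mathbb C^N)}=\Theta_{2,\Gr(k,\mathbb C^N)}$, viewed \emph{as a tangent point} lying on two distinct tangent lines to the Grassmannian. This is exactly where the failure of tangential-identifiability (Remark \ref{rmk:non cactus for distance 2}) enters: $\theta_2$ lies on $T_\omega\Gr(k,\mathbb C^N)$ and on $T_{q_0}\Gr(k,\mathbb C^N)$ for $q_0=e_3\wedge\ldots\wedge e_{k+2}$, with $\omega\neq q_0$.

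\textbf{Key steps.} First I would invoke Lemma \ref{lemma:tangent space inclusion cactus} applied to $q=\theta_2$ at \emph{each} of the two base points: this gives $T_\omega\Gr(k,\mathbb C^N)\subset T_{\theta_2}\sigma_2(\Gr(k,\mathbb C^N))$ and $T_{q_0}\Gr(k,\mathbb C^N)\subset T_{\theta_2}\sigma_2(\Gr(k,\mathbb C^N))$, hence
\[
\langle T_\omega\Gr(k,\mathbb C^N),\, T_{q_0}\Gr(k,\mathbb C^N)\rangle \;\subset\; T_{\theta_2}\sigma_2(\Gr(k,\mathbb C^N)) \ .
\]
Second, I would compute the dimension of the left-hand span. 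Using $\dim(T_pX)+\dim(T_qX)-\dim(T_pX\cap T_qX)$ together with the intersection formula \eqref{intersection of tg spaces} (already established), since $d(\omega,q_0)=2$ the intersection $T_\omega\Gr(k,\mathbb C^N)\cap T_{q_0}\Gr(k,\mathbb C^N)$ is $\bigwedge^{k-1}(H_\omega\cap H_{q_0})\wedge V+\bigwedge^{k-2}(H_\omega\cap H_{q_0})\wedge H_\omega\wedge H_{q_0}$, whose dimension I would compute explicitly (it is governed by $\dim(H_\omega\cap H_{q_0})=k-2$). Third, I would compare: the affine dimension of the span is $2(k(N-k)+1)-\dim(\text{intersection})$, and the claim is that this exceeds $\dim\sigma_2(\Gr(k,\mathbb C^N))+1=2k(N-k)+2$. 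Equivalently the intersection must be "small enough," which is precisely what the $d=2$ computation delivers — and this is in contrast to the generic/distance-$k$ case where Terracini's lemma gives equality.

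\textbf{Main obstacle.} The delicate point is the dimension bookkeeping for the span $\langle T_\omega\Gr, T_{q_0}\Gr\rangle$: one must correctly count the dimension of $\bigwedge^{k-2}(H_\omega\cap H_{q_0})\wedge H_\omega\wedge H_{q_0}$ (taking care of the overlap $H_\omega\cap H_{q_0}$ inside both $H_\omega$ and $H_{q_0}$) and then of the sum with $\bigwedge^{k-1}(H_\omega\cap H_{q_0})\wedge V$, to pin down $\dim(T_\omega\Gr\cap T_{q_0}\Gr)$ exactly. Once that number is in hand, the strict inequality $\dim T_{\theta_2}\sigma_2(\Gr(k,\mathbb C^N))\ge \dim\langle T_\omega\Gr,T_{q_0}\Gr\rangle > \dim\sigma_2(\Gr(k,\mathbb C^N))$ follows by a direct numeric check (using $N\ge 7$, $k\ge 3$), and then $\theta_2\in\Sing(\sigma_2(\Gr(k,\mathbb C^N)))$; by $\SL(N)$-invariance of the singular locus the whole orbit $\Sigma_{2,\Gr(k,\mathbb C^N)}$ is singular, which is the thesis. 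A minor alternative for the last comparison is to instead note that a point on two distinct tangent lines forces $T_{\theta_2}\sigma_2$ to contain the span of two "independent enough" tangent spaces, and that independence is quantified by $d(\omega,q_0)=2<k$; either route reduces to the same elementary count.
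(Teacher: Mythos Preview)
Your approach has a fatal arithmetic error that invalidates the argument. You want to show
\[
\dim\langle T_\omega\Gr(k,\mathbb C^N),\,T_{q_0}\Gr(k,\mathbb C^N)\rangle \;>\; 2k(N-k)+2 \ ,
\]
but the left-hand side equals $2\big(k(N-k)+1\big)-\dim\big(T_\omega\Gr\cap T_{q_0}\Gr\big)$, so your desired inequality is equivalent to $\dim\big(T_\omega\Gr\cap T_{q_0}\Gr\big)<0$, which is impossible. In fact the span of \emph{two} affine tangent spaces to $\Gr(k,\mathbb C^N)$ can never exceed $2k(N-k)+2$, and it attains that value precisely when the intersection is trivial --- which is the generic (distance-$k$) Terracini situation. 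For $d(\omega,q_0)=2$ the intersection you wrote down from \eqref{intersection of tg spaces} is \emph{nonzero}, so the span is \emph{strictly smaller} than $2k(N-k)+2$: you obtain a weaker lower bound on $\dim T_{\theta_2}\sigma_2$, not a stronger one. You have the effect of the nontrivial intersection exactly backwards.

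The paper's proof exploits tangential-unidentifiability in the correct direction: since the tangential-locus of $s_2$ is positive-dimensional, one can pick \emph{more than two} base points $p$ with $s_2\in T_p\Gr(k,\mathbb C^N)$ and, via Lemma~\ref{lemma:tangent space inclusion cactus}, force $T_{s_2}\sigma_2$ to contain the span of \emph{four} tangent spaces $T_\omega\Gr+T_{\mathbbm e_2}\Gr+T_{p_1}\Gr+T_{p_2}\Gr$ (and six in the borderline case $k=3$, $N=7$). A direct count of generators then shows this span has dimension $(N-k)(4k-4)-4k+6$, which does exceed $2k(N-k)+2$ under the hypotheses. The missing idea in your plan is precisely this: two tangent spaces are never enough to overshoot the secant dimension, so one must harvest additional tangent spaces from the large tangential-locus.
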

\begin{proof}
	By homogeneity, it is enough to prove the singularity of the representative
	\[ s_2 = \omega + \mathbbm{e}_2 = e_1 \wedge \dots \wedge e_k + e_1 \wedge \dots \wedge e_{k-2} \wedge e_{k+1} \wedge e_{k+2} \ . \]
	From Lemma \ref{lemma:tangent space inclusion dec} we know that $\left\langle T_{\omega} \Gr(k,\mathbb C^N),T_{\mathbbm{e}_2} \Gr(k,\mathbb C^N)\right\rangle \subset T_{s_2} \sigma_2(\Gr(k,\mathbb C^N))$.
	On the other hand, from Remark \ref{rmk:non cactus for distance 2} we deduce that the tangential-locus of $s_2$ contains the points 
	\begin{align*} &p_1 = e_1 \wedge \dots \wedge e_{k-2} \wedge e_{k-1} \wedge e_{k+1}, \quad p_2 = e_1 \wedge \dots \wedge e_{k-2} \wedge e_{k-1} \wedge e_{k+2} \\ 
	&p_3 = e_1 \wedge \dots \wedge e_{k-2} \wedge e_{k} \wedge e_{k+1}, \quad p_4 = e_1 \wedge \dots \wedge e_{k-2} \wedge e_{k} \wedge e_{k+2} \ ,
	\end{align*}
	that is $s_2\in T_{p_i}\Gr(k,\mathbb C^N)$ for any $i=1:4$, hence from Lemma \ref{lemma:tangent space inclusion cactus} we get the inclusions $T_{p_i}\Gr(k,\mathbb C^N) \subset T_{s_2} \sigma_2(\Gr(k,\mathbb C^N))$ for any $i=1:4$. In particular, since $T_{s_2}\sigma_2(\Gr(k,\mathbb C^N))$ is a linear space, it must contain the sum 
	\begin{align} 
	& \ \ \ \ \ \ \ T_{\omega} \Gr(k,\mathbb C^N) + T_{\mathbbm{e}_2} \Gr(k,\mathbb C^N) + T_{p_1} \Gr(k,\mathbb C^N) + T_{p_2} \Gr(k,\mathbb C^N) = \nonumber \\
	& = \left( \bigwedge^{k-1}H_\omega\wedge V \right) + \left( \bigwedge^{k-1}H_{\mathbbm{e}_2}\wedge V \right) + \left( \bigwedge^{k-1}H_{p_1}\wedge V \right) + \left( \bigwedge^{k-1}H_{p_2}\wedge V \right) \ . \label{sum of 4 tg spaces} 
	\end{align}
	Given $E_{k-2}:=\langle e_1,\ldots, e_{k-2}\rangle_{\mathbb C}$, for any $p \in \{\omega, \mathbbm{e_2},p_1,p_2\}$ one has $E_{k-2}\subset H_p$ and
	\[
	\bigwedge^{k-1}H_p\wedge V \ = \  \bigwedge^{k-2}E_{k-2}\wedge \frac{H_p}{E_{k-2}} \wedge \frac{V}{H_p} \ \oplus \ \bigwedge^{k-2}E_{k-2}\wedge \bigwedge^2\frac{H_p}{E_{k-2}} \ \oplus \ \bigwedge^{k-3}E_{k-2}\wedge \bigwedge^2\frac{H_p}{E_{k-2}} \wedge \frac{V}{H_p} \ ,
	\]
	and an easy computation of the generators (and their repetitions among the four tangent spaces) shows that the sum in \eqref{sum of 4 tg spaces} has dimension $(N-k)(4k-4)-4k + 6$. If we prove that 
	\[ (N-k)(4k-4)-4k + 6 \gneq 2(N-k)k + 2 = \dim \sigma_2 (\Gr(k,\mathbb C^N))+1 \]
	we are done. Notice that the above strict inequality is equivalent to $(N-k)(2k-4)-4(k-1) \gneq 0$. Moreover, since $N \geq 2k$, it holds $(N-k)(2k-4)-4(k-1) \geq 2k^2 -8k + 4$. \\
	\indent Now, for $k\geq 4\gneq 2+\sqrt{2}$ one gets $2k^2 -8k + 4 \gneq 0$, while for $k = 3$ and $N \geq 8$ one has $(N-k)(2k-4)-4(k-1) = 2(N-3) - 8 \gneq 0$. Finally, for $k=3$ and $N = 7$, Lemma \ref{lemma:tangent space inclusion cactus} implies that the tangent space $T_{s_2} \sigma_2(\Gr(3,\mathbb C^7))$ must contain the sum
	\[ T_{\omega} \Gr(3,\mathbb C^7) + T_{\mathbbm{e}_2} \Gr(3,\mathbb C^7) + T_{p_1} \Gr(3,\mathbb C^7) + T_{p_2} \Gr(3,\mathbb C^7) + T_{p_3} \Gr(3,\mathbb C^7) + T_{p_4} \Gr(3,\mathbb C^7) \]
	which, by similar computations as above, has dimension $30\gneq 26 =\dim \sigma_2(\Gr(3,\mathbb C^7))+1$. In each one of the above cases we conclude that $s_2$ is singular in $\sigma_2(\Gr(k,\mathbb C^N))$, hence $\Sigma_{2,\Gr(k,\mathbb C^N)}$ is so.
	\end{proof}

\indent We are left with proving the smoothness of all the secant and tangent orbits of distance greater or equal than $3$.

\begin{obs}\label{rmk:smoothness of theta3 is enough}
	In order to prove the inclusion $\Sing(\sigma_2(\Gr(k,\mathbb C^N)))\subset \overline{\Sigma_{2,\Gr(k,\mathbb C^N)}}$, it is enough to prove the smoothness for the distance-3 tangent orbit $\Theta_{3,Gr(k,\mathbb C^N)}$, as it is contained in the closure of all the orbits (both secant and tangent) of distance greater or equal than $3$.
	\end{obs}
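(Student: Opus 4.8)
The plan is to combine the $\SL(N)$-orbit stratification of Theorem~\ref{thm:orbit stratification} with the elementary fact that the singular locus of a variety acted on by an algebraic group is closed and invariant, hence a union of orbit closures. Since $\sigma_2(\Gr(k,\mathbb C^N))$ has finitely many orbits, $\Sing(\sigma_2(\Gr(k,\mathbb C^N)))$ is a \emph{finite} union of closures $\overline{O}$ of orbits $O$ it meets. By Lemma~\ref{lemma:distance-2 is singular} we already have $\overline{\Sigma_{2,\Gr(k,\mathbb C^N)}}=\Gr(k,\mathbb C^N)\sqcup\Sigma_{2,\Gr(k,\mathbb C^N)}\subseteq\Sing(\sigma_2(\Gr(k,\mathbb C^N)))$ (using $\Gr(k,\mathbb C^N)=\Sigma_{1,\Gr(k,\mathbb C^N)}\subseteq\overline{\Sigma_{2,\Gr(k,\mathbb C^N)}}$ from Proposition~\ref{IncOrb:Sec}), so only the reverse inclusion is at stake; and by the orbit partition the orbits that could a priori still appear in the singular locus are exactly the $\Sigma_{l,\Gr(k,\mathbb C^N)}$ and $\Theta_{l,\Gr(k,\mathbb C^N)}$ with $l\geq 3$.

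The key geometric input is that $\Theta_{3,\Gr(k,\mathbb C^N)}$ sits at the bottom, in the closure order, of this family of distance-$\geq 3$ orbits, i.e. $\Theta_{3,\Gr(k,\mathbb C^N)}\subseteq\overline{O}$ for every such orbit $O$. For $O=\Theta_{l,\Gr(k,\mathbb C^N)}$ this is the chain $\Theta_{3,\Gr(k,\mathbb C^N)}\subseteq\overline{\Theta_{4,\Gr(k,\mathbb C^N)}}\subseteq\cdots\subseteq\overline{\Theta_{k,\Gr(k,\mathbb C^N)}}$ of Proposition~\ref{IncOrb:Tang}. For $O=\Sigma_{l,\Gr(k,\mathbb C^N)}$ with $l\geq 3$ one combines this with Proposition~\ref{IncOrc:SecVsTang}, which gives $\Theta_{l,\Gr(k,\mathbb C^N)}\subseteq\overline{\Sigma_{l,\Gr(k,\mathbb C^N)}}$ and hence $\overline{\Theta_{l,\Gr(k,\mathbb C^N)}}\subseteq\overline{\Sigma_{l,\Gr(k,\mathbb C^N)}}$, so that $\Theta_{3,\Gr(k,\mathbb C^N)}\subseteq\overline{\Theta_{l,\Gr(k,\mathbb C^N)}}\subseteq\overline{\Sigma_{l,\Gr(k,\mathbb C^N)}}$. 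This is just a reading of the Hasse diagram in Theorem~\ref{thm:orbit stratification}.

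Then I would close the reduction by contradiction: suppose $\Theta_{3,\Gr(k,\mathbb C^N)}$ consists of smooth points of $\sigma_2(\Gr(k,\mathbb C^N))$ — equivalently, by homogeneity, a representative $\theta_3$ is a smooth point — but $\Sing(\sigma_2(\Gr(k,\mathbb C^N)))\not\subseteq\overline{\Sigma_{2,\Gr(k,\mathbb C^N)}}$. Then the singular locus contains some orbit $O\not\subseteq\overline{\Sigma_{2,\Gr(k,\mathbb C^N)}}$, hence $O=\Sigma_{l,\Gr(k,\mathbb C^N)}$ or $\Theta_{l,\Gr(k,\mathbb C^N)}$ with $l\geq 3$; since $\Sing$ is closed, $\overline{O}\subseteq\Sing(\sigma_2(\Gr(k,\mathbb C^N)))$, and by the previous paragraph $\Theta_{3,\Gr(k,\mathbb C^N)}\subseteq\overline{O}$, so $\Theta_{3,\Gr(k,\mathbb C^N)}\subseteq\Sing(\sigma_2(\Gr(k,\mathbb C^N)))$ — a contradiction. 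The one point to get right is the \emph{direction} of the reduction: smoothness does not propagate upward along orbit degenerations, so it is essential that $\Theta_{3,\Gr(k,\mathbb C^N)}$ be the minimal orbit among those of distance $\geq 3$ — a singular larger orbit $O$ would drag $\overline{O}$, and with it $\Theta_{3,\Gr(k,\mathbb C^N)}\subseteq\overline{O}$, into the singular locus. Beyond this, the argument is pure bookkeeping with the orbit diagram and uses no new computation; the genuine work, namely verifying that $\Theta_{3,\Gr(k,\mathbb C^N)}$ is actually smooth (via a tangent-space dimension estimate as in Lemma~\ref{lemma:distance-2 is singular}), is what remains to be done afterwards.
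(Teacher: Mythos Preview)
Your argument is correct and follows exactly the reasoning the paper has in mind: the remark in the paper is not given a separate proof but simply invokes the orbit stratification (Theorem~\ref{thm:orbit stratification}, via Propositions~\ref{IncOrb:Tang} and~\ref{IncOrc:SecVsTang}) together with the $\SL(N)$-invariance and closedness of the singular locus, which is precisely what you spell out. Your write-up is just a more detailed version of the same idea.
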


\noindent In the following we prove the smoothness in $\sigma_2(\Gr(k,\mathbb C^N))$ of the tangent point
\[ q_3:=e_2\wedge \ldots e_k \wedge e_{k+1}- e_1\wedge e_3 \wedge \ldots \wedge e_k \wedge e_{k+2} + e_1\wedge e_2 \wedge e_4 \wedge \ldots \wedge e_k \wedge e_{k+3} \ \in T_{\omega}\Gr(k,\mathbb C^N) \]
lying in the orbit $\Theta_{3,\Gr(k,\mathbb C^N)}$ (it differs from the representative $\theta_3$ by a sign).\\ \indent Consider the dual space $(\mathbb C^N)^\vee$ with coordinates $(x_1,\ldots, x_N)$ such that $x_i(e_j)=\delta_{ij}$. We denote by $\cal I(q)\subset \bigwedge^\bullet (\mathbb C^N)^\vee$ the ideal of a point $q \in \sigma_2(\Gr(k,\mathbb C^N))$, where $\bigwedge^\bullet (\mathbb C^N)^\vee$ acts on $\bigwedge^k\mathbb C^N$ via the {\em skew-apolarity action} \cite[Definition 4]{ABMM21}: namely, for any $\bold{x}_J:=x_{j_1}\wedge \ldots \wedge x_{j_h}\in \bigwedge^h(\mathbb C^N)^\vee$ and any $\bold{v}_I:=v_{i_1}\wedge \ldots\wedge v_{i_k}\in \bigwedge^k\mathbb C^N$ one has 
\[ \bold{x}_J(\bold{v}_I)=\begin{cases}
0 & \text{if } h\gneq k\\
\sum_{R \subset I,\ |R|=h} (-1)^{sign(R)} \det(x_{j}(v_r)) \bold{v}_{I \setminus R} & \text{if } h\leq k 
\end{cases} \ , \]
\noindent where: the sum runs over all possible ordered subsets $R = \{r_1,\ldots,r_h\}\subset I$; $(x_j(v_{r})) =$ $(x_j(v_{r}))_{j \in J, r\in R}$ denotes the $h \times h$ matrix whose $j$-th row is $(x_j(v_{r_1}) \dots x_j(v_{r_h}))$; $v_{I \setminus R}$ denotes the wedge-product obtained from $v_{i_1} \wedge \dots \wedge v_{i_k}$ by removing the entries whose index appear in $R$; $sign(R)$ is the sign of the permutation that sends the ordered sequence $I$ to the ordered sequence $\{R,I\setminus R\}$ by keeping the order of the elements not contained in $R$.\\
Let $(\cal I(q)^2)_k$ be the $k$-th graded component of its squared ideal and let
\[ (\cal I(q)^2)_k^\perp :=\left\{ v \in \bigwedge^k \mathbb C^N \ | \ \alpha(v)=0 \ \forall \alpha \in \cal I(q)^2 \right\}\]
be the subspace of $\bigwedge^k\mathbb C^N$ orthogonal to $\cal I(q)^2$.

\begin{lemma}\label{lemma:ideal}
	For any $q \in \sigma_2(\Gr(k,\mathbb C^N))$, it holds 
	\[T_q\sigma_2(\Gr(k,\mathbb C^N))\subset (\cal I(q)^2)_k^{\perp} \ . \]
	\end{lemma}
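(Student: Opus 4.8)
The inclusion $T_q\sigma_2(\Gr(k,\mathbb C^N))\subset(\cal I(q)^2)_k^\perp$ amounts to saying that every element of $(\cal I(q)^2)_k$, regarded via the skew-apolarity action as a linear form on $\bigwedge^k\mathbb C^N$, vanishes on the (affine) tangent space $T_q\sigma_2(\Gr(k,\mathbb C^N))$. Since $\cal I(q)$ is an ideal, $(\cal I(q)^2)_k$ is spanned by the wedge products $\alpha\wedge\beta$ with $\alpha\in\cal I(q)_a$, $\beta\in\cal I(q)_b$, $a+b=k$ and $a,b\ge 1$; so the plan is to show that each such $\alpha\wedge\beta$ annihilates $T_q\sigma_2(\Gr(k,\mathbb C^N))$. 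For this it suffices to produce, for each pair $(\alpha,\beta)$, a polynomial $F_{\alpha,\beta}$ on $\bigwedge^k\mathbb C^N$ which vanishes on $\sigma_2(\Gr(k,\mathbb C^N))$ and whose differential at $q$ is a nonzero multiple of the functional $v\mapsto(\alpha\wedge\beta)(v)$: then $(\alpha\wedge\beta)(v)=0$ for all $v\in T_q\sigma_2(\Gr(k,\mathbb C^N))$.

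The equations $F_{\alpha,\beta}$ I would use come from skew-flattenings. For $1\le a\le k-1$ consider the linear map $\mathrm{Fl}_a(w)\colon\bigwedge^a(\mathbb C^N)^\vee\to\bigwedge^{k-a}\mathbb C^N$, $\xi\mapsto\xi(w)$, whose transpose is $\pm\,\mathrm{Fl}_{k-a}(w)$. If $w=p_1+p_2$ with $p_1,p_2\in\Gr(k,\mathbb C^N)$, then $\xi(w)\in\bigwedge^{k-a}H_{p_1}+\bigwedge^{k-a}H_{p_2}$, so $\mathrm{rank}\,\mathrm{Fl}_a(w)\le 2\binom ka$; hence the $(2\binom ka+1)$-minors of $\mathrm{Fl}_a(w)$ are equations of $\sigma_2(\Gr(k,\mathbb C^N))$. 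Whenever $\mathrm{rank}\,\mathrm{Fl}_a(q)$ attains this maximum one can pick a $(2\binom ka+1)$-submatrix of $\mathrm{Fl}_a(q)$ of corank exactly $1$, and the differential at $q$ of the corresponding minor is a nonzero multiple of the rank-one functional $v\mapsto(\eta\wedge\xi)(v)$, where $\xi$ spans $\ker\mathrm{Fl}_a(q)=\cal I(q)_a$ and $\eta$ spans $\ker\mathrm{Fl}_{k-a}(q)=\cal I(q)_{k-a}$; letting the submatrix, and with it $\xi,\eta$, vary one recovers every generator $\alpha\wedge\beta$ of $(\cal I(q)^2)_k$ up to scalars. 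A clean prototype of this mechanism is the equality $(\cal I(p)^2)_k^\perp=\bigwedge^{k-1}H_p\wedge\mathbb C^N=T_p\Gr(k,\mathbb C^N)$ for a Grassmannian point $p$, which is a reformulation of the fact that $\Gr(k,\mathbb C^N)$ is cut out by its Pl\"ucker quadrics and that these lie in $\cal I(p)^2$.

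The delicate case is the one in which $q$ lies on a deeper orbit, so that $\mathrm{rank}\,\mathrm{Fl}_a(q)<2\binom ka$ — exactly what happens at the tangent representatives $\theta_l$ with $l<k$. There the $(2\binom ka+1)$-minors above have corank $\ge 2$ at $q$, hence vanish to order $\ge 2$ and carry no first-order information, and one must replace them by finer equations of $\sigma_2(\Gr(k,\mathbb C^N))$ — suitable combinations of flattening entries with the Pl\"ucker relations of $\Gr(k,\mathbb C^N)$ (``sub-Pl\"ucker'' minors) — which still vanish on $\sigma_2(\Gr(k,\mathbb C^N))$ but now have the required differential at $q$. Equivalently, one can argue by degeneration: writing $q$ as a limit of secant points $p_1(t)+p_2(t)$ through the abstract secant variety of Section \ref{sec:terracini locus}, a tangent vector $v\in T_q\sigma_2(\Gr(k,\mathbb C^N))$ is transported by the family, and the vanishings $\alpha(q)=\beta(q)=0$ force $\alpha,\beta\in\cal I(p_i(0))$ along each limiting decomposition $q=p_1(0)+p_2(0)$ as soon as $\deg\alpha,\deg\beta<d(p_1(0),p_2(0))$ (because then $\alpha(p_1(0))=-\alpha(p_2(0))$ lies in $\bigwedge^{\,k-\deg\alpha}(H_{p_1(0)}\cap H_{p_2(0)})=0$), reducing the claim to the rank-one identity above. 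The remaining ranges of $(\deg\alpha,\deg\beta)$, together with the second-order contributions to $T_q\sigma_2(\Gr(k,\mathbb C^N))$ produced by the collision $p_1(0)=p_2(0)$, are the genuinely hard point and require a direct computation along the degenerating family — this is the step I expect to be the main obstacle.
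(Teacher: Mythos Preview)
Your plan is entirely different from the paper's, and far heavier. The paper does \emph{not} produce any equations of $\sigma_2(\Gr(k,\mathbb C^N))$ at all: it takes $v\in T_q\sigma_2(\Gr(k,\mathbb C^N))$ as the velocity $\gamma'(0)$ of a curve $\gamma\subset\sigma_2(\Gr(k,\mathbb C^N))$ through $q$, writes a generator of $(\cal I(q)^2)_k$ as $\alpha=\sum_j f_j\wedge g_j$ with $f_j,g_j\in\cal I(q)$, and evaluates $\alpha(v)=\tfrac{d}{dt}\alpha(\gamma(t))\big|_{t=0}$ via a Leibniz-type rule for the skew-apolarity pairing, obtaining $\sum_j\big(f_j'\wedge g_j(q)\pm g_j'\wedge f_j(q)\big)=0$ because $f_j(q)=g_j(q)=0$. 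That is the whole argument; it is local and uniform in $q$, so the orbit of $q$ plays no role and the ``delicate case'' you isolate never arises.

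By contrast, you are trying to prove the much stronger assertion that every $\alpha\in(\cal I(q)^2)_k$ is the differential at $q$ of some global equation $F_{\alpha,\beta}\in I(\sigma_2(\Gr(k,\mathbb C^N)))$. This requires controlling the ideal of the secant variety, which is not known in general, and your candidate equations (minors of skew-flattenings) fail exactly at the point where the lemma is actually applied in the paper: at the tangent representative $q_3\in\Theta_{3,\Gr(k,\mathbb C^N)}$ the flattenings have subgeneric rank, the relevant minors vanish to order $\geq 2$, and you are left without a supply of equations with the prescribed differential. Your fallback degeneration argument is circular: the ``second-order contributions produced by the collision $p_1(0)=p_2(0)$'' that you postpone are precisely the tangent directions to $\sigma_2(\Gr)$ at $q$ not accounted for by $T_{p_1}\Gr+T_{p_2}\Gr$, and bounding those is exactly the content of the lemma you are trying to prove. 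So the gap you flag at the end is not a technicality but the heart of the matter, and your route does not obviously close it --- whereas the paper's direct differentiation sidesteps the issue entirely.
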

\begin{proof}
	Consider $v \in T_q\sigma_2(\Gr(k,\mathbb C^N))$
	being the direction of a curve $\gamma(t)\subset \sigma_2(\Gr(k,\mathbb C^N))$ passing through $\gamma(0)=q$. As any $\alpha \in (\cal I(q)^2)_k$ is continuous and linear, being a derivation, one has
	\begin{align*} 
		\alpha (v) & = \alpha \left( \lim_{t\rightarrow 0}\frac{\gamma(t)-q}{t} \right) \stackrel{cont}{=} \lim_{t\rightarrow 0}\frac{\alpha(\gamma(t)-q)}{t} \stackrel{lin}{=} \frac{d}{dt}\alpha (\gamma(t))_{\big|_{t=0}}= \frac{d\alpha}{d\gamma(t)}_{\big|_{\gamma(0)=q}}\!\!\!\!\!\cdot \underbrace{\frac{d\gamma(t)}{dt}_{\big|_{t=0}}}_{=v} \ . 
		\end{align*}
	Since $\alpha \in \cal (I(q)^2)_k$, we can write $\alpha=\sum_jf_j\wedge g_j$ for some $f_j,g_j \in \cal I(q)$. Hence
	\[ \alpha(v) = \sum_j\left[ \frac{df_j}{d\gamma(t)}\wedge g_j + f_j \wedge \frac{dg_j}{d\gamma(t)} \right]_{\big|_{\gamma(0)=q}} \cdot v = \sum_j \left[ f_j'\wedge \underbrace{g_j(q)}_{=0} - g_j'\wedge \underbrace{f_j(q)}_{=0}\right] \cdot v = 0 \ . \]
	\end{proof}

The next step is to compute the dimension of $(\cal I(q_3)^2)_k^\perp$. As it is clear that the multiplication we consider is the wedge product, in the following we lighten up the notation by omitting the wedges: for instance, $x_{i}x_j$ means $x_i\wedge x_j$. The ideal of $q_3$ is generated by
\begin{align*} 
	\cal I(q_3) = ( & \underbrace{x_{k+4} \ , \ldots, \ x_N}_{(1)} \ , \ \underbrace{x_1x_{k+1} \ , \  x_2x_{k+2} \ , \ x_3,x_{k+3}}_{(2)} \ , \\
	& \underbrace{x_{k+1}x_{k+2} \ , \ x_{k+1}x_{k+3} \ , \ x_{k+2}x_{k+3}}_{(3)} \ ,\\
	& \underbrace{x_2x_{k+1}+x_1x_{k+2} \ , \ x_3x_{k+1}+x_1x_{k+3} \ , \ x_2x_{k+3}+x_3x_{k+2}}_{(4)}  \ ) \subset \bigwedge^\bullet (\mathbb C^N)^\vee \ ,
	\end{align*}
and a direct computation shows that the generators of the squared ideal $\cal I(q_3)^2$ are
\begin{align*}
(1)^2 & \ \ \ \ \  x_ix_j \ \forall i,j\in \{k+4, \ldots, N\} \\
(1)(2) & \ \ \ \ \  x_ix_jx_{k+j} \ \forall i \in \{k+4,\ldots,N\}, \ \forall j \in \{1,2,3\} \\
(1)(3) & \ \ \ \ \ x_ix_{k+j}x_{k+s} \ \forall i \in \{k+4,\ldots,N\}, \ \forall j,s \in \{1,2,3\} \\
(2)^2 & \ \ \ \ \ x_jx_sx_{k+j}x_{k+s} \ \forall j,s \in \{1,2,3\} \\
(2)(3) & \ \ \ \ \ x_jx_{k+1}x_{k+2}x_{k+3} \ \forall j \in \{1,2,3\} \\
(1)(4) & \ \ \ \ \ x_i(x_2x_{k+1}+x_1x_{k+2}) \ , \ x_i(x_3x_{k+1}+x_1x_{k+3}) \ , \ x_i(x_2x_{k+3}+x_3x_{k+2}) \ \forall i \in \{k+4,\ldots, N\}\\
(2)(4) & \ \ \ \ \ x_3x_{k+3}(x_2x_{k+1}+x_1x_{k+2}) \ , \ x_2x_{k+2}(x_3x_{k+1}+x_1x_{k+3}) \ , \ x_1x_{k+1}(x_2x_{k+3}+x_3x_{k+2}) \ .
\end{align*}
Consider the sets of generators $A:=\{(1)^2, (1)(2), (1)(3)\}$ and $B:=\{ A, (2)^2, (2)(3) \}$. In particular, it holds
\[ (\cal I(q_3)^2)_k^\perp \ = \ (B)_k^\perp \cap \big( (1)(4),(2)(4) \big)_k^\perp \ . \]
\noindent First we compute the dimension of $(B)_k^\perp$ and then we compute the linearly independent relations imposed by the generators $(1)(4)$ and $(2)(4)$. Given $[r]:=\{1,\ldots, r\}$ for any $r \in \mathbb N$ and given $s \leq r$, we denote by $\binom{[r]}{s}$ the set of all subsets of $[r]$ having $s$ distinct elements. In the following we distinguish the cases $k=3$ and $k\geq 4$: the arguments are the same although the computations are slightly different as $(\cal I(q_3)^2)_3=\big( A, (1)(4)\big)_3$.

\begin{prop}\label{prop:sing k=3}
	For any $N \geq 7$, the subspace $(\cal I(q_3)^2)_3^\perp$ has dimension $6(N-3)+2$, i.e. 
	\[ \dim (\cal I(q_3)^2)_3^\perp =\dim \sigma_2(\Gr(3,\mathbb C^N)) +1 \ . \]
	\end{prop}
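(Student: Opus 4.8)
The plan is to compute the orthogonal space $(\cal I(q_3)^2)_3^\perp$ directly, exploiting that in $\bigwedge^3\mathbb C^N$ a degree-$3$ monomial $x_{j_1}\wedge x_{j_2}\wedge x_{j_3}\in\bigwedge^3(\mathbb C^N)^\vee$ acts on the monomial basis $\{e_I\}_{I\in\binom{[N]}{3}}$ as, up to sign, the coefficient functional $e_I\mapsto \delta_{I,\{j_1,j_2,j_3\}}$; so the orthogonal of a family of degree-$3$ monomials is simply the span of the basis vectors they fail to detect, whereas the non-monomial generators of type $(1)(4)$ impose honest linear relations among these coefficients. The first step is to record that, for $k=3$, the space $(\cal I(q_3)^2)_3$ is spanned by the degree-$3$ monomials $x_l\wedge x_i\wedge x_j$ lying in the ideal generated by family $(1)^2$ (that is, $l\in[N]$ and $7\le i<j\le N$), together with the generators of families $(1)(2)$, $(1)(3)$ and $(1)(4)$; the families $(2)^2$, $(2)(3)$, $(2)(4)$ have degree $4$ and hence annihilate all of $\bigwedge^3\mathbb C^N$, which recovers the identity $(\cal I(q_3)^2)_3^\perp=(A)_3^\perp\cap\big((1)(4)\big)_3^\perp$ recorded just before the statement.

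Next I would peel off the monomial part. Since a monomial $x_l\wedge x_i\wedge x_j$ with $7\le i<j\le N$ detects exactly $e_l\wedge e_i\wedge e_j$, its orthogonal inside $\bigwedge^3\mathbb C^N$ is the span of the $e_I$ having at most one index $\ge 7$; writing $U:=\langle e_1,\dots,e_6\rangle$, this is $W_0\oplus W_1$ with $W_0:=\bigwedge^3 U$ and $W_1:=\bigwedge^2 U\wedge\langle e_7,\dots,e_N\rangle$, so $\dim W_0=\binom{6}{3}=20$ and $\dim W_1=\binom{6}{2}(N-6)=15(N-6)$ (for $N=7$ this is already all of $\bigwedge^3\mathbb C^7$). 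Now every generator of families $(1)(2)$, $(1)(3)$, $(1)(4)$ carries a factor $x_i$ with $i\ge 7$ and therefore annihilates $W_0$; hence $W_0\subset(\cal I(q_3)^2)_3^\perp$, contributing $20$, and all that is left is to intersect $W_1$ with the kernels of those generators.

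The remaining work is on $W_1$. Each generator of families $(1)(2)$, $(1)(3)$, $(1)(4)$ is supported on a single summand $\bigwedge^2 U\wedge\langle e_i\rangle$ of $W_1$, and the pattern is identical in every summand, so it suffices to analyse one copy $\bigwedge^2 U\cong\mathbb C^{15}$ (with coordinates indexed by $\binom{[6]}{2}$) and multiply the surviving dimension by $N-6$. In one copy, family $(1)(2)$ kills the three coordinates indexed by $\{1,4\}$, $\{2,5\}$, $\{3,6\}$, and family $(1)(3)$ kills those indexed by $\{4,5\}$, $\{4,6\}$, $\{5,6\}$: these six pairs are distinct, so a $9$-dimensional subspace survives. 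Finally family $(1)(4)$ imposes the three relations $c_{\{2,4\}}=\pm c_{\{1,5\}}$, $c_{\{3,4\}}=\pm c_{\{1,6\}}$, $c_{\{2,6\}}=\pm c_{\{3,5\}}$; each couples two of the nine surviving coordinates and the three relations are supported on pairwise disjoint coordinate sets, hence independent, leaving $6$ dimensions per copy. Summing up,
\[ \dim(\cal I(q_3)^2)_3^\perp = 20 + 6(N-6) = 6(N-3)+2 = \dim\sigma_2(\Gr(3,\mathbb C^N))+1 \ . \]

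The only genuinely laborious point will be the finite bookkeeping inside $\bigwedge^2 U\cong\mathbb C^{15}$: identifying precisely which coordinate functional each monomial of $(1)(2)$ and $(1)(3)$ is, checking that the six pairs they kill are distinct, and verifying that the three $(1)(4)$-relations live on pairwise disjoint coordinates that are not among the ones already killed, so that no further collapse occurs. One should also keep an eye on the signs produced by the skew-apolarity action, though these are irrelevant for the dimension count; everything else is the structural splitting of the orthogonal of the monomial part as $W_0\oplus W_1$ together with the observation that every surviving generator carries an index $\ge 7$, which makes the computation on $W_1$ reduce to a single $15$-dimensional block.
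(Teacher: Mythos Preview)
Your argument is correct and reaches the same conclusion as the paper by a slightly different bookkeeping. The paper organizes the monomial generators of $(A)_3^\perp$ by Hamming distance from $\omega=e_1\wedge e_2\wedge e_3$, obtaining $\dim(A)_3^\perp=9(N-3)-7$, and then subtracts the $3(N-6)$ independent relations coming from $(1)(4)$. You instead split according to the number of indices in $\{7,\dots,N\}$: the block $W_0=\bigwedge^3\langle e_1,\dots,e_6\rangle$ is seen at once to lie in the orthogonal (all remaining generators carry a factor $x_i$ with $i\ge 7$), and the analysis on $W_1$ reduces to a single $15$-dimensional computation repeated $N-6$ times. The two counts agree at every stage (your $20+9(N-6)$ equals the paper's $9(N-3)-7$), so the substance is identical; your organization is arguably a bit more structural in that it isolates one finite block independent of $N$, while the paper's organization by Hamming distance ties in more visibly with the rest of the article.
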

\begin{proof}
	Since $(\cal I(q_3)^2)_3^\perp=\big(A, (1)(4) \big)_3^\perp$, first we study $(A)_3^\perp$ and then we cut it by the relations obtained from $(1)(4)$.\\
	\indent Since $A$ is given by monomials, also $(A)_3^\perp$ has to be spanned by monomials of the form $e_{i_1}\wedge e_{i_2}\wedge e_{i_3}$. Let $\omega=e_1\wedge e_2\wedge e_3$ be the (unique by tangential-identifiability) point of tangency of $q_3$. Set $e_{i}\wedge e_{j}\wedge e_{\ell}$ a possible generator of $(B)_3^\perp$, and $d:=d(\omega,e_{i}\wedge e_{j}\wedge e_{\ell})$.
	\begin{itemize}
		\item If $d=3$, then $\{i,j,\ell\}\subset \{4,\ldots, N\}$. Since $e_i\wedge e_j \wedge e_\ell$ has to vanish on $A$, the conditions from $(1)^2$ impose that at least two indices lie in $\{4,5,6\}$. However, the conditions from $(1)(3)$ imply that there cannot be an index lying in $\{7,\ldots, N\}$. Thus the only possibility is that $\{i,j,\ell\}=\{4,5,6\}$, leading to a unique generator of $(A)_3^\perp$ having distance $3$ from $\omega$.
		\item If $d=2$, then we may assume $i \in \{1,2,3\}$ and $\{j,\ell\}\subset \{4,\ldots, N\}$. The relations from $(1)^2$ imply that $\{j,\ell\}\nsubseteq \{7,\ldots, N\}$. If $j\in \{4,5,6\}$ and $\ell\in \{7,\ldots , N\}$, the conditions $(1)(2)$ impose that $j\neq 3+i$, thus one gets $3\cdot 2\cdot (N-6)$ generators. On the other hand, the case $\{j,\ell\}\subset \{4,5,6\}$ leads to other $3\cdot 3$ generators.
		\item The case $d=1$ leads to $3(N-3)$ generators as from Remark \ref{tangent space:dist} these elements span the tangent space at $\omega$.
		\item The case $d=0$ trivially leads to the generator $\omega$ itself.
		\end{itemize}
	As the above generators are all linearly independent, we get $\dim(A)_3^\perp=9(N-3)-7$.\\
	\indent Next we impose on $(A)_3^\perp$ the equations from $(1)(4)$: since the $3(N-6)$ elements in $(1)(4)$ impose linearly independent relations on $(A)_3^\perp$ we get 
	\[ \dim (\cal I(q_3)^2)_3^\perp  = \dim(B)_3^\perp - \dim\langle (1)(4)\rangle = 9(N-3)-7-3(N-6) = 6(N-3)+2 \ . \]
	\end{proof}

\begin{prop}\label{prop:dim B}
	In the above notation, for $k\geq 4$ the dimension of $(B)_k^\perp$ is
	\[\dim (B)_k^\perp= 5 + 3(N-k-3)(k-1) + 6(k-2) + k(N-k) \ . \]
	\end{prop}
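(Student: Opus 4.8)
The plan is to reduce the statement to a combinatorial count of monomials and then carry that count out by casework on how a $k$-subset meets four natural blocks of indices.

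First, since every element of $B$ is a monomial in the $x_i$, the degree-$k$ part $(B)_k$ of the ideal it generates is spanned by the monomials $x_L$ with $|L|=k$ that contain some generator of $B$; because the monomial bases of $\bigwedge^k\mathbb C^N$ and $\bigwedge^k(\mathbb C^N)^\vee$ are dual up to sign under the skew-apolarity action, the space $(B)_k^\perp$ is therefore spanned exactly by those monomials $e_I=e_{i_1}\wedge\cdots\wedge e_{i_k}$, $I\subseteq\{1,\dots,N\}$ with $|I|=k$, such that $J\not\subseteq I$ for every generator $J$ of $B$ (identified with its index set). So the first task is to translate the five families $(1)^2,(1)(2),(1)(3),(2)^2,(2)(3)$ into forbidden-configuration conditions on $I$. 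Partitioning $\{1,\dots,N\}$ into $P=\{1,2,3\}$, $Q=\{4,\dots,k\}$, $R=\{k+1,k+2,k+3\}$, $S=\{k+4,\dots,N\}$ and calling $\{j,k+j\}$ (for $j\in P$) a \emph{matched pair}, one checks that $e_I\in(B)_k^\perp$ if and only if $|I\cap S|\le 1$, and moreover: if $I\cap S\neq\emptyset$ then $I$ contains no matched pair and $|I\cap R|\le 1$; $I$ contains at most one matched pair; and if $R\subseteq I$ then $I\cap P=\emptyset$.

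Next I would count the admissible $I$ by splitting on $c:=|I\cap S|\in\{0,1\}$. For $c=1$ there are $N-k-3$ choices for the unique tail index, after which the conditions collapse to ``no matched pair'' and ``$|I\cap R|\le 1$'' (which make the remaining two vacuous); counting the remaining $k-1$ indices inside $P\cup Q\cup R$ according to $|I\cap R|\in\{0,1\}$ gives $k+3(k-1)=4k-3$ choices, hence a contribution $(N-k-3)(4k-3)$. For $c=0$ one has $I\subseteq P\cup Q\cup R$; parametrizing by $a:=|I\cap P|$ and $b:=|I\cap R|$, the part $I\cap Q$ is an arbitrary $(k-a-b)$-subset of $Q$, contributing a factor $\binom{k-3}{a+b-3}$, and it remains to count the admissible pairs $(A,A^{\ast})$ with $A=I\cap P$, $A^{\ast}=\{j:k+j\in I\cap R\}$, $|A|=a$, $|A^{\ast}|=b$, subject to $|A\cap A^{\ast}|\le 1$ together with the rule that $b=3$ forces $a=0$. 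Summing these counts over $a+b=m$ produces the weights $G(3)=20$, $G(4)=9$, and $G(m)=0$ for $m\ge 5$, so the $c=0$ contribution is $20+9(k-3)$. Adding the two cases and rearranging gives
\[ \dim(B)_k^\perp \;=\; 20+9(k-3)+(N-k-3)(4k-3)\;=\;5+3(N-k-3)(k-1)+6(k-2)+k(N-k), \]
which is the claim.

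The step I expect to be the real work is the $c=0$ bookkeeping: enumerating the pairs $(A,A^{\ast})$ for all $(a,b)\in\{0,1,2,3\}^2$ while making sure that the conditions coming from $(2)^2$ (at most one matched pair) and from $(2)(3)$ (the implication $R\subseteq I\Rightarrow I\cap P=\emptyset$) are each imposed exactly once and that no monomial is double counted, and then checking that the resulting formula remains correct in the small ranges $k=4,5$, where $Q$ is short so that some values of $m=a+b$ are automatically killed by $\binom{k-3}{m-3}=0$ (and the surviving ones with $m\ge 5$ contribute nothing, since $G(m)=0$). Verifying the final formula on an explicit instance such as $\Gr(4,\mathbb C^{10})$, where both sides equal $68$, is a useful consistency check.
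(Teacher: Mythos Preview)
Your proof is correct and follows essentially the same approach as the paper: both reduce the computation to counting monomials $e_I$ whose index set $I$ avoids the forbidden configurations coming from $(1)^2,(1)(2),(1)(3),(2)^2,(2)(3)$. The only difference is organizational: the paper cases on the Hamming distance $d(\omega,e_I)=|I\setminus[k]|$ (obtaining contributions $1,\,k(N-k),\,3(N-k-3)(k-1)+3+6(k-2),\,1,\,0$ for $d=0,1,2,3,\ge4$), whereas you case first on $c=|I\cap S|$ and then on $(a,b)=(|I\cap P|,|I\cap R|)$; the two totals match since $d=b+c$ and the resulting sums regroup to the same expression.
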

\begin{proof}
	\indent Since $(B)_k\subset \bigwedge^k(\mathbb C^N)^\vee$ is spanned by monomials of the form $x_{i_1}\wedge \ldots \wedge x_{i_k}$,  then $(B)_k^\perp$ has to be spanned by monomials of the form $e_{j_1}\wedge \ldots \wedge e_{j_k}$. More precisely, if $(B)_k=\langle x_{i_1}\wedge \ldots \wedge x_{i_k} \ | \ (i_1 \lneq \ldots \lneq i_k) \in I\rangle$ for a certain subset of ordered $k$-tuples  $I\subset \binom{[N]}{k}$, then $(B)_k^\perp=\langle e_{j_1}\wedge \ldots \wedge e_{j_k} \ | \ (j_1 \lneq \ldots \lneq j_k) \notin I\rangle$. 
	We recall that $q_3 \in T_\omega \Gr(k,\mathbb C^N)\cap  \Theta_{3,\Gr(k,\mathbb C^N)}$ where $\omega=e_1\wedge \ldots \wedge e_k$, and that $\omega$ is the only point of tangency for $q_3$,  by tangential-identifiability. We analyze the possible generators $e_{i_1}\wedge \ldots \wedge e_{i_k}$ of $(B)_k^\perp$ as the Hamming distance with respect to $\omega$ varies. We set $d:=d(\omega, e_{i_1}\wedge \ldots \wedge e_{i_k})$.
	\begin{itemize}
		\item[($(i)$] If $d \geq 4$, we may assume $i_1,\ldots,i_{k-d}\in [k]$ and $i_{k-d+1}, \ldots, i_k \in \{k+1,\ldots, N\}$. The conditions imposed by the generators $(1)^2$ imply that there cannot be two or more indices in $\{i_{k-d+1},\ldots, i_k\}\cap \{k+4,\ldots, N\}$. Moreover, if it was $i_k\in \{k+4\, \ldots, N\}$ and $i_{k-d+1},\ldots,i_{k-1}\in \{k+1,k+2,k+3\}$, then it would be $d=4$ leading to contradiction with the conditions imposed by $(1)(3)$. Finally, it cannot be $i_{k-d+1},\ldots, i_{k}\in \{k+1,k+2,k+3\}$ as it would imply $d=3$. Thus there are no generators in $(B)_k^\perp$ having distance at least $4$ from $\omega$.
		\item[$(ii)$] If $d=3$, we may assume $i_1,\ldots, i_{k-3}\in [k]$ and $i_{k-2},i_{k-1},i_k\in \{k+1,\ldots, N\}$. Again, it has to be $\{i_{k-2},i_{k-1},i_k\}\cap \{k+4,\ldots, N\}=\emptyset$ because of the conditions from $(1)^2$ and $(1)(3)$. On the other hand, for $\{i_{k-2},i_{k-1},i_k\}= \{k+1,k+2, k+3\}$ the conditions from $(2)^2$ and $(2)(3)$ impose $\{i_1,\ldots, i_{k-3}\}=[k]\setminus[3]$. Thus there is only one generator in $(B)_k^\perp$ having distance $3$ from $\omega$, namely $e_{4}\wedge \ldots \wedge e_k\wedge e_{k+1}\wedge e_{k+2}\wedge e_{k+3}$.
		\item[$(iii)$] If $d=2$, we may assume $i_1,\ldots, i_{k-2}\in[k]$ and $i_{k-1},i_k\in \{k+1,\ldots , N\}$. Similarly to the above cases, the conditions from $(1)^2$ impose that $\{i_{k-1},i_k\}\nsubseteq\{k+4,\ldots, N\}$. If $i_k\in \{k+4,\ldots, N\}$ and $i_{k-1}\in \{k+1,k+2,k+3\}$, then the conditions from $(1)(2)$ imply $\{i_1,\ldots, i_{k-2}\}\subset [k]\setminus \{i_{k-1}-k\}$, leading to $(N-k-3)\cdot 3 \cdot (k-1)$ generators. Finally, if $\{i_{k-1},i_k\}\subset\{k+1,k+2,k+3\}$, then $(2)^2$ implies that $\{i_{k-1}-k, i_k-k\}\nsubseteq \{i_1,\ldots, i_{k-2}\}$: in particular, the case $\{i_1,\ldots, i_{k-2}\}=[k]\setminus \{i_{k-1}-k,i_k-k\}$ leads to $3\cdot 1$ generators, while the case $|\{i_{k-1}-k,i_k-k\}\cap \{i_1,\ldots, i_{k-2}\}|=1$ leads to $2\cdot 3\cdot (k-2)$ generators.
		\item[$(iv)$] All of the $k(N-k)$ monomials $e_{i_1}\wedge \ldots \wedge e_{i_k}$ of distance $d=1$ from $\omega$ are generators, as by Remark \ref{tangent space:dist} they span $T_{\omega}\Gr(k,\mathbb C^N)$.
		\item[$(v)$] For $d=0$ there trivially is the generator $\omega$ itself.
	\end{itemize}
	Clearly, all of the above generators are linearly independent, hence the thesis follows.
	\end{proof}
	
\begin{prop}\label{prop:dim of ideal perp}
	For $k\geq 4$, the subspace $(\cal I(q_3)^2)_k^\perp$ has dimension $2k(N-k)+2$, that is
	\[ \dim (\cal I(q_3)^2)_k^\perp =\dim \sigma_2(\Gr(k,\mathbb C^N)) +1 \ . \]
	\end{prop}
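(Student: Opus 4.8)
The plan is to leverage Proposition \ref{prop:dim B} and then impose the two remaining families of relations. Since
\[ (\cal I(q_3)^2)_k^\perp \ = \ (B)_k^\perp \ \cap \ \big((1)(4)\big)_k^\perp \ \cap \ \big((2)(4)\big)_k^\perp \ , \]
the strategy is to compute the rank of the linear system that the binomial generators in $(1)(4)$ and $(2)(4)$ impose on the explicit monomial basis of $(B)_k^\perp$ exhibited in the proof of Proposition \ref{prop:dim B}; a routine manipulation of the formula there shows it is enough to prove this rank equals $(N-k-3)(2k-3)+3(k-3)$, because
\[ \dim(B)_k^\perp \ - \ \big[(N-k-3)(2k-3)+3(k-3)\big] \ = \ 2k(N-k)+2 \ = \ \dim\sigma_2(\Gr(k,\mathbb C^N))+1 \ . \]

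For the upper bound $\dim(\cal I(q_3)^2)_k^\perp\leq 2k(N-k)+2$ I would exhibit that many independent relations. Applying the skew-apolarity action of a generator $x_i\wedge(x_ax_{k+b}+x_{a'}x_{k+b'})$ from $(1)(4)$, with $i\in\{k+4,\dots,N\}$ fixed, to a general element $v=\sum_J c_J\,e_J$ of $(B)_k^\perp$ and collecting the homogeneous components of the output, one sees that the only monomial coefficients that can occur are those of the distance-$2$ basis monomials of the first sub-type of case $(iii)$ in the proof of Proposition \ref{prop:dim B} (those containing the index $i$), of which there are $3(k-1)$ for each $i$; each component of the output then gives a relation $c_J=\pm c_{J'}$ between two such monomials. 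For fixed $i$ these relations form a graph on the $3(k-1)$ monomials which, after grouping by the index $c\in[k]$ absent from the ``$[k]$-part'' of the monomial, breaks into three single edges (for $c\in\{1,2,3\}$) and $k-3$ triangles (for $c\in\{4,\dots,k\}$); taking any two edges in each triangle together with the three single edges gives $3+2(k-3)=2k-3$ independent relations for each $i$, hence at least $(N-k-3)(2k-3)$ from $(1)(4)$ since distinct values of $i$ touch disjoint monomials. The three degree-$4$ generators of $(2)(4)$ are handled the same way: they only involve the distance-$2$ basis monomials of the second sub-type of case $(iii)$ (those whose two ``outer'' indices lie in $\{k+1,k+2,k+3\}$), and a short bookkeeping shows that for each of the three generators and each ``spectator'' subset $L\subseteq[k]\setminus\{1,2,3\}$ of size $k-4$ one obtains exactly one relation, and that these $3(k-3)$ relations pair up pairwise distinct monomials, hence are independent. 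Finally, every monomial touched by $(1)(4)$ contains an index $\geq k+4$ while none touched by $(2)(4)$ does, so the two systems have disjoint support and their union has rank at least $(N-k-3)(2k-3)+3(k-3)$, which yields the upper bound.

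The matching lower bound $\dim(\cal I(q_3)^2)_k^\perp\geq 2k(N-k)+2$ is free: by Lemma \ref{lemma:ideal} one has $T_{q_3}\sigma_2(\Gr(k,\mathbb C^N))\subseteq(\cal I(q_3)^2)_k^\perp$, and the affine tangent space of the irreducible variety $\sigma_2(\Gr(k,\mathbb C^N))$ at any point has dimension at least $\dim\sigma_2(\Gr(k,\mathbb C^N))+1=2k(N-k)+2$. Combining the two bounds proves the equality, which is exactly the assertion (and shows along the way that $q_3$ is a smooth point of $\sigma_2(\Gr(k,\mathbb C^N))$).

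I expect the main obstacle to be the faithful enumeration behind the upper bound: one must track precisely which basis monomials of $(B)_k^\perp$ each binomial generator reaches, and in particular verify that the relations never spill into the distance-$1$ or distance-$3$ strata of Proposition \ref{prop:dim B} --- they cannot reach the unique distance-$3$ monomial $e_4\wedge\dots\wedge e_k\wedge e_{k+1}\wedge e_{k+2}\wedge e_{k+3}$, since every monomial they reach has exactly two ``outer'' indices while that one has three, nor the distance-$\leq 1$ monomials, which have at most one ``outer'' index. A pleasant feature of the argument is that the usual nuisance of pinning down signs in the skew-apolarity action --- which would be needed, for instance, to decide whether the three relations around a triangle in the $(1)(4)$-graph have rank $2$ or $3$ --- is entirely bypassed, because the general lower bound coming from Lemma \ref{lemma:ideal} forces the rank of the whole system to be as small as the combinatorial count allows.
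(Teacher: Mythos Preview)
Your proposal is correct and follows essentially the same route as the paper: both cut $(B)_k^\perp$ (from Proposition \ref{prop:dim B}) by the relations coming from $(1)(4)$ and $(2)(4)$, and both arrive at the same count $(N-k-3)(2k-3)+3(k-3)$ of independent conditions. The one genuine difference is that the paper computes the \emph{exact} rank of the relation system directly --- it enumerates all degree-$k$ generators of $((1)(4))_k$ and $((2)(4))_k$ modulo $(B)_k$ and then verifies that the $(N-k-3)(k-3)$ syzygies among the $(1)(4)$-generators are the \emph{only} linear dependencies --- whereas you only exhibit a lower bound on the rank (your graph argument with three single edges and $k-3$ triangles per index $i$) and then invoke Lemma \ref{lemma:ideal} together with $\dim T_{q_3}\sigma_2\geq \dim\sigma_2+1$ to force equality. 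This is a legitimate and slightly more economical variant: it spares you the check that no further dependencies exist and, as you observe, makes the sign bookkeeping in the skew-apolarity action irrelevant. Your graph description of the $(1)(4)$-relations (parametrizing the distance-$2$ monomials containing $e_i$ by pairs $(j,c)$ with $j\in\{1,2,3\}$ and $c\in[k]\setminus\{j\}$) is equivalent to the paper's enumeration but arguably more transparent.
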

\begin{proof}
	In order to compute the dimension of $(\cal I(q_3)^2)_k^\perp=\big((1)(4),(2)(4)\big)_k^\perp \cap (B)_k^\perp$, we cut $(B)_k^\perp$ by the relations in $\big((1)(4),(2)(4)\big)_k$. Thus we determine the generators from $\big((1)(4),(2)(4)\big)_k$ which are linearly independent from $(B)_k^\perp$. Notice that any generator in $(2)(4)$ multiplied by $x_i$ for $i=k+4,\ldots, N$ lies in the ideal generated by $(1)(4)$.\\
	\hfill\break
	\indent Let us start from the relations in $((2)(4))_k$. By symmetry, we may consider the element $x_1x_{k+1}(x_2x_{k+3}+x_3x_{k+2}) \in (2)(4)$. The generators in degree $k$ coming from the above element are obtained by multiplying it with monomials $x_{i_1}\wedge \ldots \wedge x_{i_{k-4}} \in \bigwedge^{k-4}(\mathbb C^N)^\vee$. Clearly, $\{i_1,\ldots, i_{k-4}\}$ cannot contain $1,k+1$, otherwise the multiplication goes to zero. On the other hand, it cannot contain $2,3,k+2,\ldots , N$, otherwise the multiplication would give linear combinations of elements of $B$. Thus the generators in degree $k$ coming from $x_1x_{k+1}(x_2x_{k+3}+x_3x_{k+2})$ are obtained from the $k-3$ monomials with indices $\{i_1,\ldots, i_{k-4}\}\in \binom{[k]\setminus[3]}{k-4}$. By symmetry, the same holds for the other two generators in $(2)(4)$. Moreover, a direct computation shows that the generators of $((2)(4))_k$ obtained as above
	\[ x_1x_{k+1}(x_2x_{k+3}+x_3x_{k+2})\wedge x_{i_1}\cdots x_{i_{k-4}} \ \ \ \forall \{i_1,\ldots, i_{k-4}\}\in \binom{[k]\setminus [3]}{k-4}\]
	\[ x_2x_{k+2}(x_1x_{k+3}+x_3x_{k+1})\wedge x_{j_1}\cdots x_{j_{k-4}} \ \ \ \forall \{j_1,\ldots, j_{k-4}\}\in \binom{[k]\setminus [3]}{k-4}\]
	\[ x_3x_{k+3}(x_2x_{k+1}+x_1x_{k+2})\wedge x_{s_1}\cdots x_{s_{k-4}} \ \ \ \forall \{s_1,\ldots, s_{k-4}\}\in \binom{[k]\setminus [3]}{k-4}\]
	are linearly independent. It follows that $((2)(4))_k$ imposes $3(k-3)$ conditions on $(B)_k^\perp$.\\
	\hfill\break 
	\indent Finally, we focus on the relations from $((1)(4))_k$. By symmetry, we consider the set of elements $x_i(x_1x_{k+2}+x_2x_{k+1})\in (1)(4)$ for $i\in \{k+4,\ldots, N\}$ and we multiply it with a monomial $x_{i_1}\wedge \ldots \wedge x_{i_{k-3}}$. Similarly to the previous argument, the set of indices $\{i_1,\ldots, i_{k-3}\}$ cannot contain $1,2,k+1,\ldots, N$ otherwise we would get linear combinations of elements of $B$. However, in this case one can have $3 \in \{i_1,\ldots, i_{k-3}\}$, thus we get $(N-k-3)(k-2)$ generators of $((1)(4))_k$ from $\{ x_i(x_1x_{k+2}+x_2x_{k+1}) \ | \ i=k+4:N\}$ by multiplying it with the $k-2$ monomials indexed by $\{i_1,\ldots, i_{k-3}\}\subset \{3,\ldots, k\}$. Analogously, the remaining sets of elements $\{ x_j(x_1x_{k+3}+x_3x_{k+1}) \ | \ j=k+4:N\}$ and $\{ x_s(x_3x_{k+2}+x_2x_{k+3}) \ | \ s=k+4:N\}$ in $(1)(4)$ give $(N-k-3)(k-2)$ generators each, after multiplying with monomials indexed by $\{j_1,\ldots, j_{k-3}\}\subset \{2,4,\ldots, k\}$ and $\{s_1,\ldots, s_{k-3}\}\subset \{1,4,\ldots, k\}$ respectively. Again, one has to check possible linear combinations among the above $3(N-k-3)(k-2)$ generators: a direct computation shows that the only possible linear combinations are of the form
	\begin{align*}
		x_i(x_1x_{k+2}+x_2x_{k+1})\wedge x_3x_{i_2}\cdots x_{i_{k-3}} & + x_i(x_1x_{k+3}+x_3x_{k+1})\wedge x_2x_{i_2}\cdots x_{i_{k-3}} +\\
		& + x_i(x_3x_{k+2}+x_2x_{k+3})\wedge x_1x_{i_2}\cdots x_{i_{k-3}} &=0 
		\end{align*}
	as $i\in \{k+4,\ldots, N\}$ and $\{i_2,\ldots, i_{k-3}\}\in \binom{[k]\setminus[3]}{k-4}$ vary. It follows that $((1)(4))_k$ imposes $3(N-k-3)(k-2)-(N-k-3)(k-3)$ conditions on $(B)_k^\perp$.\\
	\hfill\break
	\indent We conclude that the dimension of $(\cal I(q_3)^2)_k^\perp$ is 
	\begin{align*}
		\dim (\cal I(q_3)^2)_k^\perp & = \dim \bigg( (B)_k^\perp \cap \big((1)(4),(2)(4)\big)_k^\perp \bigg)\\
		& =  \big[5 + 3(N-k-3)(k-1) + 6(k-2) + k(N-k)\big] - 3(k-3) \\
		& \ \ \ \ - \big[ 3(N-k-3)(k-2)-(N-k-3)(k-3) \big]\\
		& = 2k(N-k)+2 \ .
		\end{align*}
	\end{proof}

\begin{teo}\label{thm:singular locus}
	For any $3\leq k \leq \frac{N}{2}$ and $N\geq 7$, the singular locus of the secant variety of lines to the Grassmannian $\Gr(k,\mathbb C^N)$ coincides with the closure of the distance-$2$ orbit, i.e.
	\[ \Sing\left(\sigma_2(\Gr(k,\mathbb C^N))\right)=\overline{\Sigma_{2,\Gr(k,\mathbb C^N)}} \ .\]
\end{teo}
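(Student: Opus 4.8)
The plan is to prove the two inclusions separately; almost all of the real work has already been done in the preceding lemmas and propositions, so the argument is essentially an assembly of those results.

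For the inclusion $\overline{\Sigma_{2,\Gr(k,\mathbb C^N)}}\subseteq \Sing(\sigma_2(\Gr(k,\mathbb C^N)))$ I would argue as follows. Since $N\geq 7$, the irreducible variety $\sigma_2(\Gr(k,\mathbb C^N))$ is non-degenerate and, as recalled above, does not fill up the ambient space, hence it is not a linear space; the classical inclusion $\sigma_{r-1}(X)\subseteq \Sing(\sigma_r(X))$ then gives $\Gr(k,\mathbb C^N)=\sigma_1(\Gr(k,\mathbb C^N))\subseteq \Sing(\sigma_2(\Gr(k,\mathbb C^N)))$, while Lemma \ref{lemma:distance-2 is singular} yields $\Sigma_{2,\Gr(k,\mathbb C^N)}\subseteq \Sing(\sigma_2(\Gr(k,\mathbb C^N)))$. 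By the orbit stratification of Theorem \ref{thm:orbit stratification} we have $\overline{\Sigma_{2,\Gr(k,\mathbb C^N)}}=\Gr(k,\mathbb C^N)\sqcup \Sigma_{2,\Gr(k,\mathbb C^N)}$, and since $\Sing(\sigma_2(\Gr(k,\mathbb C^N)))$ is Zariski-closed the inclusion follows.

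For the reverse inclusion $\Sing(\sigma_2(\Gr(k,\mathbb C^N)))\subseteq \overline{\Sigma_{2,\Gr(k,\mathbb C^N)}}$ I would invoke Remark \ref{rmk:smoothness of theta3 is enough}: it suffices to show that the distance-$3$ tangent orbit $\Theta_{3,\Gr(k,\mathbb C^N)}$ consists of smooth points of $\sigma_2(\Gr(k,\mathbb C^N))$, and since it is an $\SL(N)$-orbit it is enough to check this at the representative $q_3$. By Lemma \ref{lemma:ideal} the affine tangent space satisfies $T_{q_3}\sigma_2(\Gr(k,\mathbb C^N))\subseteq (\cal I(q_3)^2)_k^\perp$, and Proposition \ref{prop:sing k=3} (in the case $k=3$) together with Proposition \ref{prop:dim of ideal perp} (in the case $k\geq 4$) gives $\dim (\cal I(q_3)^2)_k^\perp=\dim\sigma_2(\Gr(k,\mathbb C^N))+1$. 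Hence $\dim T_{q_3}\sigma_2(\Gr(k,\mathbb C^N))\leq \dim\sigma_2(\Gr(k,\mathbb C^N))+1$; since the reverse inequality always holds for the affine tangent space, equality is forced and $q_3$ is a smooth point of $\sigma_2(\Gr(k,\mathbb C^N))$. It follows that no orbit of Hamming distance $\geq 3$ can be contained in the closed set $\Sing(\sigma_2(\Gr(k,\mathbb C^N)))$ --- otherwise its closure, which by the stratification contains $\Theta_{3,\Gr(k,\mathbb C^N)}$, would be contained in $\Sing(\sigma_2(\Gr(k,\mathbb C^N)))$, contradicting the smoothness of $q_3$ --- so $\Sing(\sigma_2(\Gr(k,\mathbb C^N)))\subseteq \Gr(k,\mathbb C^N)\sqcup \Sigma_{2,\Gr(k,\mathbb C^N)}=\overline{\Sigma_{2,\Gr(k,\mathbb C^N)}}$, which is the desired inclusion.

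The main obstacle is therefore not in this final assembly but in the explicit determination of the squared skew-apolar ideal $\cal I(q_3)^2$ and the resulting dimension count for $(\cal I(q_3)^2)_k^\perp$, already carried out in Propositions \ref{prop:dim B}, \ref{prop:sing k=3} and \ref{prop:dim of ideal perp}. It is worth emphasizing that Lemma \ref{lemma:ideal} only provides an \emph{upper} bound on $T_{q_3}\sigma_2(\Gr(k,\mathbb C^N))$, so it is precisely the coincidence of that bound with $\dim\sigma_2(\Gr(k,\mathbb C^N))+1$ --- equivalently, the fact that $q_3$ already ``sees'' exactly the expected number of independent tangent directions --- that forces smoothness; the quantitative content of those propositions cannot be circumvented.
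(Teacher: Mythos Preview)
Your proposal is correct and follows essentially the same route as the paper's own proof: the inclusion $\overline{\Sigma_{2,\Gr(k,\mathbb C^N)}}\subseteq \Sing(\sigma_2(\Gr(k,\mathbb C^N)))$ is obtained from Lemma \ref{lemma:distance-2 is singular} together with closedness of the singular locus, and the reverse inclusion is reduced via Remark \ref{rmk:smoothness of theta3 is enough} to the smoothness of $q_3$, which is forced by sandwiching $\dim T_{q_3}\sigma_2(\Gr(k,\mathbb C^N))$ between $\dim\sigma_2(\Gr(k,\mathbb C^N))+1$ and $\dim(\cal I(q_3)^2)_k^\perp$ using Lemma \ref{lemma:ideal} and Propositions \ref{prop:sing k=3}--\ref{prop:dim of ideal perp}. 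Your extra remark that $\Gr(k,\mathbb C^N)\subset \Sing(\sigma_2(\Gr(k,\mathbb C^N)))$ is harmless but redundant, since $\Sigma_{2,\Gr(k,\mathbb C^N)}\subset \Sing$ and closedness of $\Sing$ already force $\overline{\Sigma_{2,\Gr(k,\mathbb C^N)}}\subset \Sing$.
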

\begin{proof}
	From Lemma \ref{lemma:distance-2 is singular} we already know that the inclusion $\overline{\Sigma_{2,\Gr(k,\mathbb C^N)}}\subset \Sing(\sigma_2(\Gr(k,\mathbb C^N)))$ holds. From Remark \ref{rmk:smoothness of theta3 is enough} it is enough to prove the smoothness of $\Theta_{3,\Gr(k,\mathbb C^N)}$ for deducing the smoothness of all the remaining orbits. Moreover, by homogeneity it is enough to check the smoothness of a representative of $\Theta_{3,\Gr(k,\mathbb C^N)}$, say $q_3$. Finally, from Lemma \ref{lemma:ideal}, Proposition \ref{prop:sing k=3} (for $k=3$) and Proposition \ref{prop:dim of ideal perp} (for $k\geq 4$) we get the chain of inequalities
	\[ 2k(N-k)+2 \leq \dim T_{q_3}\sigma_2(\Gr(k,\mathbb C^N)) \leq \dim (\cal I(q_3)^2)_k^\perp = 2k(N-k)+2 \ , \]
	leading to $\dim T_{q_3}\sigma_2(\Gr(k,\mathbb C^N))=\dim \sigma_2(\Gr(k,\mathbb C^N))+1$, hence the point $q_3$ is smooth in the secant variety.
	\end{proof}

\begin{obs*}
	Our Theorem \ref{thm:singular locus} corrects a previous statement in \cite[before Figure 1]{abo2009non} in which the authors states that $\Sing(\sigma_2(\Gr(3,\mathbb C^7))) = \Gr(3,\mathbb C^7)$.
	\end{obs*}

\printbibliography

@article{galgano2023spinor,
	title={Identifiability and singular locus of secant varieties to spinor varieties},
	author={Galgano, V.},
	journal={preprint arXiv:2302.05295},
	year={2023}
}

@article{chiantiniciliberto,
	title={On the dimension of secant varieties},
	author={Chiantini, L. and Ciliberto, C.},
	journal={Journal of the European Mathematical Society},
	volume={12},
	number={5},
	pages={1267--1291},
	year={2010}
}

@book{harris2013algebraic,
	title={Algebraic geometry: a first course},
	author={Harris, J.},
	volume={133},
	year={2013},
	publisher={Springer Science \& Business Media}
}

@article{vermeire2001some,
	title={Some results on secant varieties leading to a geometric flip construction},
	author={Vermeire, P.},
	journal={Compositio Mathematica},
	volume={125},
	number={3},
	pages={263--282},
	year={2001},
	publisher={London Mathematical Society}
}

@article{ullery,
	title={On the normality of secant varieties},
	author={Ullery, B.},
	journal={Advances in Mathematics},
	volume={288},
	pages={631--647},
	year={2016},
	publisher={Elsevier}
}

@article{kangjin2018,
	title={On singularities of third secant varieties of Veronese embeddings},
	author={Han, K.},
	journal={Linear Algebra and its Applications},
	volume={544},
	pages={391--406},
	year={2018},
	publisher={Elsevier}
}

@article{kangjin2021,
	title={On the singular loci of higher secant varieties of Veronese embeddings},
	author={Furukawa, K. and Han, K.},
	journal={preprint arXiv:2111.03254},
	year={2021}
}

@article{kanev1999chordal,
	title={Chordal varieties of Veronese varieties and catalecticant matrices},
	author={Kanev, V.},
	journal={Journal of Mathematical Sciences},
	volume={94},
	number={1},
	pages={1114--1125},
	year={1999},
	publisher={Springer}
}

@article{manivelmichalek,
	title={Secants of minuscule and cominuscule minimal orbits},
	author={Manivel, L. and Micha{\l}ek, M.},
	journal={Linear Algebra and its Applications},
	volume={481},
	pages={288--312},
	year={2015},
	publisher={Elsevier}
}

@article{abo2009non,
  title={Non-defectivity of Grassmannians of planes},
  author={Abo, H. and Ottaviani, G. and Peterson, C.},
  journal={Journal of Algebraic Geometry},
  volume={21},
  number={1},
  pages={1--20},
  year={2009}
}

@article{LM07,
	title={Legendrian varieties},
	author={Landsberg, J.M and Manivel, L.},
	journal={Asian Journal of Mathematics},
	volume={11},
	number={3},
	pages={341--360},
	year={2007},
	publisher={International Press of Boston}
}

@article{lim2021tensors,
  title={Tensors in computations},
  author={Lim, L-H},
  journal={Acta Numerica},
  volume={30},
  pages={555--764},
  year={2021},
  publisher={Cambridge University Press}
}

@article{ottaviani2020tensor,
  title={Tensor rank and complexity},
  author={Ottaviani, G. and Reichenbach, P.},
  journal={to appear in EMS Book Series "Series of Congress Reports", arXiv:2004.01492},
  year={2020}
}

@article{BidlemanOeding,
  title={Restricted Secant Varieties of Grassmannians},
  author={Bidleman, D. and Oeding, L.},
	journal={Collectanea Mathematica},
	year={2023},
	publisher={Springer}
}

@article{bernardi2018hitchhiker,
  title={The hitchhiker guide to: Secant varieties and tensor decomposition},
  author={Bernardi, A. and Carlini, E. and Catalisano, M.V. and Gimigliano, A. and Oneto, A.},
  journal={Mathematics},
  volume={6},
  number={12},
  pages={314},
  year={2018},
  publisher={MDPI}
}

@article{bernardi2012algebraic,
  title={Algebraic geometry tools for the study of entanglement: an application to spin squeezed states},
  author={Bernardi, A. and Carusotto, I.},
  journal={Journal of Physics A: Mathematical and Theoretical},
  volume={45},
  number={10},
  pages={105304},
  year={2012},
  publisher={IOP Publishing}
}

@article{michalek2015secant,
  title={Secant cumulants and toric geometry},
  author={Micha{\l}ek, M. and Oeding, L. and Zwiernik, P.},
  journal={International Mathematics Research Notices},
  volume={2015},
  number={12},
  pages={4019--4063},
  year={2015},
  publisher={Oxford University Press}
}

@article{baur2007secant,
  title={Secant dimensions of minimal orbits: computations and conjectures},
  author={Baur, K. and Draisma, J. and De Graaf, W.},
  journal={Experimental Mathematics},
  volume={16},
  number={2},
  pages={239--250},
  year={2007},
  publisher={Taylor \& Francis}
}

@article{comon2014tensors,
  title={Tensors: a brief introduction},
  author={Comon, P.},
  journal={IEEE Signal Processing Magazine},
  volume={31},
  number={3},
  pages={44--53},
  year={2014},
  publisher={IEEE}
}

@article{ABMM21,
	title={Skew-symmetric tensor decomposition},
	author={Arrondo, E. and Bernardi, A. and Marques, P.M. and Mourrain, B.},
	journal={Communications in Contemporary Mathematics},
	volume={23},
	number={02},
	pages={1950061},
	year={2021},
	publisher={World Scientific}
}

@article{ballico2020terracini,
  title={Terracini locus for three points on a Segre variety},
  author={Ballico, E. and Bernardi, A. and Santarsiero, P.},
  journal={arXiv preprint arXiv:2012.00574},
  year={2020}
}

@article{ballico2021terracini,
  title={On the Terracini locus of projective varieties},
  author={Ballico, E. and Chiantini, L.},
  journal={Milan Journal of Mathematics},
  volume={89},
  number={1},
  pages={1--17},
  year={2021},
  publisher={Springer}
}

@article{bernardi2018new,
  title={A new class of non-identifiable skew-symmetric tensors},
  author={Bernardi, A. and Vanzo, D.},
  journal={Annali di Matematica Pura ed Applicata (1923-)},
  volume={197},
  number={5},
  pages={1499--1510},
  year={2018},
  publisher={Springer}
}

@article{boralevi2013note,
  title={A note on secants of Grassmannians},
  author={Boralevi, A.},
  journal={Rend. Istit. Mat. Univ. Trieste},
  volume={45},
  pages={67--72},
  year={2013}
}

@article{laface2013secant,
  title={Secant varieties of Segre-Veronese embeddings of $(\mathbb P^1)^r$},
  author={Laface, A. and Postinghel, E.},
  journal={Mathematische Annalen},
  volume={356},
  number={4},
  pages={1455--1470},
  year={2013},
  publisher={Springer}
}

@article{catalisano2005secant,
  title={Secant varieties of Grassmann varieties},
  author={Catalisano, M. and Geramita, A. and Gimigliano, A.},
  journal={Proceedings of the american mathematical society},
  volume={133},
  number={3},
  pages={633--642},
  year={2005}
}

@article{galuppi2022secant,
  title={Secant non-defectivity via collisions of fat points},
  author={Galuppi, F. and Oneto, A.},
  journal={Advances in Mathematics},
  volume={409},
  pages={108657},
  year={2022},
  publisher={Elsevier}
}

@book {landsberg,
	AUTHOR = {Landsberg, J.M},
	TITLE = {Tensors: geometry and applications},
	SERIES = {Graduate Studies in Mathematics},
	VOLUME = {128},
	PUBLISHER = {American Mathematical Society, Providence, RI},
	YEAR = {2012},
	MRCLASS = {15-01 (14N05 15A69 20G05)},
	MRNUMBER = {2865915},
}

@book{FH91,
  author    = {Fulton, W. and Harris, J.},
  publisher = {Springer Science \& Business Media},
  title     = {Representation theory: a first course},
  year      = {2013},
  volume    = {129},
}

@book{zak1993tangents,
  title={Tangents and secants of algebraic varieties},
  author={Zak, F.L.},
  volume={127},
  year={1993},
  publisher={American Mathematical Soc.}
}

@book{russo2016,
	title={On the Geometry of Some Special Projective Varieties},
	author={Russo, F.},
	year={2016},
	publisher={Springer International Publishing}
}

\end{document}